\pdfoutput=1 
\documentclass[reqno,11pt]{amsart}
\usepackage[utf8]{inputenc}
\usepackage[dvipsnames]{xcolor}
\usepackage{amssymb,amsmath,amsthm, amsbsy,
multicol, mathtools, 
microtype, 
tikz,
enumerate
}

\allowdisplaybreaks 

\usepackage[margin=1.2in]{geometry}
\usepackage{ytableau}
\usepackage[vcentermath,enableskew]{youngtab}

\usepackage[colorlinks=true, pdfstartview=FitV, 
linkcolor=violet, 
citecolor=blue,
filecolor=violet, urlcolor=violet]{hyperref}

\newcommand{\arxiv}[1]{\href{http://arxiv.org/abs/#1}{\texttt{arXiv:#1}}}

\theoremstyle{definition} 
\newtheorem{theorem}{Theorem}[section]
\newtheorem{lemma}[theorem]{Lemma}
\newtheorem{conjecture}[theorem]{Conjecture}
\newtheorem{proposition}[theorem]{Proposition}
\newtheorem{corollary}[theorem]{Corollary}

\newtheorem{definition}[theorem]{Definition}
\newtheorem{example}[theorem]{Example}
\newtheorem{remark}[theorem]{Remark}
\numberwithin{equation}{section}
\counterwithout{figure}{section}

\newtheorem{thmIntro}{Theorem}

\definecolor{forGreen}{RGB}{0,120,0}

\DeclareMathOperator{\sh}{sh}
\DeclareMathOperator{\SD}{SD}
\DeclareMathOperator{\BB}{BB}
\DeclareMathOperator{\ID}{ID}
\DeclareMathOperator{\RSK}{RS}

\DeclareMathOperator{\des}{des}
\DeclareMathOperator{\CA}{CA}
\DeclareMathOperator{\incr}{i}
\DeclareMathOperator{\decr}{d}
\DeclareMathOperator{\localincr}{I}
\DeclareMathOperator{\localdecr}{D}
\renewcommand{\P}{\text{P}}
\DeclareMathOperator{\Q}{Q}
\DeclareMathOperator{\len}{len}
\newcommand{\Rowone}{{\sf{{Row}}}_1}
\renewcommand{\omega}{w}
\DeclareMathOperator{\SST}{sst}

\keywords{Box-ball systems, soliton cellular automata, Young tableaux, Robinson--Schensted--Knuth correspondence, dual Knuth equivalence, Greene's theorem, Schensted's theorem} 

\subjclass[2020]{05A05, 05A17, 37B15}


\title[BBS and RSK recording tableaux]{Box-ball systems and RSK recording tableaux}
\author[MC, OF, EG, MS, DZ]{
Marisa Cofie, Olivia Fugikawa, Emily Gunawan, Madelyn Stewart, and David Zeng}

\begin{document}

\begin{abstract}
A box-ball system (BBS) is a discrete dynamical system consisting of $n$ balls in an infinite strip of boxes. During each BBS move, the balls take turns jumping to the first empty box, beginning with the smallest-numbered ball. The one-line notation of a permutation can be used to define a BBS state. This paper proves that the Robinson--Schensted (RS) recording tableau of a permutation completely determines the dynamics of the box-ball system containing the permutation.

Every box-ball system eventually reaches steady state, decomposing into solitons. We prove that the rightmost soliton is equal to the first row of the RS insertion tableau and it is formed after at most one BBS move. This fact helps us compute the number of BBS moves required to form the rest of the solitons. First, we prove that if a permutation has an L-shaped soliton decomposition then it reaches steady state after at most one BBS move. Permutations with L-shaped soliton decompositions include noncrossing involutions and column reading words. Second, we  make partial progress on the conjecture that every permutation on $n$ objects reaches steady state after at most $n-3$ BBS moves. Furthermore, we study the permutations whose soliton decompositions coincide with standard tableaux; we conjecture that they are closed under consecutive pattern containment and that the RS recording tableaux belonging to such permutations are counted by the Motzkin numbers.
\end{abstract}

\maketitle

\section{Introduction}
\subsection{Box-ball systems}

\begin{figure}[htb!]
\begin{align*}
t=0 \text{\hspace{5mm}}
\begin{ytableau}
    \none[\cdots] & 
    &&
    \textcolor{red}{4} &
    \textcolor{forGreen}{5}& 
    \textcolor{orange}{2} &
    \textcolor{blue}{3}&
    \textcolor{magenta}{6} &
    \textcolor{purple}{1}&
    &
    &
    &&\none[\cdots]
    \end{ytableau}\\
    \begin{ytableau}
    \none[\cdots] & 
    &&
    \textcolor{red}{4} &
    \textcolor{forGreen}{5}& 
    \textcolor{orange}{2}&
    \textcolor{blue}{3}&
    \textcolor{magenta}{6} &
    &
    \textcolor{purple}{\bf 1}&
    &
    &&\none[\cdots]
    \end{ytableau}\\
    \begin{ytableau}
    \none[\cdots] & 
    &&
    \textcolor{red}{4} &
    \textcolor{forGreen}{5}&
    &
    \textcolor{blue}{3}&
    \textcolor{magenta}{6} &
    \textcolor{orange}{\bf 2}&
    \textcolor{purple}{1}&
    &
    &&\none[\cdots]
    \end{ytableau}\\
    \begin{ytableau}
    \none[\cdots] & 
    &&
    \textcolor{red}{4} &
    \textcolor{forGreen}{5}&
    &
    &
    \textcolor{magenta}{6} &
    \textcolor{orange}{2}&
    \textcolor{purple}{1}&
    \textcolor{blue}{\bf 3}&
    &&\none[\cdots]
    \end{ytableau}\\
    \begin{ytableau}
    \none[\cdots] & 
    &&
    \text{} &
    \textcolor{forGreen}{5}&
    \textcolor{red}{\bf 4} &
    &
    \textcolor{magenta}{6} &
    \textcolor{orange}{2}&
    \textcolor{purple}{1}&
    \textcolor{blue}{3}&
    &&\none[\cdots]
    \end{ytableau}\\
    \begin{ytableau}
    \none[\cdots] & 
    &&
    \text{} &
    \text{} &
    \textcolor{red}{4} &
    \textcolor{forGreen}{\bf 5}&
    \textcolor{magenta}{6} &
    \textcolor{orange}{2}&
    \textcolor{purple}{1}&
    \textcolor{blue}{3}&
    &&\none[\cdots]
    \end{ytableau}\\
    t=1 \text{\hspace{5mm}}
    \begin{ytableau}
    \none[\cdots] & 
    &&
     \text{} &
     \text{} &
    \textcolor{red}{4} &
    \textcolor{forGreen}{5}&
    &
    \textcolor{orange}{2}&
    \textcolor{purple}{1}&
    \textcolor{blue}{3}&
    \textcolor{magenta}{\bf 6} 
    &&\none[\cdots]
    \end{ytableau}
    \end{align*}
\caption{One BBS move from $\dots ee452361eeee\dots$ to  $ \dots eeee45e2136e \dots$}    
\label{fig:intro:452361:t=0 to t=1}  
\end{figure}

The \emph{box-ball system}, 
or BBS for short, 
is a dynamical system consisting of discrete time states. 
At each time state, we have 
finitely many 
numbered balls in an infinite strip of boxes;
the boxes are indexed by the integers from left to right, and 
each box can fit at most one ball.  
One BBS move is the process of letting each ball jump  to the nearest empty box to its right, starting with the smallest-numbered ball (see Figure~\ref{fig:intro:452361:t=0 to t=1}). 
Given a BBS state at time $t$, we compute the BBS state at time $t+1$ by applying one BBS move.

Let $S_n$ denote the set of permutations on $[n] \coloneqq \{ 1, 2, \dots, n\}$. 
A permutation $w$ in $S_n$ gives a box-ball system state by assigning the one-line notation of the permutation to 
$n$ 
consecutive
boxes. 
We denote an empty box by $e \coloneqq n+1$, 
and we usually omit the infinitely many empty boxes to the left of the balls (even though our boxes are indexed by $\mathbb{Z}$). 
Let $\BB^t(X)$ denote the result of applying $t$ BBS moves to a BBS configuration $X$. 
For example, 
beginning with a configuration 
\[\BB^0(X) = 452361eeeeeeeee\cdots\]
at time $t=0$, 
one BBS move (in which all balls jump once, starting with ball $1$ and ending with ball $6$) results in the new configuration (at $t=1$) 
\[\BB^1(X) = ee45e2136eeeeee\cdots.\] 
A second BBS move produces the $(t=2)$ configuration \[\BB^2(X) = eeee452ee136eee\cdots,\] and a third BBS move produces the $(t=3)$ configuration \[\BB^3(X) = eeeeee425eee136\cdots.\] 
At every subsequent time step, the three balls $136$ advance three spaces to the right, the pair $25$ advances two spaces to the right, and the singleton $4$  advances one space to the right. See Figure~\ref{fig:intro:452361:t=0 to t=4}. 
These blocks 
are called \emph{solitons} --- maximal consecutive increasing sequences 
of balls 
that are preserved by all future BBS moves. 
The configurations where $t \geq 3$ are said to be in \emph{steady state}, because each ball is contained in a soliton. 
The \emph{steady-state time} of this permutation (the number of BBS moves required to reach steady state) is $t=3$.

\begin{figure}[hbt]
\begin{align*}
t=0 \text{\hspace{5mm}}
\begin{ytableau}
\textcolor{red}{\bf 4} &
\textcolor{forGreen}{\bf 5}& 
\textcolor{orange}{\bf 2} &
\textcolor{blue}{\bf 3}&
\textcolor{magenta}{\bf 6} &
\textcolor{purple}{\bf 1}&
    &
    &
    &
    &
    &
    &
    &
    &
    &
    &
    &
    &
    &
    &
    & \none[\cdots] 
    \end{ytableau}\\
    t=1 \text{\hspace{5mm}}
    \begin{ytableau}
    \text{} &
    \text{} &
    \textcolor{red}{\bf 4} &
    \textcolor{forGreen}{\bf 5}& 
    \text{} &
    \textcolor{orange}{\bf 2} &
\textcolor{purple}{\bf 1}&
\textcolor{blue}{\bf 3}&
\textcolor{magenta}{\bf 6}&
    &
    &
    &
    &
    &
    &
    &
    &
    &
    &
    &
    & \none[\cdots] 
    \end{ytableau}\\
    t=2 \text{\hspace{5mm}}
    \begin{ytableau}
    \text{} &
    \text{} &
    \text{} &
    \text{} &
    \textcolor{red}{\bf 4} &
    \textcolor{forGreen}{\bf 5}& 
    \textcolor{orange}{\bf 2} &
    \text{} &
    \text{} &
\textcolor{purple}{\bf 1}&
\textcolor{blue}{\bf 3}&
\textcolor{magenta}{\bf 6}&
    &
    &
    &
    &
    &
    &
    &
    &
    & \none[\cdots] 
    \end{ytableau}\\
    t=3 \text{\hspace{5mm}}
    \begin{ytableau}
    \text{} &
    \text{} &
    \text{} &
    \text{} &
    \text{} &
    \text{} &
    \textcolor{red}{\bf 4} &
    \textcolor{orange}{\bf 2} &
    \textcolor{forGreen}{\bf 5}& 
    \text{} &
    \text{} &
    \text{} &
\textcolor{purple}{\bf 1}&
\textcolor{blue}{\bf 3}&
\textcolor{magenta}{\bf 6}&
    &
    &
    &
    &
    &
    & \none[\cdots] 
    \end{ytableau}\\
    t=4 \text{\hspace{5mm}}
    \begin{ytableau}
    \text{} &
    \text{} &
    \text{} &
    \text{} &
    \text{} &
    \text{} &
    \text{} &    
    \textcolor{red}{\bf 4} &
    \text{} &
    \textcolor{orange}{\bf 2} &
    \textcolor{forGreen}{\bf 5}& 
    \text{} &
    \text{} &
    \text{} &
    \text{} &
\textcolor{purple}{\bf 1}&
\textcolor{blue}{\bf 3}&
\textcolor{magenta}{\bf 6}&
    &
    &
    & \none[\cdots] 
    \end{ytableau}\\
    t=5 \text{\hspace{5mm}}
    \begin{ytableau}
    \text{} &
    \text{} &
    \text{} &
    \text{} &
    \text{} &
    \text{} &
    \text{} &    
    \text{} &    
    \textcolor{red}{\bf 4} &
    \text{} &    
    \text{} &    
    \textcolor{orange}{\bf 2} &
    \textcolor{forGreen}{\bf 5}& 
    \text{} &
    \text{} &    
    \text{} &    
    \text{} &
    \text{} &
\textcolor{purple}{\bf 1}&
\textcolor{blue}{\bf 3}&
\textcolor{magenta}{\bf 6} 
& \none[\cdots] 
    \end{ytableau}
\end{align*}
\caption{A box-ball system with the permutation $452361$  at $t=0$}
\label{fig:intro:452361:t=0 to t=4}
\end{figure}

Every box-ball system eventually reaches steady state,  
decomposing into solitons whose sizes are weakly increasing from left to right, 
i.e., forming an integer partition. 
We can encode this \emph{soliton decomposition} of the box-ball system in a  tableau whose first row is the rightmost soliton,
the second row is the second rightmost soliton, and so on. 
Note that each row of this tableau is necessarily an increasing sequence, but the columns do not have to be increasing. 
The shape of the soliton decomposition is called the \emph{BBS soliton partition}.

Given a permutation $\omega$, its soliton decomposition $\SD(\omega)$ is the soliton decomposition of the box-ball system containing $\omega$. 
For example, the soliton decomposition of the permutation $\omega = 452361$ is 
\[ \SD(\omega) = \begin{ytableau}
\textcolor{purple}{\bf 1}&
\textcolor{blue}{\bf 3}&
\textcolor{magenta}{\bf 6}\\
\textcolor{orange}{\bf 2} & \textcolor{forGreen}{\bf 5} \\
\textcolor{red}{\bf 4}
\end{ytableau}\]

Our version of the box-ball system is know as the \emph{multicolor box-ball system}. It was introduced by Takahashi in \cite{Takahashi93} and is a generalization of the box-ball system (with unlabeled balls) first invented by Takahashi and Satsuma in \cite{TS90}. The box-ball system arises in quantum integrable systems and classical integrable systems via procedures called crystallization and ultradiscretization, respectively. It has connections to crystal base theory in quantum groups, solvable lattice models, combinatorial Bethe ansatz, tropical geometry, and more; see the survey~\cite{IKT12}.

\subsection{Robinson--Schensted tableaux}
A tableau is called \emph{standard} if 
the entries in its rows and columns are increasing and each of the integers in $[n]$ 
appears exactly once. 
A popular way to associate standard tableaux to permutations is via the Robinson--Schensted (RS) correspondence 
\[{w \mapsto (\P(w), \Q(w))}\]
from $S_n$ onto pairs of standard size-$n$ tableaux of the same shape~\cite{Sch61}.
The tableau $\P(w)$ is called the \emph{insertion tableau} of  $w$, and the tableau $\Q(w)$ is called the \emph{recording tableau} of $w$. The shape of these tableaux is called the \emph{RS partition} of $w$. 
For more details, see for example the textbook~\cite[Chapter~3]{Sag01}.

Schensted's classical theorem 
 says that 
the size of the first row (respectively, first column) of the RS partition of $w$ is equal to the length of a longest increasing (respectively, decreasing) subsequence of the one-line notation of $w$. 
A localized version of Schensted's theorem 
due to Lewis, Lyu, Pylyavskyy, and Sen 
interprets the size of the first row and the size of the first column of the BBS soliton partition as certain preserved statistics 
in a box-ball system. 
We discuss both theorems in Section \ref{sec:Schensted's theorem and local Schensted's theorem}.

As noted earlier, 
the soliton decomposition 
$\SD(w)$ of a permutation $w$ is not necessarily a standard tableau.
However, 
it is shown in~\cite{DGGRS23} that 
$\SD(w)$ is a standard tableau 
iff 
its shape coincides with the RS partition of $w$
iff $\SD(w)=\P(w)$. 
This 
connection between the soliton decomposition of a permutation and its insertion tableau 
motivates us to define a permutation $w$ to be \emph{BBS good} (\emph{good} for short) if $\SD(w)$ is a standard tableau. 
We conjecture that good permutations are closed under consecutive pattern containment (Conjecture~\ref{conj:good permutations closed}).

\subsection{RS recording tableaux}

Having seen the relationship between BBS soliton decompositions and RS insertion tableaux described in the previous paragraph, 
it is natural to ask whether RS recording tableaux may play a role in the study of box-ball systems. 
Surprisingly, the recording tableau of a permutation completely determines the BBS dynamics of the permutation, in the following sense.

\begin{thmIntro}
\label{thmIntro:Q determines steady state and Q determines BBS partition}
If $\pi, \omega$ are permutations such that $\Q(\pi) = \Q(\omega)$, 
then the following holds.
\begin{enumerate}
\item $\pi$ and $\omega$ have the same steady-state time (Theorem~\ref{thm:Q determines steady state})
\item 
The shape of $\SD(\pi)$ equals the shape of $\SD(\omega)$
(Theorem~\ref{thm:Q determines BBS partition})
\item $\pi$ is good iff $\omega$ is good (Theorem~\ref{thm:Q is either good or bad})
\end{enumerate}
\end{thmIntro}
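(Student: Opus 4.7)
The plan is to reduce all three parts of Theorem~\ref{thmIntro:Q determines steady state and Q determines BBS partition} to a single combinatorial statement about elementary dual Knuth moves, and then to deduce Part~(3) from Part~(2). Recall the classical fact that two permutations $\pi, \omega \in S_n$ satisfy $\Q(\pi) = \Q(\omega)$ if and only if they are connected by a chain of \emph{elementary dual Knuth moves}: each such move swaps the positions of two consecutive values $k$ and $k+1$ (or $k+1$ and $k+2$) in the one-line notation, provided the third value $k+2$ (respectively $k$) lies strictly between them in position. Since each invariant in the statement (equality of steady-state time, equality of $\SD$ shape, agreement of goodness) generates an equivalence relation, it suffices to verify that each is preserved under a single elementary dual Knuth move.

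For Parts~(1) and (2), the heart of the argument would be the following key lemma: if $\omega'$ is obtained from $\omega$ by an elementary dual Knuth move, then $\BB(\omega')$ is obtained from $\BB(\omega)$ by the \emph{same} positional swap, now applied to the BBS configuration (with empty boxes treated as inert placeholders). I would establish this by a direct case analysis on how one BBS move processes the two swapped balls together with the witness ball. Because the BBS rule processes balls in increasing order of label, the evolutions of $\omega$ and $\omega'$ agree up through ball $k-1$'s jump; the dual Knuth hypothesis that the witness lies strictly between the two swapped balls in position is exactly what forces the subsequent jumps of balls $k$, $k+1$, and $k+2$ to commute with the swap.

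Iterating the key lemma shows that $\BB^t(\omega)$ and $\BB^t(\omega')$ differ by a dual Knuth swap for every $t \geq 0$. Such a positional swap preserves the pattern of occupied versus empty boxes, and hence the decomposition of the configuration into maximal consecutive increasing runs of balls. Since this block decomposition determines both whether the system has reached steady state and the shape (as a partition) of $\SD$, Parts~(1) and (2) follow. For Part~(3), invoke the characterization from~\cite{DGGRS21} that a permutation $w$ is good if and only if the shape of $\SD(w)$ equals the shape of $\Q(w)$; the shape of $\Q(w)$ depends only on $\Q(w)$, and by Part~(2) the shape of $\SD(w)$ does too, so goodness depends only on $\Q(w)$.

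The main obstacle is the key lemma. A BBS move is globally defined — the destination of each ball's jump depends on the positions of every other ball, including those already processed in the current move — so swapping the labels $k$ and $k+1$ does not obviously commute with the move. The dual Knuth condition on the witness $k+2$ is exactly what makes the swap ``invisible'' after one BBS move, and verifying this requires careful bookkeeping of jump destinations on either side of the witness, together with checking that the witness itself lands in a position consistent with the swapped configuration. A secondary technical point is to phrase an ``elementary dual Knuth move'' directly on configurations with empty boxes, so that the iteration step is well-formulated even when the ambient state is no longer a permutation sitting in a single contiguous block.
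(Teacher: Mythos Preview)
Your overall strategy matches the paper's exactly: reduce to a single elementary dual Knuth move, prove a ``commutation'' lemma for one BBS step, iterate, and read off the invariants. Part~(3) is deduced from Part~(2) just as the paper does.

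There is, however, a genuine gap in your key lemma. You assert that $\BB(\omega')$ is obtained from $\BB(\omega)$ by the \emph{same} positional swap. This is false. The paper's Lemma~\ref{lem:bbs preserves knuth} shows that when $X$ and $Y$ differ by a dual Knuth relation of the second kind (swapping $k$ and $k+1$ with witness $k+2$ between them), and when after balls $1,\dots,k-1$ have jumped there is \emph{no} empty box between $k$ and $k+1$, then $\BB(X)$ and $\BB(Y)$ differ by a dual Knuth relation of the \emph{first} kind, now swapping $k+1$ and $k+2$ with $k$ as the new witness. The paper's explicit example is $X=451362$, $Y=452361$ (swap of $1,2$), yielding $\BB(X)=45e3126$, $\BB(Y)=45e2136$ (swap of $2,3$). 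No reading of ``same swap'' survives this: the pair of values changes, the pair of box indices changes, and the type of the relation changes. The correct statement of the key lemma is only that $\BB(X)$ and $\BB(Y)$ differ by \emph{some} dual Knuth relation, and the case analysis must track this degeneration.

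Once the lemma is stated correctly, your iteration still works, since what you actually use downstream is just that $\BB^t(\omega)$ and $\BB^t(\omega')$ differ by a dual Knuth relation. One further caution: your sentence ``preserves the pattern of occupied versus empty boxes, and \emph{hence} the decomposition into maximal consecutive increasing runs'' is not a valid implication on its own; the occupied/empty pattern does not determine the run structure. You need to argue separately (as the paper does in Lemma~\ref{lem:dual knuth relation implies identical configuration array}) that the dual Knuth condition forces the two swapped balls to lie in different increasing runs, so that the swap neither creates nor destroys a descent.
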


The last theorem tells us that the recording tableau $\Q(w)$ determines whether or not $w$ is good, 
so
we define a standard tableau $T$ to be \emph{good} 
if $\Q(\omega) = T$ implies $\omega$ is good, 
equivalently,  
if $\Q(\omega) = T$ for some good permutation $\omega$.
We conjecture that good tableaux are counted by the Motzkin numbers (Conjecture~\ref{conj:Motzkin}).

\subsection{First solitons and steady-state times}

In Section~\ref{sec:first soliton}
(respectively, \ref{sec:L-shaped} and \ref{sec:maximum sst}),
 we study the number of BBS moves required to create 
the rightmost soliton (respectively, all solitons).

In Section \ref{sec:first soliton}, we prove that applying one BBS move to a permutation is enough to produce the rightmost soliton of the box-ball system.
\begin{thmIntro}[Theorem~\ref{thm:first soliton created after 1 BBM}]
\label{thmIntro:first soliton created after 1 BBM}
If $X$ is a BBS configuration corresponding to a permutation $w$,
then the 
rightmost soliton is created after applying at most one BBS move to $X$, and 
this rightmost soliton 
is equal to the first row of $\P(w)$.  
\end{thmIntro}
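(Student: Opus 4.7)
The plan has three pieces: establish the correct length of the rightmost soliton, identify its entries with the first row of $\P(w)$, and confirm soliton stability. The length is essentially free: Schensted's theorem gives that the first row of $\P(w)$ has length $k$, where $k$ is the length of a longest increasing subsequence of $w$, and the local Schensted theorem cited in Section~\ref{sec:Schensted's theorem and local Schensted's theorem} shows that the first row of the BBS soliton partition of $\BB^t(X)$ has the same length $k$ at every time $t$. Writing the first row of $\P(w)$ as $b_1<b_2<\cdots<b_k$, patience sorting characterizes each $b_j$ as the smallest value in $w$ such that the longest increasing subsequence of $w$ ending at $b_j$ has length exactly $j$; this is the description I will try to match against the BBS dynamics.

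The heart of the proof is a direct analysis of one BBS move, aiming to show that the rightmost $k$ filled boxes of $\BB^1(X)$ contain the values $b_1,\ldots,b_k$ in left-to-right order. My preferred approach is induction on $n$: I would relate a BBS move on $w$ to a BBS move on a smaller permutation $w'$ obtained by deleting a well-chosen entry (for instance, ball~$1$, or the entry in the last position), and then compare the first row of $\P(w')$ with that of $\P(w)$ via the corresponding Schensted bumping step. An alternative is a direct bijective argument: for each $b_j$, explicitly track where ball $b_j$ lands at $t=1$ using the cascade order in which the balls jump, and verify that these landing positions form the rightmost $k$ filled boxes in left-to-right order. I expect this identification step to be the main technical obstacle, because matching the cascade of a single BBS move with the row-bumping of Schensted insertion requires a careful case analysis of how ball $b_j$ interacts with balls both inside and outside the eventual soliton.

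Finally, I need to verify that the rightmost $k$ balls at $t=1$ really do form a soliton, i.e., a maximal consecutive increasing run preserved by all future BBS moves. Maximality is immediate from the length count, since no longer increasing run could exist at any time without violating the local Schensted theorem. Preservation follows from standard BBS considerations: once a block of $k$ consecutive balls in increasing order sits at the rightmost filled positions of the configuration, any ball $y$ immediately to its left must satisfy $y>b_1$ (otherwise the block would extend leftward, contradicting maximality), and from this one deduces that the block advances rigidly by $k$ boxes under each subsequent BBS move. Combining the three pieces yields the desired conclusion that the rightmost soliton coincides with the first row of $\P(w)$ and is formed after at most one BBS move.
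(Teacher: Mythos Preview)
Your plan is workable in outline, but you are missing the tool that the paper uses to dissolve precisely the step you flag as the ``main technical obstacle.'' The paper does not attempt induction on $n$ or a direct case analysis of where each $b_j$ lands. Instead it invokes the \emph{carrier algorithm} (Section~\ref{sec:first soliton}): one BBS move can be computed by sweeping left to right, inserting each entry into a carrier via exactly the RS first-row bumping rule, and then flushing the carrier's contents out to the right. Because a permutation has no internal gaps, after all $n$ balls have been inserted the carrier holds literally $\Rowone(\P(w))$ (Remark~\ref{rem:carrier algorithm is the same as RS insertion algorithm for row 1}), and the flush deposits this sequence as the rightmost increasing run of $\BB^1(w)$. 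The identification you were bracing for is thus a one-line consequence of the known equivalence between the carrier and first-row Schensted insertion; your proposed induction or ball-tracking would be re-deriving this equivalence by hand.

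For stability the paper's argument is also tighter than your rigid-advance sketch. Rather than arguing that the length-$k$ block shifts by exactly $k$ at each step (which requires checking that balls with labels between $b_i$ and $b_{i+1}$ sitting to the left cannot land inside the block's new footprint), the paper observes that the rightmost run $a_1$ of $\BB^1(w)$ begins with ball $1$, so at every later time the rightmost run $a_t$ must contain $a_1$; hence $a_1\subseteq a_2\subseteq\cdots$. The localized Schensted theorem then gives $\operatorname{len}(a_t)\le \localincr(\BB^t(w))=\incr(w)=\operatorname{len}(a_1)$, forcing $a_t=a_1$ for all $t\ge 1$. This containment-plus-length-bound argument is shorter and avoids the interference analysis entirely.
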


Theorem~\ref{thmIntro:first soliton created after 1 BBM} is helpful for proving the rest of our results. 

\begin{thmIntro}
[Theorem~\ref{thm: L-shaped SD SST}]\label{thmIntro: L-shaped SD SST}
If a permutation $w$ has an L-shaped soliton decomposition, that is, the shape of $\SD(w)$ is of the form $(s,1,1,\dots)$, then the steady-state time of $w$ is either $0$ or~$1$.
\end{thmIntro}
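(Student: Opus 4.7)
The plan is to apply Theorem~\ref{thmIntro:first soliton created after 1 BBM} to peel off the rightmost soliton after one BBS move, and then to argue that the remaining $n - s$ balls are already singleton solitons, putting $\BB^1(w)$ in steady state. The trivial case $s = n$ is handled first: here $\SD(w)$ consists of a single row, so $w$ is strictly increasing and constitutes its own soliton, giving steady-state time $0$.

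Now suppose $s < n$. By Theorem~\ref{thmIntro:first soliton created after 1 BBM}, the rightmost soliton $R$ of $\BB^1(w)$ has length $s$ and equals the first row of $\P(w)$. Let $U$ denote the set of $n - s$ balls of $\BB^1(w)$ lying to the left of $R$; these balls must eventually populate the $n - s$ singleton rows of $\SD(w)$ below the first row. It will suffice to verify that each ball of $U$ is already a singleton soliton in $\BB^1(w)$, meaning no two balls of $U$ occupy positionally adjacent boxes with values in increasing order. Granting this claim, $\BB^1(w)$ consists of $n - s$ singletons (each advancing one box per BBM) to the left of the size-$s$ soliton $R$ (advancing $s$ boxes per BBM). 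Since $R$ outruns the singletons and the singletons all move at the same speed, no further structural changes occur, so $\BB^1(w)$ is in steady state.

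The main obstacle is establishing this singleton claim for $U$. If two balls of $U$ were positionally adjacent and in increasing order in $\BB^1(w)$, they would form a maximal consecutive increasing sequence of length at least $2$, which would either persist as a soliton of size $\geq 2$ in steady state or else interact with other balls to contribute a non-singleton row to $\SD(w)$; either outcome contradicts the L-shape assumption on $\SD(w)$. Making this rigorous will likely require either a direct combinatorial analysis of the BBM from $w$ to $\BB^1(w)$ that tracks how the formation of $R$ leaves every other ball either separated from its right neighbor by an empty box or adjacent to a strictly smaller ball, or else an inductive argument on $n - s$ that repeatedly invokes Theorem~\ref{thmIntro:first soliton created after 1 BBM} (suitably extended beyond permutations) to peel off one singleton at a time from $U$ until only $R$ remains.
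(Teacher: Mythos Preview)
Your overall plan matches the paper's: use Theorem~\ref{thmIntro:first soliton created after 1 BBM} to isolate the length-$s$ soliton $R$ after one move, then argue that the remaining $n-s$ balls in $U$ are already singletons. The genuine gap is in establishing the singleton claim, which you yourself flag as unfinished. Your proposed justification---that a length-$\geq 2$ increasing run in $U$ would necessarily contribute a non-singleton row to $\SD(w)$---is not valid as stated: a priori an increasing run can break apart under later BBS moves while another pair merges, so one cannot read off soliton sizes directly from the increasing runs at a non-steady time.

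The paper closes this gap in one line using a fact you do not invoke: the number of increasing runs is a BBS invariant (Remark~\ref{rem:height of ID is an invariant}, a special case of the localized Schensted theorem). Since $\SD(w)$ has $1+(n-s)$ rows, so does $\ID(\BB^1(w))$. The rightmost row already has length $s$, so the remaining $n-s$ rows together contain only $n-s$ balls and hence must all be singletons. No direct combinatorial analysis of the move and no induction on $n-s$ is needed.

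For the final step, your speed argument is morally correct but informal; the paper instead appeals to the configuration-array criterion (Proposition~\ref{prop:t=0 generalization}): a size-$s$ run preceded by singletons in weakly decreasing position automatically yields a standard skew tableau with weakly decreasing row lengths, certifying steady state directly.
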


In particular, we show in Section~\ref{sec:L-shaped} that such permutations include column reading words and noncrossing involutions.

We also investigate upper bounds of steady-state times. 
It was conjectured in~\cite[Conjecture~1.1]{DGGRS23} that the steady-state time of a permutation in $S_n$ is at most $n-3$. 
We prove a special case of this conjecture. 
\begin{thmIntro}[Theorem~\ref{thm:max SST of T with sh(n-3,2,1)}]
All permutations 
with RS partition 
$(n-3,2,1)$ 
have steady-state time at most $n-3$.
\end{thmIntro}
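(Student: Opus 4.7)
Plan: The argument rests on Theorem~\ref{thm:first soliton created after 1 BBM}, which guarantees that after a single BBS move, the rightmost soliton equals the first row of $\P(w)$. For a permutation with RS partition $(n-3, 2, 1)$, this rightmost soliton has size $n - 3$, so the shape of $\SD(w)$ must be of the form $(n-3, \mu)$ for some partition $\mu$ of $3$. This leaves three sub-cases: $\mu = (3)$, $(2, 1)$, or $(1, 1, 1)$.

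The sub-case $\mu = (1, 1, 1)$ is L-shaped, so Theorem~\ref{thm: L-shaped SD SST} immediately yields steady-state time at most $1 \leq n - 3$. In the sub-case $\mu = (3)$, the three balls not belonging to the rightmost soliton must eventually coalesce into a single size-$3$ soliton. Since the size-$(n-3)$ soliton moves rightward at speed $n - 3$ and quickly leaves the interaction zone, the three remaining balls evolve essentially as an isolated sub-BBS, and my expectation is that this coalescence occurs within a constant number of additional moves, comfortably within the $n - 3$ bound.

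The main case, and the main obstacle, is $\mu = (2, 1)$. Here the three remaining balls must separate into a size-$2$ soliton and a singleton, and the collision between them can require several additional BBS moves -- indeed, the example $w = 452361$ with $n = 6$ saturates the bound $n - 3 = 3$. I would approach this case by carefully tracking the positions of the three balls starting from time $t = 1$, using a monovariant such as the gap between the eventual pair and the eventual singleton to bound the collision duration. The key numerical input is that the three balls at time $t = 1$ occupy positions inside an interval whose length is bounded by a function of $n$; my hope is that a direct analysis gives the required upper bound of $n - 4$ additional moves, yielding total steady-state time at most $1 + (n - 4) = n - 3$.

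A complementary strategy is to exploit Theorem~\ref{thm:Q determines steady state}: since the steady-state time depends only on $\Q(w)$, it suffices to prove the bound for one convenient representative per $\Q$-class of shape $(n-3, 2, 1)$. For instance, selecting the involution $w$ with $\P(w) = \Q(w)$ could make the one-line notation explicit enough to allow a uniform analysis, possibly by induction on $n$, with the base cases $n = 5, 6$ verified by finite enumeration.
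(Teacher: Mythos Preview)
Your plan matches the paper's approach: form the first soliton after one move via Theorem~\ref{thm:first soliton created after 1 BBM}, reduce to the three leftover balls, dispatch the L-shaped case via Theorem~\ref{thm: L-shaped SD SST}, and in the remaining case track a gap monovariant.

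Two points where your outline needs sharpening. First, the case $\mu=(3)$ is vacuous: by Remark~\ref{rem:sizes of RS and BBS}\eqref{itm:rem:perm stats inequalities} the height of $\SD(w)$ is at least that of $\P(w)$, namely~$3$, so $\sh\SD(w)\in\{(n-3,2,1),(n-3,1,1,1)\}$ only (this is the paper's Lemma~\ref{lem:sh(n-3,2,1)}). Your hand-wave about coalescence should be replaced by this observation.

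Second, in the $\mu=(2,1)$ case the paper makes two further splits you omitted. At $t=1$ the three balls form a pair $\mathbf{ab}$ ($\mathbf{a}<\mathbf{b}$) and a singleton $\mathbf{x}$; if $\mathbf{x}$ lies to the \emph{left} of $\mathbf{ab}$, the configuration is already steady. Otherwise one first checks $\mathbf{x}<\mathbf{b}$ (else the pair would overtake $\mathbf{x}$ and form a size-$3$ soliton, giving the forbidden shape $(n-3,3)$), and then the gap $m$ between $\mathbf{ab}$ and $\mathbf{x}$ drops by one per move, reaching steady state at time exactly $m+2$ (Lemma~\ref{lemma:thm:max SST of T with sh(n-3,2,1)}). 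The numerical input you anticipated is made precise as follows: ball~$1$ lands in box $n+1$, so $\mathbf{a},\mathbf{b},\mathbf{x}$ lie among boxes $1,\dots,n$; and crucially boxes $1$ and $2$ are both empty at $t=1$. The paper records this tersely as ``$k\ge 2$ because $\mathbf{a}<\mathbf{b}$''; the content is that box~$1$ can never be refilled, and if box~$2$ were occupied its occupant would have to be $w_1$ with $w_1>w_2$, forcing the ball in box~$3$ to be $w_2<w_1$, contradicting $\mathbf{a}<\mathbf{b}$. This gives $m\le n-5$, hence $\SST(w)=m+2\le n-3$. Your plan would land here, but isolating $k\ge 2$ is where the work lies. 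The complementary $\Q$-equivalence strategy is not needed and would be more laborious than this direct count.
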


In Section~\ref{sec:chain of tableaux}, we use Bender--Knuth involutions to construct 
a sequence of 
tableaux 
with steady-state times from 0 to $n-3$.


\section{Steady State}
\label{sec:steady state}

A BBS configuration $X$ is a sequence indexed by $\mathbb{Z}$ where each 
number in $[n]$ 
(each denoting a ball) appears exactly once, and $e \coloneqq n+1$ (denoting an empty box) 
appears infinitely many times. 
Let $\BB^t(X)$ denote the result of applying $t$ BBS moves to a BBS configuration $X$. 

An \emph{increasing run} of a permutation
is a maximal increasing contiguous nonempty subsequence.
For example, the increasing runs of the permutation 452361 are 45, 236, and 1. 
An \emph{increasing run} of a BBS configuration is a maximal increasing contiguous nonempty sequence of balls. 

A \emph{soliton} is an increasing run 
that is preserved by all subsequent BBS moves. 
A BBS configuration is said to be in \emph{steady state}
if every ball is contained in a soliton. 
The \emph{steady-state time} of a permutation $w$ is the number of BBS moves required for $w$ to reach steady state.

\subsection{Increasing decomposition and steady state}
In this section, we give a set of criteria for 
steady state. 
\begin{definition}
The \emph{increasing run decomposition}
of a BBS configuration $X$, denoted by $\ID(X)$, 
is the table where the rightmost increasing run of $X$ is the first (top) row, the next increasing run to its left is the second row, and so on.
\end{definition}
\begin{example}
Let
$
X=
e \, e \, e \, e  \, 4 \, 5 \, 2 \, e \, e \, 1 \, 3 \, 6 \, e \, e \, e\cdots$. 
Then 
\[
\ID(X) = 
{\young(136,2,45)}
\]
\end{example}
\begin{remark}
\label{rem:height of ID is an invariant}
A special case of {\cite[Lemma 3.5]{LLPS24}} is that the height of the increasing run decomposition (i.e. the number of increasing runs) is an invariant of the box-ball system, that is, 
the number of rows in $\ID(X)$ is equal to that of $\ID(\BB^t(X))$ for all $t \in \mathbb{Z}$.
See Theorem~\ref{thm:local Schensted's theorem for BBS}. 
\end{remark}
\begin{remark} A BBS configuration $X$ is in steady state iff
\[
\ID(\BB^t(X))=\ID(X)
\; \; \text{for each } 
t=1,2,3,\dots
\]
If $X$ is a BBS configuration which is in steady state,  
then by definition $\ID(X)$ is equal to the soliton decomposition of the BBS system.
\end{remark}
The following gives a way to check whether a BBS configuration has reached steady state. 
\begin{proposition}[Steady-state characterization using ID]
\label{prop:characterization of steady state}
A BBS configuration $X$ is in steady state iff
\begin{enumerate}
    \item  \label{lem:characterization of steady state:1}
    the rows of $\ID (X)$ are weakly decreasing in length, and 
    \item \label{lem:characterization of steady state:2}
    $\ID(\BB(X)) = \ID(X)$
\end{enumerate}
\end{proposition}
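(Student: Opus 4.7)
The forward direction is a routine definition-chase. If $X$ is in steady state, then every ball lies in a soliton, so the increasing runs of $X$ coincide with its solitons; since solitons are by definition preserved by every BBS move, one application of $\BB$ leaves the runs and their right-to-left ordering unchanged, giving (2). For (1), a soliton of size $s$ translates exactly $s$ boxes per BBS move, so for distinct solitons never to interact they must be arranged with weakly increasing sizes from left to right; reading these sizes right-to-left down the rows of $\ID(X)$ yields the weakly decreasing row-length condition.

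For the backward direction, suppose (1) and (2) hold and label the increasing runs of $X$ from left to right as $R_1, R_2, \dots, R_k$ with sizes $s_1 \le s_2 \le \dots \le s_k$ (condition (1)) and gaps $g_1, \dots, g_{k-1}$ between consecutive runs. Being in steady state is equivalent to $\ID(\BB^t(X)) = \ID(X)$ for all $t \ge 0$ by the remark preceding the proposition, but condition (2) only supplies this at $t=1$; the plan is therefore to argue inductively that $\BB(X)$ also satisfies (1) and (2), and then iterate. The inductive step reduces to the following structural lemma: if $X$ satisfies (2), then in one BBS move each run $R_i$ is translated rigidly by exactly $s_i$ positions to the right, so the new gaps are $g_i' = g_i + (s_{i+1} - s_i)$. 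Granting this lemma together with (1), one has $g_i' \ge g_i \ge 0$, so $\BB(X)$ has the same run sizes in the same order as $X$, the same $\ID$, and at-least-as-large gaps; hence both (1) and (2) pass to $\BB(X)$, and iterating yields $\ID(\BB^t(X)) = \ID(X)$ for every $t \ge 0$, identifying each $R_i$ as a soliton.

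To prove the structural lemma I would process the runs from right to left. For the rightmost $R_k$, every position beyond it is empty, so its balls, which jump in label order and in $R_k$ this agrees with their left-to-right position order, land in the $s_k$ empty boxes immediately following $R_k$, giving displacement exactly $s_k$. For $R_i$ with $i < k$, condition (2) forces the balls of $R_i$ to reappear in $\BB(X)$ as a contiguous increasing run with the same entries, so they move rigidly as a block; the displacement is then pinned down by locating the first empty box to the right of $R_i$ once the already-relocated $R_{i+1}, \dots, R_k$ are placed at their new positions. The main obstacle is that balls jump in label order rather than in position order, so a small-labeled ball sitting in an earlier run could a priori ``punch through'' a later run and disrupt its block structure; ruling this out is precisely where condition (2) is essential, since the forced preservation of $\ID$ forbids any such disruption. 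Condition (1) then enters only at the end, to guarantee $g_i' \ge g_i$ and hence the persistence of the hypotheses under iteration.
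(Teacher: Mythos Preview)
The paper states this proposition without proof, so there is no ``paper's own proof'' to compare against; I will therefore assess your argument on its merits.

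Your forward direction is fine. The backward direction has a genuine circularity in the inductive step. You want to show that (1) and (2) for $X$ imply (1) and (2) for $\BB(X)$, and then iterate. Condition (1) passes trivially, since (2) for $X$ forces $\BB(X)$ to have the same run multiset. But your justification that (2) passes --- ``$\BB(X)$ has the same $\ID$ and at-least-as-large gaps; hence \dots (2) pass[es] to $\BB(X)$'' --- is exactly the statement $\ID(\BB^2(X))=\ID(\BB(X))$ that you are trying to prove. Your structural lemma, as stated, takes (2) as a \emph{hypothesis}, so you cannot invoke it on $\BB(X)$ without already knowing (2) holds there. What you actually need is a monotonicity statement of the form ``if $X$ satisfies (2) and $Y$ has the same runs in the same order with gaps $h_i\ge g_i$, then $Y$ satisfies (2)'', and you neither state nor prove this. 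It is plausible (and provable, e.g.\ via a careful carrier-algorithm comparison showing that enlarging a gap can only shrink the carrier's content entering the next run), but it is not automatic and it is where the real work lies.

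A secondary issue: even granting (2) for $X$, your sketch does not actually pin down the displacement of $R_i$ as exactly $s_i$. You argue that (2) forces $R_i$ to reappear as a contiguous block, which is correct, but ``contiguous block somewhere to the right'' is weaker than ``translated by $s_i$''. One clean way to nail this down is via the carrier: the first $s_1$ outputs are $e$'s, and the very next output is the smallest element of $R_1$ (this uses that either $g_1\ge 1$, or $g_1=0$ and the first element of $R_2$ is below the minimum of $R_1$ --- the latter being forced by (2)); hence $R_1$ starts at position $p_1+s_1$. Pushing this through the later runs requires tracking the carrier content inductively, which your right-to-left processing sketch does not do.
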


\subsection{Configuration array and steady state}
\label{sec:configuration array}
A BBS state $X$ can be represented by the \emph{configuration array} $\CA(X)$ containing the integers from 1 to $n$ as follows: scanning the boxes from right to left, each increasing run becomes a row in the array. 
A string of $g$ empty boxes 
 indicates that the next row below should be shifted $g$ spaces to the left.
Note that this array has increasing rows but not necessarily increasing columns; it may be disconnected and it may not have a valid skew shape. 

The following is a corollary of a characterization for steady state (called `separation condition') given in \cite{LLPS24}. For a proof, see {\cite[Section~5]{DGGRS23}}.

\begin{proposition}[Steady-state characterization using CA]\label{prop:t=0 generalization} 
A BBS configuration $X$ is in steady state 
iff 
its configuration array $\CA(X)$
is a standard (possibly disconnected) skew tableau whose 
 rows are weakly decreasing in length.
\end{proposition}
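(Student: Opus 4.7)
The plan is to deduce this from the \emph{separation condition} of LLPS19, which characterizes steady state in terms of the increasing runs of $X$: reading them from right to left, their lengths must be weakly decreasing, and each consecutive pair must be separated by enough empty boxes that they will not interact under any future BBS move. The strategy is then to translate this condition into the combinatorics of the configuration array $\CA(X)$.

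For the forward direction, I would assume $X$ is in steady state. Then its increasing runs are solitons whose lengths form a partition (weakly decreasing read right-to-left), and since $\CA(X)$ lists these runs from the rightmost (top row) downward, condition (2) is immediate. For condition (1), rows are increasing by definition of increasing run, so only column-strictness is in question. I would use the shift rule --- a gap of $g$ empty boxes shifts the next row $g$ positions further to the left --- to identify, for each pair of consecutive rows of lengths $\ell_i, \ell_{i+1}$ and gap $g_i$, the exact arithmetic condition that forces every shared column to be strictly increasing top to bottom. I would then show that this condition coincides with the LLPS19 separation threshold between the corresponding runs, so that steady state forces $\CA(X)$ to be a standard skew tableau. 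The converse runs the same dictionary backwards: weakly decreasing row lengths in $\CA(X)$ together with column-strictness recover the hypotheses of the separation condition run-by-run, so LLPS19 concludes that $X$ is in steady state.

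The main obstacle is making the correspondence ``column-strictness in $\CA(X)$ versus separation threshold in $X$'' precise rather than just morally correct. Specifically, whenever a column of $\CA(X)$ fails to strictly increase, the offending pair of entries must localize to two balls in adjacent solitons whose relative position is exactly what the separation condition forbids; this requires a careful look at the left-to-right orderings of two consecutive runs together with the horizontal shift induced by the intervening empty boxes, and (in the disconnected case) the observation that rows with no shared columns trivially satisfy column-strictness, matching the fact that very large gaps exceed any separation threshold. Once this bookkeeping is set up, both directions follow immediately; the argument is essentially the one spelled out in \cite[Section~5]{DGGRS21}.
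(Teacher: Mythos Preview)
Your proposal is correct and matches the paper's approach: the paper does not supply its own proof but simply cites the separation condition of \cite{LLPS19} and defers to \cite[Section~5]{DGGRS21}, which is exactly the derivation you have sketched. Your outline of translating the separation threshold into column-strictness of $\CA(X)$ via the gap-induced shifts is the intended argument.
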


\begin{example}
Let $w= 452361$, the example from Figure \ref{fig:intro:452361:t=0 to t=4}. 
The following are the box-ball system states from time $t=0$ to $4$ and their configuration arrays.

\begin{align*}
&
\BB^0(X) = 4 \, 5 \, 2 \, 3 \, 6 \, 1 \, e \, e \,e \, e\, e \, e \, e \, e \, e\dots
& {\young(1,236,45)} 
\\[3mm]
& 
\BB^1(X) = e \, e \, 4 \, 5 \, e \, 2 \, 1 \, 3 \, 6 \, e \, e \, e \, e \, e \, e\cdots
& 
{\young(:136,:2,45)}
\\[3mm]
& 
\BB^2(X) = e \, e \, e \, e  \, 4 \, 5 \, 2 \, e \, e \, 1 \, 3 \, 6 \, e \, e \, e\cdots
&
{\young(::136,2,45)}
\\[3mm]
& 
\BB^3(X) = e \, e \, e \, e \, e \, e \, 4 \, 2 \, 5 \, e \, e \, e \, 1 \, 3 \, 6\cdots
&
{\young(:::136,25,4)}
\\[3mm]
& 
\BB^4(X) = e \, e \, e \, e \, e \, e \, e \, 4 \, e \, 2 \, 5 \, e \, e \, e \, e \, 1 \, 3 \, 6\cdots
&
{\young(:::::136,:25,4)}
\end{align*}
In this box-ball system, all configurations at time $t \geq 3$ are in steady state, so the steady-state time of $452361$ is $3$.
\end{example}

\begin{remark}
\label{rem:t=0}
The \emph{row reading word} of a 
 tableau is the permutation formed by concatenating the rows of the tableau from bottom to top. 
For instance, $425136$ is the row reading word of the standard tableau
\[\young(136,25,4)
\]
It follows from Proposition~\ref{prop:t=0 generalization} 
 that  
a permutation has steady-state time $0$ iff 
it is the row reading word of a standard tableau. 
\end{remark}


\section{A localized version of Schensted's theorem for box-ball systems}
\label{sec:Schensted's theorem and local Schensted's theorem}

Schensted's theorem explores connection between the RS partition and lengths of increasing and decreasing subsequences. 
A localized version of Schensted's theorem by Lewis, Lyu, Pylyavskyy, and Sen explores similar connection between the BBS soliton partition and certain invariants of the BBS system. 
We describe both theorems in this section.

\subsection{Schensted's theorem and RS partition}

In {\cite[Theorem~1]{Sch61}}, Schensted gives 
meaning 
to the first row and the first column of an RS partition.

Let $\incr(w)$ (respectively, $\decr(w)$) denote the size of a longest increasing (respectively, decreasing) subsequence of the one-line notation of a permutation $w$.

\begin{theorem}
[Schensted's theorem]
\label{thm:Schensted's theorem}
The size of the first row (respectively, first column) of the RS partition of a permutation $w$ is equal to $\incr(w)$ 
(respectively, $\decr(w)$).
\end{theorem}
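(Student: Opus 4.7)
My plan is to use the standard row-insertion machinery of the RSK correspondence, proving a refined statement about partial insertion tableaux. Writing $\P^{(k)}$ for $\P(w_1 w_2 \cdots w_k)$, the heart of the argument is the following invariant: for each column index $j$, the first-row entry $\P^{(k)}_{1,j}$ exists if and only if $w_1 \cdots w_k$ has an increasing subsequence of length $j$, and in that case $\P^{(k)}_{1,j}$ equals the minimum possible last letter of such a subsequence. Specializing $k = n$ and letting $j$ range up to the maximal column immediately yields the first-row half of Schensted's theorem.

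The invariant is proved by induction on $k$; the base case $k = 0$ is vacuous. For the inductive step, let $a = w_{k+1}$, and suppose row-inserting $a$ into $\P^{(k)}$ modifies position $(1, j)$, either by bumping the previous entry $\P^{(k)}_{1, j} > a$ or by appending $a$ just past the end of the first row. The row-insertion rule forces $\P^{(k)}_{1, j-1} < a$; combined with the inductive hypothesis at column $j-1$, this gives a length-$(j-1)$ increasing subsequence of $w_1 \cdots w_k$ ending at $\P^{(k)}_{1, j-1}$ that extends via $a$ to a length-$j$ increasing subsequence ending at $a$. Conversely, any length-$j$ increasing subsequence of $w_1 \cdots w_{k+1}$ ending at some $v < a$ cannot involve $a$, so it lies in $w_1 \cdots w_k$; by induction its last letter is at least $\P^{(k)}_{1, j}$, which either exceeds $a$ (bumping case) or fails to exist (appending case), contradicting $v < a$. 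For columns $j' \neq j$ the first-row entries are unchanged, and the same candidate analysis shows that $a$ does not lower the minimum: in columns $j' < j$ the previous value is already $\leq a$, while in columns $j' > j$ one has $\P^{(k)}_{1, j'-1} > a$, so $a$ cannot extend any length-$(j'-1)$ subsequence.

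For the first-column statement I would dualize the above to column insertion. Column insertion of $w$ produces a tableau $\P^{\mathrm{col}}(w)$, and the mirror of the induction above shows that its $(i, 1)$-entry records the minimum last letter of a length-$i$ decreasing subsequence of $w$; hence the first column of $\P^{\mathrm{col}}(w)$ has length $\decr(w)$. The proof is completed by invoking the classical fact that row and column insertion of a permutation produce tableaux of the same shape.

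The main obstacle is precisely this last step: the row-insertion invariant by itself does not see decreasing subsequences, so one must either run the column-insertion machinery in parallel and verify the equality of shapes, or invoke a symmetry of RSK (for example an involution on $S_n$ that swaps increasing with decreasing subsequences and is known to conjugate the shape of $\P$) to transfer the already-proved first-row statement into the first-column statement for $w$. The first-row bookkeeping is mechanical once the refined invariant is written down; the first-column part is the real substantive step.
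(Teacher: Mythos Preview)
The paper does not prove this theorem at all: Schensted's theorem is stated with a citation to \cite{Sch61} and used as a black box throughout. There is therefore no ``paper's own proof'' to compare against.

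Your outline is the classical argument and is correct. The refined invariant on $\P^{(k)}_{1,j}$ is exactly Schensted's original lemma, and your inductive step is the standard one. For the first-column half, the cleanest route is not to invoke ``row and column insertion yield the same shape'' as a separate fact, but rather to use the symmetry $\P(w^{\mathrm{rev}})=\P(w)^{T}$ (where $w^{\mathrm{rev}}$ is the reversal of the one-line notation and ${}^{T}$ denotes conjugation of the tableau), since reversal interchanges increasing and decreasing subsequences; this is the argument you allude to at the end. Either way your plan is sound, and nothing further is needed here since the paper treats the result as known.
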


\begin{example}
\label{ex:thm:Greene's theorem:5623714}
Let $w=5623714$.  
The longest increasing subsequences are 
$
5 6 7$, 
$2 3 7$,
and 
$2 3 4$, 
so $\incr(w) =3$. 
The longest 
decreasing subsequences are 
$521$, 
$621$, 
$531$, and
$631$,  
so $\decr(w) =3$. 
The corresponding RS tableaux are
\[
\P(w)=\young(134,267,5), 
\qquad
\Q(w)=
\young(125,347,6)
\]
\end{example}

Schensted's theorem has an important generalization in Greene's theorem (\cite[Theorem ~3.1]{Gre74}), 
which interprets the RS partition as sizes of largest unions of increasing and decreasing sequences.
For more details, see for example Chapter 3 of the textbook~\cite{Sag01}.

\subsection{Localized Schensted's theorem and BBS soliton partition}
\label{sec:local Schensted's theorem}

In~\cite[Lemma 3.5]{LLPS24},  
Lewis, Lyu, Pylyavskyy, and Sen present  
a localized version of 
Greene's theorem for box-ball systems.  
In this section we discuss a special case of their result, reframed to match our box-ball convention.
We are calling this special case ``a localized version of Schensted's theorem".
The reason is that,  
when adapted to permutations,   
the statement of their result 
can be obtained from the original 
Schensted's theorem with RS partition replaced by BBS soliton partition and with  ``size of a longest decreasing subsequence" replaced by ``number of descents plus 1".

Given a (possibly infinite) sequence $X$, 
an integer $j$ is called a \emph{descent} of $X$ if $X(j) > X(j+1)$.

\begin{theorem}[Localized Schensted's theorem for permutations]
\label{thm:local Schensted's theorem}
Suppose $w$ is a permutation.
\begin{enumerate}
\item 
The size of the first row of the BBS soliton partition of 
$w$ is equal to $\incr(w)$, the size 
of a longest increasing  subsequence of $w$.
\item 
The size of the first column of the BBS soliton partition of 
$w$ is equal to $1 + |\{ \text{descents of } w \}|$, denoted by $\localdecr(w)$.
\end{enumerate}
\end{theorem}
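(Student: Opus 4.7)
The plan is to address the two parts with distinct tools from earlier in the excerpt. Part~(2) will follow readily from the invariance of the height of $\ID(X)$ under BBS moves, whereas part~(1) requires the stronger fact that the rightmost soliton is built out of a longest increasing subsequence of $w$.

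For part~(2), I would use Remark~\ref{rem:height of ID is an invariant}: the number of rows of $\ID(X)$ is the same at every time $t$. At $t=0$ the configuration $X$ for $w$ has no interior empty boxes, so the rows of $\ID(X)$ are exactly the increasing runs of the one-line notation of $w$; the number of increasing runs equals $1+|\{\text{descents of }w\}|=\localdecr(w)$. Invariance then carries this count to any steady-state time $T$, at which point $\ID(\BB^T(X))=\SD(w)$, so $\SD(w)$ has $\localdecr(w)$ rows. Since the length of the first column of a partition is its number of rows, this gives the claimed identity.

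For part~(1), my plan is to combine Schensted's theorem (Theorem~\ref{thm:Schensted's theorem}) with Theorem~\ref{thmIntro:first soliton created after 1 BBM}. Schensted identifies $\incr(w)$ with the length of the first row of $\P(w)$, and Theorem~\ref{thmIntro:first soliton created after 1 BBM} identifies the rightmost soliton, i.e.\ the first row of $\SD(w)$, with that same first row of $\P(w)$. Chaining the two equalities yields the statement.

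The main obstacle is that part~(1), as just argued, relies on a forward reference to a theorem proved later in the paper. A self-contained alternative is to cite directly the specialization of \cite[Lemma~2.1]{LLPS19}, which the excerpt explicitly notes is what this theorem encodes. A third route, closer in spirit to Section~\ref{sec:steady state}, would first show that $\incr$ is invariant under a single BBS move and then deduce from the configuration-array criterion (Proposition~\ref{prop:t=0 generalization}) that at steady state no increasing subsequence can exceed the length of the rightmost soliton. The second half of this third route is the delicate step, because a steady-state configuration may a priori admit an increasing subsequence formed by concatenating elements drawn from several solitons, and one must exploit the separation arrangement imposed by Proposition~\ref{prop:t=0 generalization} to rule out any such concatenation exceeding the top row of $\SD(w)$.
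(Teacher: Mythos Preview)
The paper does not supply its own proof of this theorem; it is stated in Section~\ref{sec:local Schensted's theorem} as a special case of \cite[Lemma~2.1]{LLPS19}. Your second alternative for part~(1)---simply citing that lemma---is therefore exactly what the paper does, and your argument for part~(2) via Remark~\ref{rem:height of ID is an invariant} is correct but amounts to the same thing, since that remark is itself justified by \cite[Lemma~2.1]{LLPS19}.

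Your primary route for part~(1), however, is not merely a forward reference: it is circular. The proof of Theorem~\ref{thm:first soliton created after 1 BBM} explicitly invokes Theorem~\ref{thm:local Schensted's theorem for BBS} to obtain the bound $\len(a_t)\le \localincr(\BB^t(\omega))=\incr(\omega)$, and Theorem~\ref{thm:local Schensted's theorem for BBS} is the general BBS-configuration statement whose specialization to permutations \emph{is} Theorem~\ref{thm:local Schensted's theorem}. So deducing Theorem~\ref{thm:local Schensted's theorem} from Theorem~\ref{thm:first soliton created after 1 BBM} assumes what you are trying to prove. Your third route avoids this trap in principle, but you have correctly identified its hard step (ruling out long increasing subsequences spread across several solitons at steady state), and carrying that out would essentially reprove the relevant part of \cite[Lemma~2.1]{LLPS19}.
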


\begin{example}
\label{ex:lem:local Greene's theorem:5623714}
Let $w=5623714$, the permutation from Example~\ref{ex:thm:Greene's theorem:5623714}. 
Then $\incr(w) = 3 $
as computed earlier, 
and
$\localdecr (w) = 1 + |\{\text{descents of } 5623714\}| = 1 + |\{ 2, 5 \}| = 3.$  
Note that 
the soliton decomposition 
\[
\SD(w)=
\young(134,27,56)
\]
is a nonstandard tableau because the second column of the tableau is $376$, a non-increasing sequence, when read from top to bottom. We see that
$\sh \SD(w)=(3,2,2)$ while $\sh \P(w)=(3,3,1)$. 
In particular, 
$\SD(w) \neq \P(w)$, 
see
Theorem~\ref{thm:tfae}. 
\end{example}

\begin{remark}
\label{rem:sizes of RS and BBS}
Let $w$ be a permutation.
\begin{enumerate}
\item 
\label{itm:rem:lambdaRS1 equals lambdaBBS1}
Schensted's theorem (Theorem~\ref{thm:Schensted's theorem}) and localized Schensted's theorem for permutations (Theorem~\ref{thm:local Schensted's theorem}) tells us that 
\begin{align*}
\text{(the size of the first row of $\P(w)$)}
&=
\text{(the size of the first row of $\SD(w)$)}\\
 &=
\text{(the size of the first (rightmost) soliton of $w$)}.
\end{align*}

\item
\label{itm:rem:perm stats inequalities}
It follows from the definition that 
\[
  \decr(\omega)
 \leq 
 \localdecr(\omega).
\]
Combining this with 
Theorem~\ref{thm:Schensted's theorem} 
and
Theorem~\ref{thm:local Schensted's theorem}, 
\begin{align*}
\text{
(the size of the first column of $\P(w)$)}
&\leq 
\text{(the size of the first column of $\SD(w)$)}\\
&=
\text{(the number of solitons of $w$).}
\end{align*}
\end{enumerate}
\end{remark}

The statistics in Theorem~\ref{thm:local Schensted's theorem}
can be defined for general BBS configurations.
Recall that a BBS configuration
is a sequence indexed by $\mathbb{Z}$ where each number in $[n]$ 
(denoting balls) appears exactly once, and $e \coloneqq n+1$ (denoting empty boxes) appears infinitely many times.

\begin{definition}
Let $X$ be a BBS configuration with $n$ distinct balls labeled by $[n]$.
\begin{enumerate} 
\item 
Given a finite, increasing sequence $u$ of balls in $X$, the \emph{penalized length} of $u$ in $X$ is the number of balls in $u$ minus the number of gaps (i.e., empty boxes) between the first and last balls of $u$.
Let $\localincr(X)$ denote the maximum penalized length of increasing subsequences of balls in $X$. 

\item 
Let $\localdecr(X)$ denote the number of descents of $X$, equivalently, the number of rows of $\ID(X)$.

\end{enumerate}
\end{definition}

Note that, since $X$ consists of $n$ balls and the empty boxes have values $e=n+1$,  
we have
$1 \leq \localdecr(X), ~\localincr(X) \leq n$. 
If the leftmost ball is in box $j$, then $j-1$ is a descent of $X$, since $X(j-1)=e>X(j)$.

\begin{example}
Consider the following BBS configuration $X$:
\begin{center}
\begin{tabular}
{c||p{.2in}|p{.1in}|p{.1in}|p{.1in}|p{.1in}|p{.1in}|p{.1in}|p{.1in}|p{.1in}|p{.1in}|p{.13in}|p{.3in}}
\raisebox{0in}[.2in][.1in]{}$j$ & \dots & 1 & 2 & 3 & 4 & 5 & 6 & 7 & 8 & 9 & 10 & \dots \\
\hline
\raisebox{0in}[.2in][.1in]{}$X(j)$ & \dots & e & e & 4 & 5 & e & 2 & 1 & 3 & 6 & e &  \dots
\end{tabular}
\end{center}
Since the leftmost ball is in box $3$, the integer $2$ is a descent of $X$. 
The other descents of $X$ are $5$ and $6$, so $\localdecr(X)=3$. 
The number of descents of $X$ is equal to 
the number of rows of 
\[\ID(X)={\young(136,2,45)}\]
The penalized length of the length-$3$ increasing sequence of balls $4 \, 5 \, 6$ is $3-1=2$ because there is one empty box between $4$ and $6$. We have $\localincr(X)=3$ because the penalized length of $1 \, 3 \, 6$ is $3$.
\end{example}

\begin{theorem}[Localized Schensted's theorem for BBS configurations]
\label{thm:local Schensted's theorem for BBS}
Suppose $X$ is a BBS configuration with $n$ distinct balls labeled by $[n]$.
\begin{enumerate}
\item 
The statistic $\localincr(X)$ 
is an invariant of the box-ball system, 
that is, 
$\localincr(X)=\localincr(\BB(X))$.

\item 
The statistic $\localdecr(X)$ is 
 also preserved by BBS moves. 
That is,
$\localdecr(X)=\localdecr(\BB(X))$; in other words, the number of rows of $\ID(\BB(X))$  is equal to that of $\ID(X)$. 
\end{enumerate}

In particular, the size of the first row (respectively, column) of the soliton decomposition of the box-ball system is equal to $\localincr(X)$ (respectively,  $\localdecr(X)$). 
If $w$ is a permutation in $S_n$ such that $X=\BB^t(w)$ for some $t$, then $\localincr(X)=\incr(w)$ and $\localdecr(X)=\localdecr(w)$.
\end{theorem}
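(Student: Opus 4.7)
The plan is to combine the invariance result of Lewis--Lyu--Pylyavskyy--Sen (already cited in Remark~\ref{rem:height of ID is an invariant}) with a direct evaluation of $\localincr$ and $\localdecr$ on a steady-state configuration, where their values can be read off from the soliton decomposition, and then to specialize to the permutation case by a simple count of gaps.

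The first step handles the two invariance claims. Part~(2) is immediate: since a descent of $X$ occurs precisely between two consecutive entries that end one increasing run and begin the next, $\localdecr(X)$ equals the number of rows of $\ID(X)$, which is a BBS invariant by \cite[Lemma~2.1]{LLPS19} as recorded in Remark~\ref{rem:height of ID is an invariant}. Part~(1) is the $k=1$ case of the same Greene-type lemma, whose general form tracks the maximal total penalized length of $k$-tuples of disjoint increasing subsequences of balls.

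Next, I would evaluate both statistics on a configuration in steady state. By Proposition~\ref{prop:characterization of steady state} one may choose $t$ large enough that $\BB^t(X)$ is in steady state, so that $\ID(\BB^t(X))$ coincides with the soliton decomposition $\SD$ of the system; its rows, from top to bottom, are the solitons read from rightmost to leftmost with weakly decreasing sizes. In particular the number of rows of $\ID(\BB^t(X))$ equals the number of solitons, which is the size of the first column of $\SD$, and invariance of $\localdecr$ identifies this with $\localdecr(X)$. For $\localincr$, the rightmost soliton is a contiguous increasing block of balls, so its penalized length equals its size, yielding the lower bound $\localincr(\BB^t(X))\geq |\text{rightmost soliton}|$. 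The matching upper bound follows by noting that any increasing subsequence of balls extending beyond a single soliton must absorb all gaps between the solitons it visits; since solitons have weakly increasing sizes from left to right and no ball lies further right than the rightmost soliton, the gap penalty cancels any additional balls gained, forcing equality with the size of the first row of $\SD$.

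Finally, for a permutation $w \in S_n$ viewed as a BBS state, its $n$ balls occupy $n$ consecutive boxes with no interior empty boxes, so every increasing subsequence of balls has penalized length equal to its plain length; thus $\localincr(X)=\incr(w)$. The descents of the infinite sequence $X$ consist of the single descent $e>w(1)$ at the left edge of the block together with the ordinary descents of $w(1)\cdots w(n)$, giving $\localdecr(X)=1+|\{\text{descents of }w\}|=\localdecr(w)$. The hard part will be the upper-bound argument for $\localincr$ on a steady state: carefully bookkeeping gaps as an increasing subsequence crosses soliton boundaries is exactly the content of \cite[Lemma~2.1]{LLPS19}, and although only its $k=1$ specialization is needed here, writing out a clean self-contained proof requires some care with the gap accounting.
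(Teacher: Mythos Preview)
The paper does not supply its own proof of this theorem: it is stated as a special case of \cite[Lemma~2.1]{LLPS19}, with no argument given beyond that citation. Your proposal is therefore not competing against a paper proof but rather expanding on what the paper leaves implicit. For the invariance claims (1) and (2) you, like the paper, defer to \cite{LLPS19}; your added contributions are the explicit identification of $\localincr$ and $\localdecr$ with the first row and column of $\SD$ via a steady-state computation, and the clean specialization to permutations. Both of these additions are correct and straightforward: the $\localdecr$ identification is immediate from $\ID=\SD$ at steady state, and the permutation case follows exactly as you say since a permutation configuration has no interior empty boxes.

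The one genuinely nontrivial step is the upper bound $\localincr(\BB^t(X))\le |\text{first row of }\SD|$ at steady state. Your sketch (``the gap penalty cancels any additional balls gained'') is morally right but not yet a proof: when adjacent solitons have equal lengths the gap between them need not grow with $t$, so one cannot simply take $t$ large and appeal to huge gaps; one must instead use the standard-skew-tableau structure of $\CA(\BB^t(X))$ from Proposition~\ref{prop:t=0 generalization} to bound, soliton by soliton, how many balls an increasing subsequence can pick up without incurring an offsetting penalty. You correctly flag this as the hard part and point back to \cite[Lemma~2.1]{LLPS19}, which is consistent with the paper's own treatment, but be aware that your write-up as it stands does not make this step self-contained.
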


\section{Recording tableau determines BBS dynamics}

The recording tableau completely determines the BBS dynamics of a permutation, in the following sense (Proposition~\ref{lem:Q determines dynamics of BBS}): if two permutations have the same 
$\Q$-tableau, 
all 
BBS configurations in the two corresponding box-ball systems are identical if we remove the ball labels. 
We prove 
that the recording tableau determines the steady-state time (Theorem~\ref{thm:Q determines steady state})  
and the BBS soliton partition (Theorem~\ref{thm:Q determines BBS partition}) of a permutation. 
We also define the notion of a \emph{BBS good} permutation and prove that the 
$\Q$-tableau 
determines whether or not a permutation is good (Theorem~\ref{thm:Q is either good or bad}).

\subsection{Dual Knuth equivalence}
 \label{sec:dual knuth}
We review the concept of dual Knuth equivalence, which was introduced by Haiman~\cite{Hai92}.

Two permutations $\pi, \omega \in S_n$ \emph{differ by a dual Knuth relation of the first kind} (denoted $\pi \overset{K_1^*}{\sim} \omega$), if for some $k$, 
\begin{align*}
\pi &= \ldots \textcolor{red}{\bf k+1} \ldots \textcolor{blue}{\bf k} \ldots \textcolor{forGreen}{\bf k+2}\ldots
\text{ and }
\\
\omega &= \ldots \textcolor{forGreen}{\bf k+2} \ldots \textcolor{blue}{\bf k} \ldots \textcolor{red}{\bf k+1} \ldots .
\end{align*}
\text{ or vice versa.}
They \emph{differ by a dual Knuth relation of the second kind} (denoted $\pi \overset{K_2^*}{\sim} \omega$), if for some $k$,   
\begin{align*}
\pi =  \ldots \textcolor{red}{\bf k} \ldots &\textcolor{purple}{\bf k+2} \ldots \textcolor{forGreen}{\bf k +1}\ldots  
\text{ and }
\\ 
\omega = \ldots \textcolor{forGreen}{\bf k+1} \ldots &\textcolor{purple}{\bf k+2} \ldots \textcolor{red}{\bf k}\ldots \end{align*}
\text{ or vice versa.}

The two permutations are dual Knuth equivalent  
if there is a sequence of permutations such that 
$$\pi =\pi_1  \overset{K_{i_1}^*}{\sim} \pi_2 \overset{K_{i_2}^*}{\sim}
\cdots \overset{K_{i_{k-1}}^*}{\sim} \pi_k=\omega$$

\noindent where $i_1, i_2, \dotsc, i_{k-1} \in \{1,2\}$.

Two permutations $\pi, \omega \in S_n$ are said to be \textit{$\Q$-equivalent} 
if $\Q(\pi) = \Q(\omega)$. 
A very helpful fact from  {\cite[Proposition 2.4]{Hai92}} tells us that 
the dual Knuth equivalence classes and $\Q$-equivalence classes coincide. 

\begin{theorem}
$
\Q(\pi) = \Q(\omega)$
 iff $\pi$ and $\omega$ are dual Knuth equivalent.
\end{theorem}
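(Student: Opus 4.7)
The plan is to reduce the statement to Knuth's theorem (that ordinary Knuth equivalence classes coincide with $P$-equivalence classes) by exploiting the symmetry of the Robinson--Schensted correspondence, $P(w^{-1}) = \Q(w)$ and $\Q(w^{-1}) = P(w)$. Under this symmetry, $\Q$-equality of two permutations becomes $P$-equality of their inverses, so it suffices to match ordinary Knuth equivalence of the inverses with dual Knuth equivalence of the originals.

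First I would prove the key combinatorial lemma that the inversion map $w \mapsto w^{-1}$ sends dual Knuth equivalence to ordinary Knuth equivalence. Suppose $\pi \overset{K_1^*}{\sim} \omega$, so for some $k$ the values $k{+}1, k, k{+}2$ occur in positions $a < b < c$ in $\pi$ while $k{+}2, k, k{+}1$ occur in these same positions in $\omega$. Reading the values at positions $k, k{+}1, k{+}2$ in $\pi^{-1}$ and $\omega^{-1}$ yields the consecutive triples $(b,a,c)$ and $(b,c,a)$ respectively, which differ by an elementary (ordinary) Knuth relation of type $2$ because $a<b<c$. A symmetric check for $\pi \overset{K_2^*}{\sim} \omega$ produces a Knuth relation of type $1$ between $\pi^{-1}$ and $\omega^{-1}$. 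Since inversion is an involution on $S_n$, iterating an entire chain of dual Knuth moves on $\pi$ gives a chain of ordinary Knuth moves on $\pi^{-1}$, and vice versa; hence
\[
\pi \text{ and } \omega \text{ are dual Knuth equivalent} \iff \pi^{-1} \text{ and } \omega^{-1} \text{ are Knuth equivalent}.
\]

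Second, I would chain together the standard facts. By Knuth's theorem, $\pi^{-1}$ and $\omega^{-1}$ are Knuth equivalent iff $P(\pi^{-1}) = P(\omega^{-1})$. By the RS symmetry $P(w^{-1}) = \Q(w)$, this is equivalent to $\Q(\pi) = \Q(\omega)$. Combining with the lemma above gives the desired biconditional.

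The main obstacle is the case check in the first step. There are two types of dual Knuth relations and two types of ordinary Knuth relations, and the correspondence through inversion swaps ``what is permuted" (values versus positions), so one must be careful to verify that each type lands on the correct partner. This is not deep, but it is the one place a brute-force check is unavoidable; the rest of the argument is formal.
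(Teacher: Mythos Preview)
Your argument is correct and is the standard route: pass to inverses, invoke Knuth's theorem that ordinary Knuth equivalence coincides with $\P$-equivalence, and use the RS symmetry $\P(w^{-1})=\Q(w)$. The case check in your first step is fine; for $K_1^*$ the inverses carry the consecutive triple $bac \leftrightarrow bca$ (with $a<b<c$) and for $K_2^*$ they carry $acb \leftrightarrow cab$, both elementary Knuth moves.

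Note, however, that the paper does not supply its own proof of this theorem at all: it is quoted as \cite[Proposition~2.4]{Hai92} and used as a black box. So there is no ``paper's own proof'' to compare against; your write-up would serve as a self-contained justification where the paper simply cites the literature.
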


\subsection{BBS moves preserve dual Knuth equivalence}
Recall that a BBS configuration
is a sequence indexed by $\mathbb{Z}$ where each of $1,\dots,n$ 
(denoting balls) appears exactly once, and $e \coloneqq n+1$ (denoting empty boxes) appears infinitely many times. 
We extend the definition 
of dual Knuth relation to BBS configurations but insist that the two entries being swapped must be balls.  
Let $\BB(X)$ denote the result of applying one BBS move to a BBS configuration $X$.

The following lemma is key to proving the results of this section.

\begin{lemma}
\label{lem:bbs preserves knuth}
Let $X$ and $Y$ be two BBS configurations.  

\begin{enumerate}
\item 
Suppose $X$ and $Y$ differ by a dual Knuth relation of the first kind, 
say, 
\[ X= \dots \textcolor{red}{\bf k+1} \dots \textcolor{blue}{\bf k} \dots \textcolor{forGreen}{\bf k+2} \dots, ~ Y= \dots \textcolor{forGreen}{\bf k+2} \dots \textcolor{blue}{\bf k} \dots \textcolor{red}{\bf k+1} \dots,  \]
where $\textcolor{forGreen}{\bf k+2}$ represents a ball (as opposed to an empty box $e=n+1$).
Then $\BB(X)$ and $\BB(Y)$ also differ by a dual Knuth relation of the first kind such that the relative order of the balls $\textcolor{blue}{\bf k}$, $\textcolor{red}{\bf k+1}$, and $\textcolor{forGreen}{\bf k+2}$ are preserved:
\[ \BB(X)= \dots \textcolor{red}{\bf k+1} \dots \textcolor{blue}{\bf k} \dots \textcolor{forGreen}{\bf k+2} \dots, ~ \BB(Y)= \dots \textcolor{forGreen}{\bf k+2} \dots \textcolor{blue}{\bf k} \dots \textcolor{red}{\bf k+1} \dots.  \]

\item 
Suppose $X$ and $Y$ differ by a dual Knuth relation of the second kind, 
say,
\[ X= \dots \textcolor{red}{\bf k} \dots \textcolor{purple}{\bf k+2} \dots \textcolor{forGreen}{\bf k+1} \dots, 
~ 
Y= \dots \textcolor{forGreen}{\bf k+1} \dots \textcolor{purple}{\bf k+2} \dots \textcolor{red}{\bf k} \dots,  \]
where $\textcolor{purple}{\bf k+2}$ represents a ball (as opposed to an empty box).
Then $\BB(X)$ and $\BB(Y)$ differ by a dual Knuth relation of \emph{some} kind, 
and $\BB(X)$ and $\BB(Y)$ differ by swapping either $\textcolor{red}{\bf k},\textcolor{forGreen}{\bf k+1}$ 
or $\textcolor{forGreen}{\bf k+1}, \textcolor{purple}{\bf k+2}$.
During the BBS move, consider the situation immediately after the balls
$1, 2, \dots, k-1$ have finished jumping.
 
\begin{enumerate}
\item 
If, immediately after all balls smaller than $\textcolor{red}{\bf k}$ have jumped, there is at least one empty box between $\textcolor{red}{\bf k}$ and $\textcolor{forGreen}{\bf k+1}$, then 
\[ \BB(X)= \dots \textcolor{red}{\bf k} \dots \textcolor{purple}{\bf k+2} \dots \textcolor{forGreen}{\bf k+1} \dots,
~ 
\BB(Y)= \dots \textcolor{forGreen}{\bf k+1} \dots \textcolor{purple}{\bf k+2} \dots \textcolor{red}{\bf k} \dots,  \]
that is, 
$\BB(X)$ and $\BB(Y)$ also differ by a dual Knuth relation of the second kind, and the relative order of the balls $\textcolor{red}{\bf k}$, $\textcolor{forGreen}{\bf k+1}$, and $\textcolor{purple}{\bf k+2}$ are the same as in $X$ and $Y$, respectively.

\item \label{lem:bbs preserves knuth:itm:second kind:case no empty box}
If, immediately after all balls smaller than $\textcolor{red}{\bf k}$ have jumped, there are no empty boxes between $\textcolor{red}{\bf k}$ and $\textcolor{forGreen}{\bf k+1}$,  
then 
\[ \BB(X)= \dots \textcolor{purple}{\bf k+2} \dots \textcolor{red}{\bf k} \dots \textcolor{forGreen}{\bf k+1} \dots,
~ 
\BB(Y)= \dots \textcolor{forGreen}{\bf k+1} \dots \textcolor{red}{\bf k} \dots \textcolor{purple}{\bf k+2} \dots,  \]
that is, $\BB(X)$ and $\BB(Y)$ differ by a dual Knuth relation of the \emph{first} kind.
\end{enumerate}
\end{enumerate}
In both cases, 
the positions of the nonempty boxes in $\BB(X)$ and $\BB(Y)$ are the same.
\end{lemma}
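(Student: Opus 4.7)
The plan is to trace the BBS move step by step, exploiting the fact that balls whose labels are not among those being swapped cannot distinguish $X$ from $Y$ and will move identically whenever the set of occupied positions (ignoring labels) coincides in the two configurations. Since $X$ and $Y$ differ only in the labels at two positions $a<c$ that are occupied in both, the occupied-position sets start out equal. Therefore balls $1,\ldots,k-1$ jump identically in $X$ and $Y$, and the occupied-position sets stay equal after each of their jumps. The analysis then splits into the two cases of the lemma.

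For the first kind, ball $k$ sits at a common middle position $b\in(a,c)$ in both $X$ and $Y$, so it jumps to a common new position $b'$; in particular $b$ is empty and lies strictly between $a$ and $c$ at the moment ball $k+1$ is about to jump. I would then verify two claims: (i) ball $k+1$ at $a$ in $X$ and ball $k+2$ at $a$ in $Y$ land at the same position, namely the nearest empty box to the right of $a$, which must lie in $(a,b]$ since $b$ is empty; and (ii) ball $k+2$ at $c$ in $X$ and ball $k+1$ at $c$ in $Y$ land at the same position past $c$, because the intermediate jump out of $a$ only affects positions $\le b<c$ and leaves the search past $c$ unchanged. After step $k+2$ the occupied-position sets agree again with labels $k+1$ and $k+2$ swapped at the two landing positions, and the remaining balls $k+3,\ldots,n$ jump identically, completing the argument.

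For the second kind the analysis is parallel but more delicate because the shared ball is $k+2$ at $b$ and it jumps third, so the jumps out of $a$ and $c$ happen first, with different labels. I would split on whether some empty position $p$ lies in $(a,c)\setminus\{b\}$ right after balls $1,\ldots,k-1$ have finished. In subcase (a), the two jumps out of $a$ (ball $k$ in $X$, ball $k+1$ in $Y$) both terminate at this nearest $p\in(a,c)$, while the two jumps out of $c$ (ball $k$ in $Y$, ball $k+1$ in $X$) land at a common position past $c$; ball $k+2$ then jumps identically, so labels $k$ and $k+1$ end up swapped at two positions and the second kind relation survives. In subcase (b), all of $(a,c)\setminus\{b\}$ is occupied, so ball $k$ at $a$ in $X$ cannot stop before passing $c$ and lands at the same $c^*>c$ as ball $k$ at $c$ in $Y$; the occupied-position sets coincide immediately. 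Then ball $k+1$ in $X$ leaves $c$ and jumps to the next empty past $c^*$, call it $c^{**}$, while ball $k+1$ in $Y$ leaves $a$ and drops into the just-vacated $c$; ball $k+2$ at $b$ in $X$ finds $c$ free and takes it, whereas in $Y$ it is pushed all the way to $c^{**}$. Comparing the end states gives ball $k$ common at $c^*$ with labels $k+1,k+2$ swapped at $c$ and $c^{**}$, which, combined with $k$ sandwiched between them, is exactly a first kind relation.

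The main obstacle is subcase (b), where the type of the Knuth relation flips: it requires careful bookkeeping of which position becomes empty at which intermediate stage, and one must verify that the forced cascade through $c^*$, $c$, and $c^{**}$ produces exactly a first kind relation (and not, say, some non-Knuth configuration) and that positions of balls $k+3,\ldots,n$ do not interfere with the three key landing spots. Once (b) is settled, the fact that the occupied-position sets coincide after step $k+2$ in every case hands us the final claim about positions of nonempty boxes in $\BB(X)$ and $\BB(Y)$ essentially for free.
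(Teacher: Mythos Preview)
Your plan is sound, and in fact the paper states this lemma without proof (it only provides the illustrative example that follows), so there is nothing to compare against. Your step-by-step tracking of the BBS move, organized around the invariant that the occupied-position sets coincide after each intermediate jump, is exactly the right idea and handles all cases correctly.

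A few small points worth tightening when you write it out in full. In part~(1), your claim~(i) and claim~(ii) are in opposite temporal orders in $X$ and $Y$ (in $X$ the jump from $a$ happens before the jump from $c$, in $Y$ the reverse), so you should state explicitly why the extra intermediate jump does not disturb the landing position: in each case the interfering jump only touches positions that are either all $\le b$ or all $>c$, while the target search is confined to the other side. You do say this, but it deserves to be the centerpiece of the argument rather than a parenthetical. In subcase~(a) of part~(2), you should also check that the landing spot $r$ of ball $k+2$ satisfies $p<r<q$, so that the pattern really is $k,\,k+2,\,k+1$; this follows because $r>b\ge p$ when $p<b$, while if $p>b$ then positions $b+1,\dots,p$ are all occupied after step $k+1$, forcing $r>p$; and $r\le c<q$ since $c$ is vacated. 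In subcase~(b) your bookkeeping is correct as written; the only thing to add is that the balls $k+3,\dots,n$ sit in identical positions in $X$ and $Y$ throughout steps $k,k+1,k+2$ (they have not moved yet), so there is no interference, and once the occupied-position sets realign after step $k+2$ the rest is automatic.
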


\begin{example}
To illustrate case~\eqref{lem:bbs preserves knuth:itm:second kind:case no empty box} of Lemma~\ref{lem:bbs preserves knuth}, 
consider 
the BBS configurations
\[
X=45\mathbf{\textcolor{red}{1}}\mathbf{\textcolor{purple}{3}}6\mathbf{\textcolor{forGreen}{2}} 
\text{ and } Y=45\mathbf{\textcolor{forGreen}{2}}\mathbf{\textcolor{purple}{3}}6\mathbf{\textcolor{red}{1}}.
\]
They differ by a dual Knuth relation of the \emph{second} 
kind where $k=1$, and  
there is no empty box between $k=1$ and $k+2=3$. 
We have
\begin{align*}
    & \BB(X)=45e\mathbf{\textcolor{purple}{3}}\mathbf{\textcolor{red}{1}}\mathbf{\textcolor{forGreen}{2}}6 \text{ and } 
    \BB(Y)=45e\mathbf{\textcolor{forGreen}{2}}\mathbf{\textcolor{red}{1}}\mathbf{\textcolor{purple}{3}}6.
\end{align*}
Indeed, $\BB(X)$ and $\BB(Y)$ differ by a dual Knuth relation of the \textit{first} kind.
\end{example}

\begin{lemma}
\label{lem:X is in steady state iff Y is in steady state}
\label{lem:dual knuth relation implies identical configuration array}
Suppose $X$ and $Y$ are two BBS configurations that differ by a dual Knuth relation;  
the two configurations are identical except that two balls $j$ and $j+1$ are swapped. 
\begin{enumerate}
\item \label{itm:lem:X is in steady state iff Y is in steady state}
Then $X$ is in steady state iff $Y$ is in steady state.
\item \label{itm:lem:dual knuth relation implies identical configuration array}
We have $\sh\ID(X) = \sh\ID(Y)$, and the gaps between the increasing runs are the same.
Equivalently, the configuration arrays $\CA(X)$ and $\CA(Y)$ have the same shape.
\end{enumerate}
\end{lemma}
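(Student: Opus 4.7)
The plan is to establish part~(2) via the descent set and then deduce part~(1) from Proposition~\ref{prop:t=0 generalization}.

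For part~(2), the key input is that both kinds of dual Knuth relations place a third ball ($k$ in the first kind, $k+2$ in the second) strictly between the swapped balls $j$ and $j+1$, so their positions $p,q$ satisfy $|p-q|\geq 2$. I would show that $X$ and $Y$ have the same descent set by inspecting the only four positions $t\in\{p-1,p,q-1,q\}$ at which a descent could possibly change under the swap; in each case, the consecutiveness of $j,j+1$ combined with the non-adjacency $|p-q|\geq 2$ and the uniqueness of each ball label forces the descent status at $t$ to be preserved. Since run boundaries are determined by descents and empty-box positions, and the empty boxes do not move under the swap, the runs of $X$ and $Y$ sit at exactly the same positions. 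This gives $\sh \ID(X) = \sh \ID(Y)$ with equal gaps, which is precisely $\sh \CA(X) = \sh \CA(Y)$.

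For part~(1), I would apply Proposition~\ref{prop:t=0 generalization}: $X$ is in steady state iff $\CA(X)$ is a standard (possibly disconnected) skew tableau with weakly decreasing rows. The weakly decreasing condition is shape-only, hence the same for $X$ and $Y$ by part~(2), so it suffices to show that column-increasing holds for $\CA(X)$ iff it holds for $\CA(Y)$. The two arrays differ in exactly two cells, in which the values $j$ and $j+1$ are swapped. I would case-split on whether these two cells lie in the same column of $\CA$. In the different-column case, each affected column contains only one of $j,j+1$, the remaining entries of that column avoid $\{j,j+1\}$, and the inequalities ``entry above $<j$'' and ``entry below $>j$, hence $\geq j+2$'' in $\CA(X)$ translate directly to the corresponding inequalities around $j+1$ in $\CA(Y)$, and vice versa; column-increasing therefore transfers between the two arrays. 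In the same-column case, I would exclude the scenario whenever $\CA(X)$ is standard: in both kinds of dual Knuth, $j+1$ lies at a strictly larger BBS position than $j$ and hence in a run strictly to the right of $j$'s run, which under the paper's right-to-left reading convention means a strictly shallower row of $\CA$. If the two cells shared a column, $j+1$ would sit above $j$, violating column-increasing since $j+1>j$. Thus $\CA(X)$ standard forces the different-column case, giving $\CA(Y)$ standard; the reverse implication is symmetric in $X\leftrightarrow Y$.

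The step I expect to be the most delicate is the same-column case, where one must translate positional facts in the BBS into row/column facts in $\CA$: namely, that $j$ and $j+1$ genuinely inhabit different runs of $X$ (using $|p-q|\geq 2$, since two consecutive integers in a common increasing run must occupy adjacent positions), and that ``further right in the BBS'' corresponds to ``shallower in $\CA$''. Once this dictionary is pinned down, both case distinctions are clean.
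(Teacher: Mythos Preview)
Your argument for part~(2) is correct and in fact more explicit than the paper's: the paper simply observes that $j$ and $j{+}1$ lie in different increasing runs (because $j{-}1$ or $j{+}2$ sits between them) and concludes that swapping them leaves $\CA$ unchanged in shape, whereas you verify the preservation of the descent set position by position.

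Your argument for part~(1) has a genuine gap in the same-column case. You assert that ``in both kinds of dual Knuth, $j{+}1$ lies at a strictly larger BBS position than $j$,'' but this holds for only one of the two configurations in each dual Knuth pair. In the first kind, with $\pi = \ldots(k{+}1)\ldots k\ldots(k{+}2)\ldots$ and $\omega = \ldots(k{+}2)\ldots k\ldots(k{+}1)\ldots$, the swapped balls are $j=k{+}1$ and $j{+}1=k{+}2$; in $\pi$ the ball $j{+}1$ is to the right of $j$, but in $\omega$ it is $j$ that lies to the right. The lemma is symmetric in $X$ and $Y$, so you must also treat the case where $X$ plays the role of $\omega$. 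There, if $\CA(X)$ is standard and $j,\,j{+}1$ share a column, column-increasing puts $j$ \emph{above} $j{+}1$, which is perfectly consistent---your argument produces no contradiction. Your appeal to ``symmetry in $X\leftrightarrow Y$'' does not rescue this: swapping the labels does not change which configuration has $j{+}1$ on the right, so you have only proved one direction of the biconditional.

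The paper closes this gap by using the \emph{middle} ball rather than the relative order of $j$ and $j{+}1$. If $\CA(X)$ is standard with $j$ directly above $j{+}1$ in column $c$ (rows $r$ and $r{+}1$), the middle ball $j{-}1$ or $j{+}2$ must lie in one of those two runs; tracking where it can sit forces either a cell at $(r{+}1,c{-}1)$ with value strictly between $j{-}1$ and $j{+}1$, or a cell at $(r,c{+}1)$ with value strictly between $j$ and $j{+}2$, each of which is impossible. The resulting shape constraint then contradicts either the skew-shape condition or the weakly-decreasing-row-length condition. This argument is insensitive to the orientation of $j$ versus $j{+}1$, so it handles both directions at once.
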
 
\begin{proof}
Since $X$ and $Y$ differ by a dual Knuth relation, 
the ball $j-1$ or the ball $j+2$ (possibly both) is located between $j$ and $j+1$ in both $X$ and $Y$.
This tells us that $j$ and $j+1$ are not in the same row of $\CA(X)$, since each row of $\CA(X)$ is an increasing run of $w$.

\eqref{itm:lem:X is in steady state iff Y is in steady state}
Suppose $X$ is in steady state. 
Then its configuration array $\CA(X)$ is a standard skew tableau whose rows are weakly decreasing in length by
Proposition~\ref{prop:t=0 generalization}. This requirement, combined with the fact that the ball $j-1$ or the ball $j+2$ is located between $j$ and $j+1$ in $X$, 
tells us $j$ and $j+1$ cannot be in the same column of $\CA(X)$. 
Since $j$ and $j+1$ are also not in the same row of $\CA(X)$,  swapping $j$ and $j+1$ in $\CA(X)$ would result in another standard skew tableau of the same shape. 
This standard skew tableau is $\CA(Y)$, so $Y$ is in steady state by Proposition~\ref{prop:t=0 generalization}.

\eqref{itm:lem:dual knuth relation implies identical configuration array}
Since $j$ and $j+1$ are not in the same row of $\CA(X)$, 
swapping $j$ and $j+1$ in $\CA(X)$ would result in another configuration array with the same shape as $\CA(X)$.
This new configuration array is $\CA(Y)$.
So $\sh\ID(X) = \sh\ID(Y)$, and the gaps between the increasing runs are the same.
\end{proof}

\subsection{Q determines the steady-state time of a permutation}
We now prove 
that the recording tableau determines the steady-state time of a permutation.
Let $\BB^t(X)$ denote the result of applying $t$ BBS moves to a BBS configuration $X$.

\begin{theorem}
\label{thm:Q determines steady state}
If $\Q(\pi) = \Q(\omega)$ then $\pi$ and $\omega$ have the same steady-state time.
\end{theorem}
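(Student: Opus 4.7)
The plan is to reduce steady-state time equality to a statement about dual Knuth equivalence and then push it through BBS moves using the lemmas already established in this section.

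First, I would invoke Haiman's theorem (stated just before the subsection on BBS preserving dual Knuth equivalence) to convert the hypothesis $\Q(\pi) = \Q(\omega)$ into the statement that $\pi$ and $\omega$ are dual Knuth equivalent. Thus there is a chain of permutations
\[
\pi = \pi_0 \overset{K^*}{\sim} \pi_1 \overset{K^*}{\sim} \cdots \overset{K^*}{\sim} \pi_m = \omega,
\]
where each adjacent pair differs by a single dual Knuth relation (of either the first or second kind). It suffices to show that two permutations differing by a single dual Knuth relation always have the same steady-state time; the full result then follows by transitivity along the chain.

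Next, fix two BBS configurations $X_0$ and $Y_0$ that differ by a single dual Knuth relation, and let $X_t = \BB^t(X_0)$ and $Y_t = \BB^t(Y_0)$. I would induct on $t$ to show that for every $t \geq 0$, the configurations $X_t$ and $Y_t$ also differ by a single dual Knuth relation (of possibly a different kind). The base case $t=0$ is the hypothesis. For the inductive step, I apply Lemma~\ref{lem:bbs preserves knuth}: if $X_t$ and $Y_t$ differ by a dual Knuth relation of the first kind, then so do $X_{t+1}$ and $Y_{t+1}$; if they differ by one of the second kind, then $X_{t+1}$ and $Y_{t+1}$ differ by either a second-kind or a first-kind dual Knuth relation. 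In every case $X_{t+1}$ and $Y_{t+1}$ differ by a single dual Knuth relation, as required. Note that one needs to verify that the entries being swapped are indeed balls, not empty boxes; this is automatic since $X_t, Y_t$ arise from permutations under BBS moves and the swapped entries are labels that were balls in the previous step.

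With this invariant in hand, Lemma~\ref{lem:X is in steady state iff Y is in steady state}\eqref{itm:lem:X is in steady state iff Y is in steady state} immediately gives that $X_t$ is in steady state iff $Y_t$ is in steady state, for every $t \geq 0$. Therefore the smallest $t$ for which $X_t$ is in steady state equals the smallest $t$ for which $Y_t$ is in steady state, i.e., $X_0$ and $Y_0$ have the same steady-state time. Applying this to each consecutive pair $\pi_{i}, \pi_{i+1}$ in the dual Knuth chain finishes the proof.

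The main obstacle is really just the bookkeeping in the inductive step: dual Knuth relations of the second kind can turn into relations of the first kind under a BBS move (case~\eqref{lem:bbs preserves knuth:itm:second kind:case no empty box} of Lemma~\ref{lem:bbs preserves knuth}), so the \emph{kind} of relation is not preserved. What matters, and what the lemma guarantees, is that \emph{some} dual Knuth relation persists; this is exactly the hypothesis needed by Lemma~\ref{lem:X is in steady state iff Y is in steady state}, which is agnostic to the kind. Once that distinction is understood, the argument is a clean two-step reduction.
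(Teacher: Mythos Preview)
Your proposal is correct and follows essentially the same approach as the paper: convert $\Q$-equivalence to a chain of dual Knuth relations, use Lemma~\ref{lem:bbs preserves knuth} to propagate the relation(s) through BBS moves, and then apply Lemma~\ref{lem:X is in steady state iff Y is in steady state}\eqref{itm:lem:X is in steady state iff Y is in steady state} to conclude. The only cosmetic difference is that you first reduce to a single dual Knuth step and then induct on $t$, whereas the paper fixes $t$ and applies the lemmas $l$ times along the full chain; the content is the same.
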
 
\begin{proof} 
Assume $\Q(\pi) = \Q(\omega)$. Then $\pi$ and $\omega$ are related by a sequence of dual Knuth relations corresponding to swapping two balls $l$ times. 
Fix $t$, and let $X=\BB^t(\pi)$ and $Y=\BB^t(w)$. 
By Lemma ~\ref{lem:bbs preserves knuth}, applied $t$ times, we know that $X$ and $Y$ are also related by a sequence of $l$ dual Knuth relations.  
Therefore, Lemma~\ref{lem:X is in steady state iff Y is in steady state}\eqref{itm:lem:X is in steady state iff Y is in steady state}, applied $l$ times, tells us that that $X$ is in steady state iff $Y$ is in steady state. 
So $\pi$ and $\omega$ first reach steady state at the same time.
\end{proof}

\subsection{Q determines the BBS soliton partition of a permutation}

In this section, 
we prove 
Theorem ~\ref{thm:Q determines BBS partition}, which says that the recording tableau determines the 
BBS soliton partition of a permutation.

\begin{proposition}
\label{lem:Q determines dynamics of BBS} 
If $\Q(\pi)=\Q(\omega)$, 
then, 
at every $t$, 
\begin{enumerate}
\item 
the positions of the nonempty boxes in $\BB^t(\pi)$ and $\BB^t(\omega)$ are equal;
\item 
$\sh\ID(\BB^t(\pi)) = \sh\ID(\BB^t(\omega))$, and the gaps between the increasing runs are the same. 
Equivalently, $\CA(\BB^t(\pi))$ and $\CA(\BB^t(\omega))$ are of identical shape for each $t$.
\end{enumerate}
\end{proposition}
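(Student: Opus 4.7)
The plan is to lift the dual Knuth argument used for Theorem~\ref{thm:Q determines steady state} and track the finer data of nonempty-box positions and ID-gaps along the entire dual Knuth path. Since $\Q(\pi) = \Q(\omega)$, Haiman's theorem provides a sequence $\pi = \pi_0, \pi_1, \ldots, \pi_m = \omega$ in $S_n$ with each consecutive pair $\pi_{j-1}, \pi_j$ differing by a single dual Knuth relation. I would prove both claims by induction on $t$, with inductive hypothesis that for every $j \in \{1,\ldots,m\}$, the configurations $\BB^t(\pi_{j-1})$ and $\BB^t(\pi_j)$ differ by a single dual Knuth relation (of some kind) and occupy nonempty boxes at identical positions.

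The base case $t=0$ is immediate: each $\pi_j$ occupies boxes $1,\ldots,n$, and consecutive pairs differ by a dual Knuth relation by choice of the path. For the inductive step, I would apply Lemma~\ref{lem:bbs preserves knuth} with $X = \BB^t(\pi_{j-1})$ and $Y = \BB^t(\pi_j)$; its hypothesis (that the ``third'' element $k+2$ in the relation is a ball rather than an empty box) is automatic because ball labels are conserved by every BBS move, and a dual Knuth relation requires $k+2 \leq n$. The conclusion of the lemma both extends the dual Knuth relation to $\BB^{t+1}(\pi_{j-1}) = \BB(X)$ and $\BB^{t+1}(\pi_j) = \BB(Y)$ and explicitly asserts that their nonempty boxes occupy the same positions, closing the induction.

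Claim (1) then follows by chaining the equalities of nonempty-box positions through $j = 0, 1, \ldots, m$ at the fixed time $t$. For claim (2), at each link of the same chain I would invoke Lemma~\ref{lem:X is in steady state iff Y is in steady state}\eqref{itm:lem:dual knuth relation implies identical configuration array}: since each consecutive pair $\BB^t(\pi_{j-1}), \BB^t(\pi_j)$ differs by a single dual Knuth relation, their increasing-run decompositions share a shape and their run-gaps match. Composing these equalities across the chain yields $\sh\ID(\BB^t(\pi)) = \sh\ID(\BB^t(\omega))$ with matching gaps, equivalently identical shapes for $\CA(\BB^t(\pi))$ and $\CA(\BB^t(\omega))$. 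The only genuine obstacle has already been absorbed into Lemma~\ref{lem:bbs preserves knuth}, whose delicate case is a second-kind relation flipping to a first-kind relation under a BBS move; once that lemma is in hand, the remainder is a routine two-layer induction (on $t$ externally and along the dual Knuth path internally).
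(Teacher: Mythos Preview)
Your proof is correct and follows essentially the same approach as the paper's: fix a dual Knuth path from $\pi$ to $\omega$, use Lemma~\ref{lem:bbs preserves knuth} iteratively in $t$ to propagate the dual Knuth relations (and nonempty-box positions) through the BBS evolution, then apply Lemma~\ref{lem:X is in steady state iff Y is in steady state}\eqref{itm:lem:dual knuth relation implies identical configuration array} across the path at the chosen time. Your write-up is slightly more explicit about the two-layer induction and about why $k+2$ is necessarily a ball, but the argument is the same.
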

\begin{proof} 
Assume $\Q(\pi) = \Q(\omega)$. Then $\pi$ and $\omega$ are related by a sequence of dual Knuth relations corresponding to a sequence of $l$ two-ball swaps. 
We fix $t$, and 
let $X=\BB^t(\pi)$ and $Y=\BB^t(\omega)$.
By Lemma ~\ref{lem:bbs preserves knuth}, applied $t$ times, we know that 
$X$ and $Y$ 
are also related by a sequence of $l$ dual Knuth relations, and the nonempty boxes of $X$ and $Y$ are in the same positions.  
Therefore, Lemma~\ref{lem:dual knuth relation implies identical configuration array}\eqref{itm:lem:dual knuth relation implies identical configuration array}, applied $l$ times, tells us that the configuration arrays 
$\CA(X)$
and
$\CA(Y)$
are of identical shape.
\end{proof}

\begin{theorem}
\label{thm:Q determines BBS partition}
If $\Q(\pi) = \Q(\omega)$ then $\sh \SD(\pi) = \sh \SD(\omega)$.
\end{theorem}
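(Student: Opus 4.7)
The plan is to deduce this theorem as a direct corollary of the two results established immediately before it, namely Theorem~\ref{thm:Q determines steady state} and Proposition~\ref{lem:Q determines dynamics of BBS}. The idea is that $\SD$ is defined as $\ID$ evaluated at steady state, so controlling both (a) \emph{when} steady state occurs and (b) the shape of $\ID$ at every time step is exactly enough to control $\sh\SD$.

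More concretely, I would proceed as follows. First, assume $\Q(\pi)=\Q(\omega)$. By Theorem~\ref{thm:Q determines steady state}, the permutations $\pi$ and $\omega$ share a common steady-state time, call it $t_0$. This means that $\BB^{t_0}(\pi)$ and $\BB^{t_0}(\omega)$ are both in steady state, and consequently, by the remark preceding Proposition~\ref{prop:characterization of steady state}, their increasing run decompositions coincide with their soliton decompositions: $\ID(\BB^{t_0}(\pi))=\SD(\pi)$ and $\ID(\BB^{t_0}(\omega))=\SD(\omega)$.

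Next, I would invoke Proposition~\ref{lem:Q determines dynamics of BBS}(2) at time $t=t_0$, which gives $\sh\ID(\BB^{t_0}(\pi))=\sh\ID(\BB^{t_0}(\omega))$. Combining this equality with the two identifications from the previous step yields $\sh\SD(\pi)=\sh\SD(\omega)$, as desired.

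There is essentially no obstacle here, since all the real work has already been absorbed into the preceding lemmas: Lemma~\ref{lem:bbs preserves knuth} handled the interaction between BBS moves and dual Knuth relations, Lemma~\ref{lem:X is in steady state iff Y is in steady state} propagated shape equality across a single swap, and the $t$-fold iteration was carried out in the proofs of Theorem~\ref{thm:Q determines steady state} and Proposition~\ref{lem:Q determines dynamics of BBS}. The only subtle point worth writing carefully is that one must apply Proposition~\ref{lem:Q determines dynamics of BBS} at the common steady-state time supplied by Theorem~\ref{thm:Q determines steady state}; applying it at an arbitrary $t$ would give only the $\ID$-shape equality, not the $\SD$-shape equality. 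Thus the proof is a two-line synthesis of the two preceding results.
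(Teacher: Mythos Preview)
Your proof is correct and follows essentially the same route as the paper's. One small simplification: the paper does not invoke Theorem~\ref{thm:Q determines steady state} at all; it simply picks any $t$ large enough that \emph{both} $\BB^t(\pi)$ and $\BB^t(\omega)$ are in steady state (such a $t$ exists because every box-ball system eventually reaches steady state, regardless of whether the two steady-state times coincide), and then applies Proposition~\ref{lem:Q determines dynamics of BBS} at that $t$. So your appeal to Theorem~\ref{thm:Q determines steady state} is harmless but unnecessary, and the ``subtle point'' you flag is less delicate than you suggest.
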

\begin{proof} 
Suppose $\Q(\pi)=\Q(\omega)$. 
Let $t$ be such that $\BB^t(\pi)$ and  $\BB^t(\omega)$ are both in steady state.
Proposition~\ref{lem:Q determines dynamics of BBS} tells us that  
$\sh\ID(\BB^t(\pi))=\sh\ID(\BB^t(\omega))$.
Since $\BB^t(\pi)$ and $\BB^t(\omega)$ are in steady state, 
we have $\ID(\BB^t(\pi))=\SD(\pi)$ and  $\ID(\BB^t(\omega))=\SD(\omega)$.
Hence $\sh \SD(\pi) = \sh \SD(\omega)$. 
\end{proof}

\subsection{Good recording tableaux}

In general the soliton decomposition and the RS insertion tableau of a permutation do not coincide. 
However, 
the following shows that 
having a standard soliton decomposition tableau or having a BBS soliton partition which equals the RS partition is enough to guarantee that the soliton decomposition and the RS insertion tableau coincide.

\begin{theorem}[{\cite[Theorem 4.2]{DGGRS23}}]
\label{thm:tfae}
If $w$ is a permutation, then 
the following are equivalent:
\begin{enumerate}
\item 
$\SD(w) = \P(w)$

\item 
$\SD(w)$ is a standard tableau

\item 
The shape of $\SD(w)$ equals the shape of $\P(w)$
\end{enumerate}
\end{theorem}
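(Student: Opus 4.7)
The plan is to establish the cycle of implications $(1) \Rightarrow (2) \Rightarrow (3) \Rightarrow (1)$. The two forward implications $(1) \Rightarrow (2)$ and $(1) \Rightarrow (3)$ are immediate: $\P(w)$ is always a standard Young tableau by the definition of RSK, so if $\SD(w) = \P(w)$ then $\SD(w)$ is standard and they automatically share the same shape.

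For $(3) \Rightarrow (1)$, I would assume $\sh(\SD(w)) = \sh(\P(w)) = \lambda$ and argue by induction on the number of rows of $\lambda$. Theorem~\ref{thmIntro:first soliton created after 1 BBM} tells us that the first row of $\SD(w)$, namely the rightmost soliton, coincides with the first row of $\P(w)$ as a set of entries in the same positions. The strategy is to peel off this common first row and interpret the remaining rows of $\SD(w)$ as the soliton decomposition of a reduced BBS obtained by deleting the rightmost soliton's balls, and the remaining rows of $\P(w)$ as the insertion tableau of the corresponding subpermutation. The inductive step then yields $\SD(w) = \P(w)$.

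For $(2) \Rightarrow (3)$, I would proceed as follows. Suppose $\SD(w)$ is standard, and let $v$ be its row reading word. A classical RSK identity gives $\P(v) = \SD(w)$, so $\sh(\P(v)) = \sh(\SD(w))$. By Remark~\ref{rem:t=0}, the permutation $v$ has steady-state time $0$ and $\SD(v) = \SD(w)$, in particular the same shape. It then suffices to show $\sh(\P(w)) = \sh(\P(v))$. The natural route is to establish that $w$ and $v$ are Knuth equivalent (not merely dual Knuth equivalent), which would immediately give $\P(w) = \P(v) = \SD(w)$ and thus both (1) and (3) at once.

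The main obstacle is exactly this Knuth equivalence. Lemma~\ref{lem:bbs preserves knuth} explains how BBS moves interact with \emph{dual} Knuth equivalence, but what is required for $(2) \Rightarrow (3)$ is the parallel statement for ordinary Knuth equivalence. This is more delicate because BBS moves introduce empty boxes and are not simple permutation transformations; tracking Knuth moves through the dynamics requires a careful case analysis of the internal structure of each BBS move and its effect on the relative order of adjacent balls. A secondary obstacle is the peeling argument in $(3) \Rightarrow (1)$: one must verify that removing the rightmost soliton from a steady-state BBS yields a well-defined subsystem whose soliton decomposition and insertion tableau correspond exactly to the remaining rows, which relies on the rigid-block behavior of solitons in steady state.
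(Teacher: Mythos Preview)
This theorem is not proved in the present paper: it is imported verbatim from \cite[Theorem~4.2]{DGGRS21} and merely cited here, so there is no ``paper's own proof'' to compare against. That said, a few remarks on your plan are in order.

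Your route for $(2)\Rightarrow(1)$ is exactly the right one, and the obstacle you name has a known resolution: a single BBS move preserves the Knuth class, i.e., the insertion tableau. This is essentially Fukuda's carrier-algorithm result (see \cite{Fuk04}) and is what underlies the proof in \cite{DGGRS21}. Concretely, if $u$ is the word obtained by reading a steady-state configuration $\BB^t(w)$ from left to right (ignoring $e$'s), then $u$ is the row reading word of $\SD(w)$ and $\P(u)=\P(w)$. When $\SD(w)$ is standard, $\P(u)=\SD(w)$, giving $(1)$ directly. You do not need to track Knuth moves through the dynamics case by case; the carrier algorithm already encodes row-$1$ insertion, and the rest follows.

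Your peeling strategy for $(3)\Rightarrow(1)$, however, is shakier than you acknowledge. Two commuting statements are needed: that $\P$ of $w$ with the first-row entries removed equals $\P(w)$ minus its first row, and that removing the first-soliton balls from $w$ and then running BBS agrees with running BBS on $w$ and then deleting those balls. Neither is standard, and the second is particularly delicate since the first-soliton balls interact with the others during the move from $t=0$ to $t=1$. The argument in \cite{DGGRS21} avoids this entirely: once one knows $\P(u)=\P(w)$ for the steady-state reading word $u$, the shape hypothesis $\sh\SD(w)=\sh\P(w)=\sh\P(u)$ combined with a Greene-type dominance comparison between $\sh\P(u)$ and the row-length partition of the tabloid $\SD(w)$ forces $\SD(w)$ to be standard, whence $(1)$. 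I would abandon the inductive peeling and use the $\P$-preservation route uniformly. Finally, note that invoking Theorem~\ref{thm:first soliton created after 1 BBM} here is logically fine (its proof does not use Theorem~\ref{thm:tfae}), but it is also unnecessary once you have $\P$-preservation under BBS.
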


We say that a permutation $w$ is 
\emph{BBS good}, or \emph{good} for short, if $\SD(w)$ is a standard tableau.
It turns out that $\Q(w)$ determines whether or not $w$ is good.

\begin{theorem}
\label{thm:Q is either good or bad}
Given a $\Q$-equivalence class, either all permutations in it are good or all of them are not good.
\end{theorem}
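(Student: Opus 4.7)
The plan is to derive this as a short consequence of two results already established in the paper: Theorem~\ref{thm:tfae}, which tells us that $w$ is good iff $\sh \SD(w) = \sh \P(w)$, and Theorem~\ref{thm:Q determines BBS partition}, which tells us that $\Q$-equivalent permutations have soliton decompositions of the same shape. The goodness property depends only on the \emph{shapes} of $\SD(w)$ and $\P(w)$, so it suffices to show that each of these shapes is constant across a $\Q$-equivalence class.

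First I would fix $\pi, \omega \in S_n$ with $\Q(\pi) = \Q(\omega)$ and record two equalities of shapes. Since the RS correspondence produces pairs of tableaux of the same shape, we have
\[
\sh \P(\pi) \;=\; \sh \Q(\pi) \;=\; \sh \Q(\omega) \;=\; \sh \P(\omega).
\]
Next, Theorem~\ref{thm:Q determines BBS partition} gives $\sh \SD(\pi) = \sh \SD(\omega)$. Now suppose $\pi$ is good. By Theorem~\ref{thm:tfae} this means $\sh \SD(\pi) = \sh \P(\pi)$, and combining this with the two equalities above yields
\[
\sh \SD(\omega) \;=\; \sh \SD(\pi) \;=\; \sh \P(\pi) \;=\; \sh \P(\omega),
\]
so by the other direction of Theorem~\ref{thm:tfae}, $\omega$ is also good. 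By symmetry in $\pi$ and $\omega$, either both permutations are good or neither is, which proves the theorem.

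There is no real obstacle here: essentially all the work has been done in the preceding theorems. The only thing to watch for is to use Theorem~\ref{thm:tfae} in both directions (goodness implies equality of shapes, and equality of shapes implies goodness), which is why that theorem is stated as an equivalence rather than just an implication.
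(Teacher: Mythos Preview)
Your proof is correct and essentially identical to the paper's own argument: both assume $\Q(\pi)=\Q(\omega)$ with $\pi$ good, then chain together $\sh\SD(\omega)=\sh\SD(\pi)$ (Theorem~\ref{thm:Q determines BBS partition}), $\sh\SD(\pi)=\sh\P(\pi)$ (Theorem~\ref{thm:tfae}), and $\sh\P(\pi)=\sh\P(\omega)$ (same $\Q$ implies same RS shape), and finish with the reverse direction of Theorem~\ref{thm:tfae}.
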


Therefore, it makes sense to define a standard tableau $T$ to be \emph{BBS good} 
if 
$\Q(\omega) = T$ implies
$\omega$ is good
(equivalently, if $\Q(\omega) = T$ for some $\omega$ which is good).

\begin{proof}[Proof of Theorem~\ref{thm:Q is either good or bad}]
Let $\Q(\pi)=\Q(\omega)$. 
Assume $\pi$ is good, that is, $\SD(\pi)$ is standard.  
Then 
\begin{align*}
\sh \SD(\omega) &= \sh \SD(\pi) \text{ by Theorem~\ref{thm:Q determines BBS partition}}
\\
 &= \sh \P(\pi) \text{ by Theorem~\ref{thm:tfae}} \\
 &= \sh \P(\omega),
\end{align*}
where the last equality is due to $\Q(\pi)=\Q(\omega)$
and the fact that the $\P$-tableau and $\Q$-tableau of a permutation have
 the same shape.
Since $\sh \SD(w)=\sh \P(w)$, Theorem~\ref{thm:tfae} tells us that
$\SD(w)$ is standard and thus
$w$ is good.
\end{proof}

Conjectures about good permutations and good tableaux are given in 
Section~\ref{sec:further directions}.


\section{First soliton is created after one BBS move}
\label{sec:first soliton}

In this section, we prove 
that applying one BBS move to a permutation $w$ is enough to obtain the first (rightmost) soliton of $\SD(w)$, and this first soliton is equal to the first row of $\P(w)$.
Our proof uses the carrier algorithm (explained below) and the localized Schensted's theorem (see Section~\ref{sec:local Schensted's theorem}).

\subsection{Carrier algorithm}

The carrier algorithm is a way to transform a BBS configuration at time $t$ into the configuration at time $t + 1$. 
In the algorithm, we move numbers in and out of a carrier in a way that is similar to the insertion rule of the RS algorithm.
A version of the carrier algorithm was first introduced in \cite{TM97carrier}, and the following definition comes from~\cite[Section 3.3]{Fuk04}.

\begin{definition}[Carrier algorithm]
Let $X$ be a BBS configuration with $n$ balls, that is, $X$ is a sequence indexed by $\mathbb{Z}$ where each of $1,\dots,n$ (denoting balls) appears exactly once, and $e \coloneqq n+1$ (denoting empty boxes) appears infinitely many times. 
Let the length-$n$ sequence $C = \underbracket{\, e \, e \, \dots \,\,}$ be the initial state of the carrier, and let $j$ be the smallest number such that $X(j)\neq e$.
Let $X'$ be a BBS configuration defined as follows. 
\begin{enumerate}[Step 1:]
\item
\label{itm:carrier algorithm:step1} 
\begin{enumerate}
\item 
\label{itm:X(j) is a ball}
If $X(j) < \max  C$, let $y$ be the smallest number in the carrier $C$ greater than $X(j)$.   
Set $X'(j) =y$.  
Remove $y$ out of $C$ and insert $X(j)$ into $C$. 
    
\item \label{itm:X(j) is an empty box}
If $X(j) \geq \max C$, let $m=\min C$.
Set $X'(j) = m$. 
Remove $m$ out of $C$ and insert $X(j)$ into $C$.  
\end{enumerate}

\item
Set $j \coloneqq j+1$.
If $X(k)\neq e$ for some $k \geq j$ or if $C$ still contains balls, repeat Step~\ref{itm:carrier algorithm:step1};  
Otherwise, we are done.

\end{enumerate}
\noindent Let $X'(i)=e$ for the rest of the boxes $i$ which have not been assigned a value.
\end{definition}

Note that, since we have exactly $n$ balls and the carrier can carry $n$ elements, we have
\begin{itemize}
\item 
 $X(j)\neq e$
 iff $C$ has a number greater than $X(j)$, which is case~\eqref{itm:X(j) is a ball} in Step~\ref{itm:carrier algorithm:step1},
 and 
 \item
$X(j) = e$ 
iff $C$ has no number greater than $X(j)$, which is case \eqref{itm:X(j) is an empty box} in Step~\ref{itm:carrier algorithm:step1}.
\end{itemize}

\begin{theorem}
[{\cite[Proposition 3.2]{Fuk04}}]
Running the carrier algorithm once is equivalent to performing a BBS move once.
That is, given a BBS state $X$ at time $t$, the BBS configuration $X'$ we get by performing the carrier algorithm is the state at time $t+1$.
\end{theorem}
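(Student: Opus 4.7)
The plan is to track the carrier's contents position by position and show that the output $X'(j)$ of the carrier algorithm matches the corresponding entry of $\BB(X)$ at every position $j$. For each ball $b \in [n]$, write $p_b$ for its position in $X$ and $p_b'$ for its position in $\BB(X)$. The key invariant is that, after the carrier algorithm has processed positions through $j$, the balls (ignoring the $e$ placeholders) currently in the carrier $C$ form exactly the set
\[
C_j := \{\, b \in [n] : p_b \le j < p_b' \,\},
\]
i.e., the balls ``in transit'' across the gap between positions $j$ and $j+1$ under one BBS move. Granted this invariant, the theorem follows immediately: the entry placed at $X'(j)$ is the unique ball with $p_y' = j$ if one exists, and $e$ otherwise, so $X' = \BB(X)$.

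I would prove the invariant by induction on $j$, scanning left to right. The base case ($j$ less than every $p_b$) is trivial since $C$ contains only $e$'s. For the inductive step, assume the invariant holds after processing position $j-1$ and consider two cases.

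If $X(j) = b$ is a ball, the algorithm adds $b$ to $C$ (correct since $p_b' > j$ always holds, as balls only move right) and ejects $y$, the smallest element of $C$ exceeding $b$, or $y = e$ if no such element exists. One must verify that $y$ is the unique ball with $p_y' = j$, or $y = e$ if no ball lands at $j$: during the BBS move, position $j$ is occupied by ball $b$ until $b$'s turn to jump, so only balls of label strictly larger than $b$ can subsequently land at $j$, and among those, the smallest label reaches $j$ first, which is precisely the smallest element of $C_{j-1}$ greater than $b$. If instead $X(j) = e$, the algorithm ejects $m = \min C$, which by the invariant is the smallest-label ball currently in transit; one then shows $p_m' = j$ because $j$ is the next empty slot past the in-transit balls and, by label-priority, the smallest label claims it.

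The main obstacle is rigorously justifying the ``smallest-label in-transit ball wins the next empty slot'' principle invoked in both cases. I would formalize it by a secondary induction on ball label $b$ within one BBS move: after balls $1, \dots, b$ have moved, their new positions fill, in label order, the leftmost available slots to the right of their starting positions, where ``available'' means either originally empty or vacated by a lower-label ball. This shows that the BBS move's label-priority ordering coincides with the carrier algorithm's ejection rule, closing the outer induction.
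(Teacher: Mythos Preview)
The paper does not prove this statement; it is quoted from \cite{Fuk04} without proof. Your approach via the ``in-transit'' invariant $C_j = \{b \in [n] : p_b \le j < p_b'\}$ is correct and is a standard route to this equivalence.

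That said, the ``main obstacle'' you flag is milder than you suggest, and the secondary induction on ball labels is unnecessary. The set $C_{j-1}$ is \emph{defined} in terms of the BBS landing positions $p_b'$, so membership in $C_{j-1}$ already gives $p_c' \ge j$ for free. To finish the inductive step you only need the reverse inequality $p_y' \le j$ for the candidate ball $y$, and this follows simply by checking that position $j$ is vacant when $y$ takes its turn. In the case $X(j)=e$ with $m = \min C_{j-1}$: any ball $c < m$ landing at $j$ would satisfy $p_c \le j-1 < j = p_c'$, hence $c \in C_{j-1}$, contradicting the minimality of $m$; so $j$ is still empty when $m$ moves, giving $p_m' \le j$ and hence $p_m' = j$. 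In the case $X(j)=b$ with $y$ the least element of $C_{j-1}$ exceeding $b$: position $j$ is held by $b$ until ball $b$ vacates it, and no ball $c$ with $b < c < y$ can then occupy $j$ (such $c$ would have $p_c \le j-1 < j = p_c'$ and hence lie in $C_{j-1}$, contradicting the choice of $y$); so again $j$ is vacant when $y$ moves, and $p_y' = j$. The cases where $C_{j-1}$ is empty, or where no element of $C_{j-1}$ exceeds $b$, are handled by the same reasoning and yield $X'(j)=e=\BB(X)(j)$. This closes the outer induction directly, with no need for a separate analysis of how balls $1,\dots,b$ fill slots.
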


\begin{remark}
\label{rem:carrier algorithm is the same as RS insertion algorithm for row 1}
When we insert a consecutive sequence of balls (for example, when $X$ comes from a permutation), the rule for bumping and inserting numbers into and out of the carrier is the same as the rule for bumping and inserting numbers into and out of the first row of the insertion tableau  during the RS algorithm. 
\end{remark}

\begin{example}
\label{ex:carrier:452361:t=2 to t=3}
We apply the carrier algorithm to the BBS configuration of the box-ball system from Figure~\ref{fig:intro:452361:t=0 to t=4} at time $t=2$
\[ 452ee136 \]
to obtain the configuration at time $t=3$. 
For the purpose of proving the main result of this section, 
it is helpful to break the carrier algorithm into two processes: the first process is to insert into the carrier $C$ all balls and the $e$'s between them. The second process is to ``flush" out all balls from $C$ by 
inserting enough $e$'s into it. 
\begin{multicols}{2}
\begin{minipage}
{0.42\textwidth}
\begin{align*}
\text{\textbf{begin}} &\text{ Process 1:} \text{ insert all balls}\\
&\underbracket{eeeeee}452ee136\\
e&\underbracket{4eeeee}52ee136\\
ee&\underbracket{45eeee}2ee136\\
ee4&\underbracket{25eeee}ee136\\
ee42&\underbracket{5eeeee}e136\\
ee425&\underbracket{eeeeee}136\\
ee425e&\underbracket{1eeeee}36\\
ee425ee&\underbracket{13eeee}6\\
ee425eee&\underbracket{136eee}\\
\text{\textbf{end}} & \text{ Process 1}
\end{align*}
\end{minipage} 

\columnbreak

\begin{minipage} 
{0.42\textwidth}
\begin{align*} 
 \text{\textbf{begin}} \text{ Process 2:} &\text{ flushing process}\\
ee425eee&\underbracket{136eee}\leftarrow e\\
ee425eee1&\underbracket{36eeee}\leftarrow e\\
ee425eee13&\underbracket{6eeeee}\leftarrow e\\
ee425eee136&\underbracket{eeeeee}\\
\text{\textbf{end}} \text{ Process 2} &
\end{align*}
\end{minipage}
\end{multicols}
The sequence 
 $ee425eee136$ to the left of the carrier 
 corresponds to the configuration at time $t=3$ given in Figure~\ref{fig:intro:452361:t=0 to t=4}.
\end{example}


\subsection{First soliton}

We refer to the rightmost soliton in a steady-state configuration as the \emph{first soliton} of the box-ball system. 
The \emph{first soliton} of a permutation $w$ is the first soliton of the box-ball system containing $w$, that is, the first row of $\SD(w)$. 
Let $\Rowone(T)$ denote the  first row of a tableau $T$. 
The following result shows that the first soliton is created after applying at most one BBS move to a permutation $\omega$. 
Furthermore, the first row of the soliton decomposition of $w$ 
is equal to the first row of $\P(w)$.

\begin{theorem}
\label{thm:first soliton created after 1 BBM}
If $\omega$ is a permutation, then we have the following. 
\begin{enumerate}
\item \label{thm:itm:first soliton contains 1}
The first soliton $\Rowone(\SD(w))$ contains the ball $1$.
\item 
\label{thm:itm:first soliton created after 1 BBM}
The first soliton 
$\Rowone(\SD(\omega))$
of $w$ is created after at most one BBS move.
That is, the rightmost increasing run of $\BB^t(w)$ is equal to the first soliton of $w$ for all $t \geq 1$. 
\item 
\label{thm:itm:Rowone(SD(w))=Rowone(P(w))}
$\Rowone(\SD(\omega))=\Rowone(\P(\omega))$.
\end{enumerate}
\end{theorem}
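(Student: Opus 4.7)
My plan is to analyze the first BBS move using the carrier algorithm together with Remark~\ref{rem:carrier algorithm is the same as RS insertion algorithm for row 1}, then propagate to all $t \geq 1$ by induction on $t$, with the key obstacle handled by the invariance of $\incr$ from Theorem~\ref{thm:local Schensted's theorem for BBS}. For the base case I would apply the carrier to the one-line notation of $\omega$. During Process~1 the carrier $C$ always contains at least one empty slot (at step $j \leq n$ it has at most $j - 1 < n$ balls), so $\max C = e = n + 1$ and every insertion is governed by the rule $y = \min\{c \in C : c > X(j)\}$, which is exactly the row-insertion rule for the first row of $\P$. By Remark~\ref{rem:carrier algorithm is the same as RS insertion algorithm for row 1}, the set of balls left in $C$ at the end of Process~1 is $\Rowone(\P(\omega))$; moreover ball $1$ is never bumped (the bumping rule would require $X(j) < 1$), so $1 \in C$. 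Process~2 then flushes these balls in increasing order into the consecutive positions $n+1, \ldots, n+k$, where $k = |\Rowone(\P(\omega))|$. Since the entry at position $n$ is either empty or a ball of value $\geq 2 > 1$, this flushed block is not left-extendable, giving $\Rowone(\BB(\omega)) = \Rowone(\P(\omega))$ as increasing sequences and containing ball $1$.

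For the inductive step of part~(2), suppose $R := \Rowone(\P(\omega))$ is the rightmost increasing run of $\BB^t(\omega)$, occupying consecutive positions $p_1 < \cdots < p_k$ with all positions $> p_k$ empty. Balls jump in order of value, so when $r_j \in R$ takes its turn the positions $p_{j+1}, \ldots, p_k$ are still occupied by the unjumped $r_{j+1}, \ldots, r_k$; hence $r_j$'s first empty box to the right is some position $\geq p_k + 1$. It remains to show that no non-$R$ ball lands past $p_k$ in $\BB^{t+1}(\omega)$; granted this, the $R$-balls settle at $p_k + 1, \ldots, p_k + k$ in value order, so $R$ remains the rightmost increasing run of $\BB^{t+1}(\omega)$.

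The main obstacle is this non-interference claim, which I plan to resolve by contradiction using the invariance of $\incr$. First I would establish a flushing-type observation, mirroring Process~2 of the carrier algorithm: any balls landing at positions $> p_k$ during the BBS move on $\BB^t(\omega)$ must occupy consecutive positions $p_k + 1, p_k + 2, \ldots$ in strictly increasing value order (since balls jump in value order, the first to land past $p_k$ goes to $p_k + 1$, the next to $p_k + 2$, and so on). Since the $R$-balls themselves all land at positions $\geq p_k + 1$, if any non-$R$ ball also landed past $p_k$, the total number of balls past $p_k$ would be $\geq k + 1$, producing a contiguous increasing run in $\BB^{t+1}(\omega)$ of length $\geq k + 1$ and hence of penalized length $\geq k + 1$. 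This forces $\incr(\BB^{t+1}(\omega)) \geq k + 1$, contradicting $\incr(\BB^{t+1}(\omega)) = \incr(\omega) = k$ by Theorem~\ref{thm:local Schensted's theorem for BBS}. Finally, parts~(1) and~(3) follow by choosing $t$ at least the steady-state time of $\omega$: at that point the rightmost increasing run of $\BB^t(\omega)$ is simultaneously $\Rowone(\SD(\omega))$ and $\Rowone(\P(\omega))$, which yields $\Rowone(\SD(\omega)) = \Rowone(\P(\omega))$ and confirms that ball $1$ lies in the first soliton.
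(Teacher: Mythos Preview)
Your proof is correct and takes essentially the same approach as the paper: both use the carrier algorithm (via Remark~\ref{rem:carrier algorithm is the same as RS insertion algorithm for row 1}) to identify the rightmost increasing run of $\BB(\omega)$ with $\Rowone(\P(\omega))$, and both rely on the invariance of $\localincr$ from Theorem~\ref{thm:local Schensted's theorem for BBS} to control the inductive step. The only difference is organizational---the paper shows the containment $a_{t+1}\supseteq a_t$ at each step and applies the length bound once at steady state to force equality, whereas you prove equality at every step by contradiction---but the underlying ideas are identical.
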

\begin{proof}
Let $\omega=w_1 w_2 \dots w_n \in S_n$. 
If $\omega$ has steady-state time $0$ then the first soliton of $\omega$ is already created, so suppose that $\omega$ has steady state time $m \geq 1$.

For each time $t$, let $a_t$ denote the increasing run containing the ball 1 in the BBS configuration $\BB^t(\omega)$. 
We will prove that $\Rowone(\SD(\omega))$ is equal to $a_m = a_{m-1} = \dots = a_2 = a_1$.

We apply the carrier algorithm to $\omega$. 
Insert all the balls $w_1, w_2, \dots, w_n$ of $\omega$ into the carrier, and pause immediately after the last ball $\omega_n$ of $\omega$ is inserted into the carrier. 
Let $c$ denote the sequence of balls which is currently in the carrier.
Since $\omega$ is a permutation,  the rule for bumping and inserting numbers into and out of the carrier is the same as the rule for bumping and inserting numbers into and out of the first row of the insertion tableau during the RS algorithm (see Remark~\ref{rem:carrier algorithm is the same as RS insertion algorithm for row 1}). Therefore, 
we have 
\[
c=\Rowone(\P(\omega)). 
\]
In particular, $c$ is an increasing sequence starting with the value $1$.
When we flush $\text{len}(c)$ copies of $e$ into the carrier to finish the carrier algorithm, the sequence $c$ is the rightmost $\text{len}(c)$ letters of $\BB(\omega)$.
Thus $c$ contains the value $1$ and is the rightmost increasing run of $\BB(\omega)$.
Therefore, $c=a_1$, and so 
\begin{align}
 \nonumber &\text{$a_1$ is the rightmost increasing run of $\BB(\omega)$} 
\end{align}
and
\begin{align}
a_1 = \Rowone(\P(\omega)).\label{eq:a1=rowone(P(w))}
\end{align}
Schensted's theorem (Theorem~\ref{thm:Schensted's theorem}) tells us that the size of 
$\Rowone(\P(\omega))$
is equal to $\incr(\omega)$, the length of a longest increasing subsequence of $\omega$, so 
\begin{equation}
\label{eq:len(a1)=i(w)}
\text{len}(a_1) = \incr(\omega).
\end{equation}

Since $a_1$ is the rightmost increasing run of $\BB(\omega)$ and $a_1$ starts with the value $1$, applying a BBS move to $\BB(\omega)$ will produce a rightmost increasing run containing $a_1$. 
So the increasing run $a_2$ of $\BB^2(\omega)$ containing $1$ is the rightmost increasing run of $\BB^2(\omega)$ and $a_2 \supseteq a_1$. 
By the same reasoning, applying a BBS move to $\BB^2(\omega)$ will produce a rightmost increasing run  containing $a_2$, so the increasing run $a_3$ of $\BB^3(\omega)$ containing $1$ is the rightmost increasing run of $\BB^3(\omega)$ and $a_3 \supseteq a_2$, and so on.
Therefore, the increasing run $a_m$ is the rightmost increasing run of $\BB^m(w)$.
Since $\BB^m(w)$ is in steady state, 
\[\text{
$a_m$ is the first soliton of $w$,} \]
proving part~\eqref{thm:itm:first soliton contains 1}. 
In addition, 
\begin{equation*}
a_m \supseteq \dots \supseteq a_2 \supseteq a_1.
\end{equation*} 
To prove $a_m=a_1$, we proceed to show that $\len(a_t) \leq 
\len(a_1)$ for $t \geq 1$.
First note that the penalized length of $a_t$ is $\len(a_t)$ because $a_t$ is an increasing run. 
Since $\localincr(\BB^t(\omega))$ is defined to be the maximum penalized length over all increasing sequences of balls in $\BB^t(\omega)$, we have 
\begin{align*}
\len(a_t) &\leq \localincr(\BB^t(\omega)) \\
&= \incr(\omega)
\text{ by Theorem~\ref{thm:local Schensted's theorem for BBS}}
\\
& = \len(a_1) \text{ by \eqref{eq:len(a1)=i(w)}}
\end{align*}
Thus, $a_m = a_1$.
Since $a_1$ is the rightmost increasing run of $\BB(\omega)$ and $a_m$ is the first soliton of $w$, this equality concludes the proof of  part~\eqref{thm:itm:first soliton created after 1 BBM} of the theorem.

Finally, we have $a_m=a_1=\Rowone(\P(\omega))$ by~\eqref{eq:a1=rowone(P(w))}, proving part~\eqref{thm:itm:Rowone(SD(w))=Rowone(P(w))}.
\end{proof}

\begin{corollary}
\label{cor:k+1 st soliton created after 1 BBM in special case}
Let $\omega$ be a permutation and suppose that the $k$ rightmost solitons of $\omega$ are already formed.  
Then it takes at most one BBS move to create the ${(k+1)}^{\text{th}}$ 
rightmost soliton of $\omega$. 
\end{corollary}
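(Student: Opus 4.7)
The plan is to extend the carrier-algorithm argument used in the proof of Theorem~\ref{thm:first soliton created after 1 BBM} to the leftover balls sitting to the left of the $k$ rightmost solitons. Let $t_0$ be a time at which the $k$ rightmost solitons $S_1, \dots, S_k$ of $\omega$ are formed in $X_0 := \BB^{t_0}(\omega)$ (with $S_i$ the $i^{\text{th}}$ rightmost), and write schematically $X_0 = L \cdot e^g \cdot S_k \cdot \dots \cdot S_1$, where $L$ is the contiguous block of leftover balls and $g$ is the number of empty boxes between $L$ and $S_k$. The first observation is that during one BBS move each $S_i$ translates rigidly to the right (it is preserved by hypothesis), so no ball of $L$ may jump into a soliton's region without destroying that soliton's increasing-run structure---a contradiction. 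Hence the balls of $L$ evolve independently of $S_1, \dots, S_k$.

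Next, I would run the carrier algorithm on $X_0$ from left to right, processing the block $L$ first. Because $L$ is a consecutive block of balls, the carrier behavior on $L$ mirrors RS first-row insertion exactly as in the proof of Theorem~\ref{thm:first soliton created after 1 BBM} (see Remark~\ref{rem:carrier algorithm is the same as RS insertion algorithm for row 1}): immediately after the last ball of $L$ enters the carrier, the carrier content $c$ equals $\Rowone(\P(\pi))$, where $\pi \in S_{|L|}$ is the permutation obtained from $L$ by the order-preserving relabeling of its balls to $\{1, \dots, |L|\}$. Since $g \geq |c|$, the ensuing empty boxes of the gap flush $c$ out of the carrier in increasing order, producing an increasing run of length $|c|$ in $\BB(X_0)$ immediately to the left of the translated $S_k$.

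By the localized Schensted's theorem (Theorem~\ref{thm:local Schensted's theorem for BBS}) applied to the sub-BBS on $L$, the maximum penalized length of an increasing sequence of leftover balls is $\incr(\pi) = |c|$; since the newly formed increasing run attains this maximum length and all its balls are smaller than those of each $S_i$, the bounding argument used in the proof of Theorem~\ref{thm:first soliton created after 1 BBM} shows that this new run persists under all subsequent BBS moves. It is therefore exactly the $(k+1)^{\text{th}}$ rightmost soliton of $\omega$. The main obstacle is to extract from the hypothesis ``the $k$ rightmost solitons are already formed'' the two structural facts that $L$ is a consecutive block of balls and that $g \geq |\Rowone(\P(\pi))|$; this sub-BBS isolation is the technical crux on which the carrier argument depends, and it is what distinguishes the ``special case'' alluded to in the label of the corollary.
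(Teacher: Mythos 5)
The paper states this corollary without proof, as a direct consequence of Theorem~\ref{thm:first soliton created after 1 BBM}, so what matters is whether your argument is actually complete --- and it is not. The decisive problem is your setup. The hypothesis (as the Remark immediately following the corollary makes clear) is that the configuration is the gapless one-line notation of $\omega$ at time $0$: that is why the statement is restricted to permutations rather than arbitrary BBS configurations. In that setting the leftover block $L$ is indeed contiguous, but your gap satisfies $g=0$, so the step ``since $g \geq |c|$, the ensuing empty boxes of the gap flush $c$ out of the carrier'' has nothing to work with. What actually happens after $L$ is inserted is that the balls of $S_k, \dots, S_1$ enter the carrier while it still holds $c$, and one must show that they eject $c$ in increasing order (this is where the hypothesis that the $S_i$ are already-formed solitons does real work); you never analyze this interaction. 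If instead you take $t_0>0$ to manufacture a gap, you lose contiguity of $L$ --- exactly the failure mode of the paper's counterexample $ee45e2136$, where the leftover balls $4,5,2$ are not contiguous and the second soliton takes two more moves to form. You concede in your last paragraph that extracting ``$L$ contiguous and $g\geq|c|$'' from the hypothesis is the crux you have not done; since one of those two facts is false in the intended setting, the proof cannot be repaired by filling in that step as stated.

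Two further steps are also unjustified or wrong. First, the ``isolation'' claim that $L$ evolves as a standalone sub-BBS is asserted via a one-line contradiction sketch (a ball of $L$ landing in a soliton would ``destroy its increasing-run structure''), but a ball could land adjacent to a soliton and merge with it, and ruling this out is essentially the whole content of the corollary. Second, your persistence argument invokes ``all its balls are smaller than those of each $S_i$,'' which is false: by Theorem~\ref{thm:first soliton created after 1 BBM}\eqref{thm:itm:first soliton contains 1} the ball $1$ lies in $S_1$, so the solitons contain balls smaller than every leftover ball (e.g.\ for $\omega=453126$ one has $S_1=126$ and leftover balls $3,4,5$). The persistence of the new run therefore cannot be inherited verbatim from the proof of Theorem~\ref{thm:first soliton created after 1 BBM}, whose argument leans on the run containing the globally smallest ball.
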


\begin{remark}
Corollary~\ref{cor:k+1 st soliton created after 1 BBM in special case} does not hold if we replace $\omega$ with a BBS configuration that has empty boxes between balls. For example, consider the configuration 
\begin{equation*}
    ee45e2136
\end{equation*}
at time $t=1$ in Figure~\ref{fig:intro:452361:t=0 to t=4}, which is not a permutation. 
In this configuration the first soliton $136$ has already been created.
However, the second soliton $25$ is not created until after two BBS moves later.
\end{remark}


\section{L-shaped soliton decompositions}
\label{sec:L-shaped}

In this section, we prove that permutations with L-shaped soliton decompositions have steady-state time at most 1. 
We also study noncrossing involutions, nested involutions, and column reading words. We prove that these involutions all have L-shaped soliton decompositions and therefore have steady-state time at most $1$.

\subsection{L-shaped soliton decompositions} 
Let $\SST(w)$ denote the steady-state time of a permutation $w$.

\begin{theorem} \label{thm: L-shaped SD SST}
If a permutation $w$ has an L-shaped soliton decomposition, that is, the partition $\sh\SD(w)$ is of the form $(\incr,1,1,\dots)$, then $\SST(w) \leq 1$.
\end{theorem}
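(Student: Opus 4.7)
The plan is to apply the ID characterization of steady state (Proposition~\ref{prop:characterization of steady state}) to $\BB(w)$ and show that $\BB(w)$ is already in steady state, hence $\SST(w) \leq 1$. Set $k := n - \incr(w)$, so the hypothesis $\sh\SD(w) = (\incr(w), 1, 1, \dots, 1)$ has $1+k$ rows; by Theorem~\ref{thm:local Schensted's theorem for BBS}, every $\BB^t(w)$ has exactly $1+k$ increasing runs. Theorem~\ref{thm:first soliton created after 1 BBM} identifies the rightmost run of $\BB(w)$ as the first soliton $S$ of length $\incr(w)$, which accounts for $\incr(w)$ of the $n$ balls. The remaining $k$ balls are distributed among the $k$ other runs, forcing each to be a singleton. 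In particular, $\ID(\BB(w))$ has row lengths $(\incr(w), 1, 1, \dots, 1)$, which is weakly decreasing, so condition~\eqref{lem:characterization of steady state:1} of the characterization holds.

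It remains to verify condition~\eqref{lem:characterization of steady state:2}, namely $\ID(\BB^2(w)) = \ID(\BB(w))$. I would label the singletons of $\BB(w)$ from left to right as $v_1, \dots, v_k$ at positions $p_1 < \dots < p_k$, and let $q$ denote the leftmost position of $S$. The key claim is that, during the BBS move from $\BB(w)$ to $\BB^2(w)$, each singleton $v_i$ moves from $p_i$ to $p_i + 1$, while each soliton ball at position $q + j$ moves to $q + j + \incr(w)$. The plan is to prove this by induction on the ball label $v$ as balls jump in order of label, maintaining the invariant that every ball with smaller label has landed at its predicted new position and no other position has been filled.

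The subtle sub-case is two adjacent singletons with $p_{i+1} = p_i + 1$: since they form separate runs we must have $v_i > v_{i+1}$, so by the jump order $v_{i+1}$ has already vacated $p_i + 1$ by the time $v_i$ jumps. The inductive hypothesis rules out any other ball occupying $p_i + 1$, because singletons with smaller label land at $p_j + 1 \neq p_i + 1$ (for $j \neq i$), and first-soliton balls sit at positions $\geq q \geq p_i + 1$ with equality possible only at the position originally held by $s_1 = 1$, which is empty once ball $1$ has jumped. An analogous verification handles balls in $S$. Once the claim is proved, $\BB^2(w)$ has $S$ as its rightmost run and the singletons $v_1, \dots, v_k$ in the same left-to-right order, so $\ID(\BB^2(w)) = \ID(\BB(w))$, completing the check. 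The main obstacle is executing the induction cleanly: the case analysis splits according to whether the jumping ball lies in $S$ or among the singletons and whether it has a right-neighbor ball in $\BB(w)$, and one must verify in each case that the intended landing position is genuinely empty at the moment of the jump.
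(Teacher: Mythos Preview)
Your proposal is correct, and the first half coincides with the paper: both arguments use Theorem~\ref{thm:first soliton created after 1 BBM} together with the invariance of the number of increasing runs (Theorem~\ref{thm:local Schensted's theorem for BBS}) to conclude that $\ID(\BB(w))$ has shape $(\incr(w),1,\dots,1)$ with the first soliton as its top row.

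The difference lies in how steady state is then certified. You invoke Proposition~\ref{prop:characterization of steady state} (the $\ID$ criterion), which obliges you to show $\ID(\BB^2(w))=\ID(\BB(w))$; this is what forces the ball-by-ball induction and the case analysis on adjacencies that you outline. The paper instead applies Proposition~\ref{prop:t=0 generalization} (the $\CA$ criterion), which is a purely static test on $\BB(w)$: once the singletons $b_h,\dots,b_1$ sit to the left of the first soliton, any two adjacent singletons automatically satisfy $b_{j+1}>b_j$ (else they would merge into a length-$2$ run), and this alone forces $\CA(\BB(w))$ to be a standard skew tableau with weakly decreasing row lengths. No second BBS move needs to be computed. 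So the paper's route is shorter precisely because the $\CA$ characterization absorbs into a one-line structural observation the content that your induction unpacks step by step; your approach, on the other hand, has the minor advantage of making the dynamics of the next move completely explicit.
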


\begin{example}
\label{ex:thm: L-shaped SD SST:5274163}
Let $\pi=5274163$. Applying the first BBS move, we get \[e5e72e4136\] which is in steady state with soliton decomposition 
\[
SD(\pi)=\young(136,4,2,7,5)\]
\end{example}

\begin{proof}[Proof of Theorem \ref{thm: L-shaped SD SST}] 
Let $h \coloneqq n-\incr$, so the number of rows of $\SD(\omega)$ is $1+h$. 
Theorem~\ref{thm:first soliton created after 1 BBM} tells us that the rightmost increasing run of $\BB^{1}(\omega)$
is equal to the first soliton.  
Since the number of increasing runs of a BBS configuration is preserved by a BBS move (Remark~\ref{rem:height of ID is an invariant}), the number of rows of $\ID(\BB^1(w))$ is $1+h$. 
So the shape of $\ID(\BB^1(w))$ is equal to $(\incr, 1,1,\dots)$ and $\BB^1(\omega)$ is of the form 
\[X =
b_h \dots b_{h-1} \dotsc b_2 \dots b_1 \dots ~
\underbrace{1 s_2 s_3 \dotsc}_{\text{first soliton}},
\] 
such that, for each $b_j \in \{b_1, \dots, b_h\}$,  either
\begin{enumerate}
\item 
there is an empty box immediately to the left of $b_j$, or
\item $b_{j+1} b_j$ is a  consecutive, decreasing subsequence of $X$.
\end{enumerate}
Therefore, the configuration array of $\BB^1(w)$ is a standard skew tableau whose row sizes are weakly increasing,   
so $\BB^1(w)$ is in steady state by Proposition~\ref{prop:t=0 generalization}. 
\end{proof}


Next, we point out a characterization of permutations with L-shaped soliton decompositions.

\begin{lemma}
\label{lemma for prop:noncrossing have L-shaped SD}
Suppose $w$ is a  permutation in $S_n$. 
Let $\incr$ denote the length of a longest increasing subsequence of $w$  and $\des$ the number of descents of $w$. 
Then $\SD(w)$ is L-shaped iff $\incr + \des \geq  n$.
In this case, $\SD(w)$ has shape $(\incr,\underbrace{1,1,\dots,1}_{\des~\text{copies}})$.
\end{lemma}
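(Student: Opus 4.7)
The plan is to deduce this lemma almost immediately from the localized Schensted's theorem for permutations (Theorem~\ref{thm:local Schensted's theorem}), which pins down both the first row size and the number of rows of $\sh\SD(w)$. Let $\lambda \coloneqq \sh\SD(w)$. By Theorem~\ref{thm:local Schensted's theorem}, $\lambda_1 = \incr$, and the number of rows of $\lambda$ equals $\localdecr(w) = 1 + \des$. Since $\SD(w)$ contains all $n$ balls, $|\lambda|=n$.

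First I would record the elementary inequality
\[
|\lambda| \;=\; \lambda_1 + \sum_{i \geq 2} \lambda_i \;\geq\; \lambda_1 + (\lambda'_1 - 1),
\]
which follows because each of the rows $\lambda_2,\lambda_3,\dots,\lambda_{\lambda'_1}$ has size at least $1$. Substituting the values from Theorem~\ref{thm:local Schensted's theorem} gives
\[
n \;\geq\; \incr + \des,
\]
with equality if and only if $\lambda_2 = \lambda_3 = \dots = \lambda_{\lambda'_1} = 1$, i.e., if and only if $\lambda$ is L-shaped.

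For the forward implication, assuming $\SD(w)$ is L-shaped means $\lambda = (\incr,1,1,\dots,1)$ with $\lambda'_1 - 1 = \des$ trailing $1$'s; summing gives $n = \incr + \des \geq n$. For the reverse implication, if $\incr + \des \geq n$, then combined with the inequality above we get $\incr + \des = n$, which forces equality throughout, hence $\lambda_i = 1$ for all $i \geq 2$, and so $\lambda$ is L-shaped with shape $(\incr,\underbrace{1,\dots,1}_{\des})$. There is no real obstacle here: the entire argument is a bookkeeping exercise combining Theorem~\ref{thm:local Schensted's theorem} with the trivial bound on partition size, and the shape formula in the second sentence of the lemma falls out of the same computation.
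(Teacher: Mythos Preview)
Your proof is correct and follows essentially the same approach as the paper: both arguments invoke the localized Schensted's theorem to identify $\lambda_1=\incr$ and the number of rows of $\lambda$ as $\des+1$, then observe that $|\lambda|=n$ forces $\incr+\des\leq n$ with equality precisely when $\lambda$ is L-shaped. Your version spells out the partition-size inequality more explicitly, but the substance is the same.
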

\begin{proof}
The localized Schensted's theorem (Theorem~ \ref{thm:local Schensted's theorem}) tells us that the length of the first soliton is $\incr$.
It also tells us that the length of the first column of the soliton decomposition is $\des+1$.
Since $\SD(w)$ has size $n$, it must be that $\SD(w)$ is L-shaped iff $\incr + \des = n$. 
Furthermore, since $\incr + \des \leq n$ holds for all permutations in $S_n$, writing $\incr + \des = n$ is equivalent to writing $\incr + \des \geq n$.
\end{proof}

\subsection{Noncrossing involutions have L-shaped soliton decompositions}

\begin{definition}[Noncrossing involution]
A pair of distinct 2-cycles is called a \emph{crossing} if they can be written as $(ac)$ and $(bd)$ where 
$a<b<c<d$. 
An involution is called \emph{noncrossing} if no pair of 2-cycles is a crossing.
\end{definition}

For example, the involution with cycle notation $(26)(34)(78)$ is noncrossing, but the involution with cycle notation $(24)(36)(78)$ is not noncrossing, since $(24)$ and $(36)$ is a pair of crossing 2-cycles. 
Any 2-cycle is a noncrossing involution, as is the identity permutation.

\begin{proposition}
\label{prop:noncrossing have L-shaped SD}
If $\omega$ is a noncrossing involution, then $\omega$ has an L-shaped soliton decomposition. 
More precisely, let $w$ be a noncrossing involution in $S_n$, let $c$ denote the number of adjacent $2$-cycles of $w$, and let $k$ denote the number of all $2$-cycles of $w$ (including the adjacent $2$-cycles). 
Then the shape of $\SD(w)$ is $(n-2k+c,\underbrace{1,1,\dots,1}_{2k - c~\text{copies}})$.
\end{proposition}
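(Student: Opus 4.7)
The plan is to apply Lemma~\ref{lemma for prop:noncrossing have L-shaped SD}, which says that $\SD(w)$ is L-shaped of shape $(\incr(w), 1^{\des(w)})$ iff $\incr(w) + \des(w) \geq n$. So it suffices to establish
\[
\des(w) = 2k - c \qquad \text{and} \qquad \incr(w) \geq n - 2k + c.
\]
Summing these gives $\incr(w)+\des(w) \geq n$, so the lemma applies and produces an L-shape $(\incr(w), 1^{\des(w)})$. Moreover, $\incr+\des \leq n$ holds for every permutation (since $\SD(w)$ has $n$ boxes, at least $\des+1$ rows by the localized Schensted's theorem, and first row of length $\incr$), so the inequality is in fact an equality, yielding $\incr(w) = n-2k+c$ and the claimed shape $(n-2k+c,\,1^{2k-c})$.

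For the descent count, I would classify each position $i$ as F (fixed point), L (left endpoint of a $2$-cycle, i.e.\ $w(i)>i$), or R (right endpoint, i.e.\ $w(i)<i$), and run through the nine possible types of consecutive pairs $(i,i+1)$. A short case analysis shows a descent occurs precisely in the types F-R, L-F, L-L, L-R, and R-R. Type L-R forces the $2$-cycle $(i,i+1)$ to be adjacent, contributing exactly one descent per adjacent $2$-cycle. For a non-adjacent $2$-cycle $(a,b)$ with $b>a+1$, the noncrossing hypothesis forces $a+1$ to be F or L (an R at $a+1$ paired with some $j<a$ would cross $(a,b)$), producing a descent at position $a$; symmetrically $b-1$ must be F or R (an L at $b-1$ paired with $j'>b-1$ either coincides with $(a,b)$ and forces adjacency, or crosses $(a,b)$), producing a descent at position $b-1$. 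Checking that distinct $2$-cycles yield distinct descent positions then gives $\des(w) = c + 2(k-c) = 2k - c$.

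For the lower bound on $\incr(w)$, I would construct an explicit increasing subsequence consisting of all $n-2k$ fixed points of $w$ together with, for each adjacent $2$-cycle $(i,i+1)$, the position $i$ (whose value is $i+1$). A brief check confirms that after sorting the chosen positions in increasing order the values are also strictly increasing: the essential point is that whenever $(i,i+1)$ is an adjacent $2$-cycle, $i+1$ is not fixed, so any later chosen position $p_2$ satisfies $p_2>i+1$, hence $w(i)=i+1<p_2 \leq w(p_2)$. This yields $\incr(w) \geq (n-2k)+c$, completing the argument.

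The main obstacle is the descent case analysis: one must carefully invoke noncrossing to rule out configurations that would lead to under- or double-counting, most notably showing that position $b-1$ cannot be the left endpoint of another $2$-cycle when $(a,b)$ is non-adjacent, and verifying that the descent positions $a$ and $b-1$ attributed to different non-adjacent $2$-cycles are always distinct. Once this bookkeeping is set up correctly, the two estimates combine cleanly through the lemma to force equality $\incr+\des=n$ and pin down the exact L-shape.
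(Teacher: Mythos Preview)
Your proposal is correct and follows essentially the same route as the paper: bound $\incr(w)$ below by an explicit increasing subsequence built from the fixed points together with one entry from each adjacent $2$-cycle, bound $\des(w)$ by attributing descents to the $2$-cycles (one per adjacent, two per non-adjacent) using the noncrossing hypothesis, and then invoke Lemma~\ref{lemma for prop:noncrossing have L-shaped SD}. The only differences are cosmetic: you run a full nine-case analysis to pin down $\des(w)=2k-c$ exactly, whereas the paper is content with $\des(w)\geq 2k-c$; and you take the left endpoint of each adjacent $2$-cycle in your increasing subsequence while the paper takes the right endpoint.
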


The following example illustrates the idea of our proof of Proposition~\ref{prop:noncrossing have L-shaped SD}.
\begin{example}
Let $w=164352879=(26)(34)(78)\in S_9$.
First, we construct an increasing subsequence of the one-line notation of $w$. 
Since $w$ has three $2$-cycles, we know that $w$ has exactly $9-2(3)=3$ fixed points: $1$, $5$, and $9$.
These three fixed points form an increasing subsequence of $w$. 
We have two adjacent $2$-cycles $(34)$ and $(78)$, and we can add $3$ and $7$ to $1\,5\,9$ to form an increasing subsequence of $w$ of size five: $1\,3\,5\,7\,9$. So the size of the first soliton of $w$ is at least $5$ by the localized Schensted's theorem (Theorem~ \ref{thm:local Schensted's theorem}).

Next, we look for the descents of $w$.
The one non-adjacent $2$-cycle $(26)$ contributes two descents $2$ and $6-1=5$ to $w$, since $w(2)=6>4=w(3)$ and 
$w(5)=5>2=w(6)$.
The two adjacent $2$-cycles $(34)$ and $(78)$ contribute one descent each to $w$ 
because $w(3)=4>3=w(4)$ and $w(7)=8>7=w(8)$.
We have found at least $4$ descents of $w$.
So the size of the first column is at least $4+1=5$ by the localized Schensted's theorem (Theorem~ \ref{thm:local Schensted's theorem}).

The size of $\SD(w)$ is $9$, so its shape must be $(5,1,1,1,1)$. Indeed, 
\[\SD(w)=\young(12579,8,3,4,6)\]
\end{example}

\begin{proof}[Proof of Proposition~\ref{prop:noncrossing have L-shaped SD}]
Let $w$ be a noncrossing involution in $S_n$ which is not the identity permutation, and let $k\geq 1$ denote the number of all $2$-cycles of $w$.
First, we construct an increasing subsequence of the one-line notation of $\omega$. 

Since the only values changed by $\omega$ are the ones in the $2$-cycles, $\omega$ has $n-2k$ fixed points.
First, consider the case where $n>2k$, so that $w$ indeed has fixed points. 
The $n-2k$ fixed points of $\omega$  form an increasing subsequence $a_1 a_2\dots a_{n-2k}$ of $w$.

Let $c \geq 0$ be the number of adjacent $2$-cycles of $w$, and consider the adjacent $2$-cycles of $w$ 
\[
(i_1, i_1+1),
(i_2, i_2+1),
\dots,
(i_c, i_c+1),
\]
listed from smallest to largest, that is, $i_1 < i_2 < \dots < i_c$.
Note that each of the adjacent $2$-cycles simply swaps $i_j$ and $i_j+1$,  
so $i_1 i_2 \dots i_c$ is an increasing  subsequence of $w$.
Furthermore, if $w$ has fixed points we can insert $i_1, i_2, \dots, i_c$ into the increasing subsequence $a_1 a_2 \dots a_{n-2k}$ of $w$ to form a longer subsequence of $w$ of size $n-2k+c$. 
Let $\incr$ denote the size of a longest increasing subsequence of $w$;  we have shown that $\incr \geq n-2k+c$.

Next, let's compute the number of descents of $\omega$. 
First, consider a non-adjacent 2-cycle $(xz)$ where $x+1<z$. 
We claim that 
\begin{center} $x$ is a descent of $w$.
\end{center}
Either $x+1$ is a fixed point or $x+1$ is part of a $2$-cycle.
If $x+1$ is a fixed point, then $w(x+1)=x+1$ and we have $w(x)=z>x+1=w(x+1)$, so $x$ is a descent of $w$. 
If $x+1$ is part of a $2$-cycle 
$(x+1,y)$, 
then $y$ must be smaller than $z$ because $w$ is a noncrossing involution. Therefore, $w(x)=z>y=w(x+1)$, so again $x$ is a descent of $w$.
Using a similar argument, we can show that 
\begin{center}
$z-1$ is a descent of $w$.    
\end{center}

For each adjacent $2$-cycle $(x,x+1)$,
\begin{center}
the number $x$ is a descent of $w$
\end{center}
because $w(x)=x+1>x=w(x+1)$.
In total, we have shown that $w$ has at least $2k-c$ descents. If we let $\des$ denote the number of descents of $w$, we have $\des \geq 2k-c$.

We have shown that $\incr \geq n-2k+c$ and that $\des \geq 2k - c$. 
Since $(n-2k+c) + (2k - c)=n$, we have $\incr + \des \geq n$, so $\SD(w)$ is L-shaped 
with  
 shape $(\incr,\underbrace{1,1,\dots,1}_{\des~\text{copies}})$
by 
Lemma~\ref{lemma for prop:noncrossing have L-shaped SD}.
\end{proof}

\begin{remark}
\label{rem:crossing involution with shape L SD:5274163}
Not all involutions with 
L-shaped soliton decompositions 
are noncrossing involutions. 
For instance, the involution $\pi=5274163=(15)(37)$ from Example~\ref{ex:thm: L-shaped SD SST:5274163}
 has a crossing and has an L-shaped soliton decomposition.
\end{remark}

The following result is a consequence of Theorem \ref{thm: L-shaped SD SST} and Proposition~\ref{prop:noncrossing have L-shaped SD}.
\begin{corollary} 
\label{cor:noncrossing steady state}
All noncrossing involutions have steady-state time at most $1$. 
\end{corollary}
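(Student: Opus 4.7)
The plan is to obtain this corollary as an immediate consequence of the two results immediately preceding it in the section, so no new machinery is needed. Specifically, I would simply chain Proposition~\ref{prop:noncrossing have L-shaped SD} with Theorem~\ref{thm: L-shaped SD SST}.

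More concretely, let $w$ be a noncrossing involution in $S_n$. First I would invoke Proposition~\ref{prop:noncrossing have L-shaped SD} to conclude that $\SD(w)$ is L-shaped; that proposition even records the precise shape $(n - 2k + c, 1, 1, \dots, 1)$ in terms of the number $k$ of $2$-cycles and the number $c$ of adjacent $2$-cycles, but for our purposes only the L-shape is needed. Then I would apply Theorem~\ref{thm: L-shaped SD SST}, which says that any permutation with an L-shaped soliton decomposition has steady-state time at most~$1$, to conclude $\SST(w) \leq 1$.

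Since both ingredients are already proved, there is no real obstacle here; the only thing worth double-checking is the edge case where $w$ is the identity permutation, which has steady-state time~$0$ and a soliton decomposition of shape $(n)$ that is vacuously L-shaped. Thus the bound $\SST(w) \leq 1$ covers every noncrossing involution without exception, and the corollary follows.
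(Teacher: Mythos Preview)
Your proposal is correct and matches the paper's own argument exactly: the paper states this corollary as a direct consequence of Theorem~\ref{thm: L-shaped SD SST} and Proposition~\ref{prop:noncrossing have L-shaped SD}, which is precisely the chaining you describe.
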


Two families of tableaux that correspond to noncrossing involutions are discussed next.


\subsection{Nested involutions have L-shaped soliton decompositions}

\begin{definition}
\label{def:nested} 
A pair of distinct 2-cycles is called a \emph{nesting} if they can be written as 
$(ad)$ and $(bc)$
 where 
$a<b<c<d$.  
An involution is called \emph{nested}
if every pair of 2-cycles is a nesting.
\end{definition}

\begin{example}
Any 2-cycle is a nested involution, as is the identity permutation. 
The involutions $(15)(24)$ and $(17)(25)(34)$ are nested involutions, but $(23)(45)(17)$ is not because the pair $(23)$ and $(45)$ is not a nesting.
\end{example}

\begin{corollary}
If $w$ is a nested involution then $\SST(w) \leq 1$. 
\end{corollary}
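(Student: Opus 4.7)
The plan is to reduce to Theorem~\ref{thm: L-shaped SD SST} by showing every nested involution $w \in S_n$ has an L-shaped soliton decomposition, which by Lemma~\ref{lemma for prop:noncrossing have L-shaped SD} amounts to verifying $\incr(w) + \des(w) \geq n$. The approach closely mirrors the proof of Proposition~\ref{prop:noncrossing have L-shaped SD}, but the nesting hypothesis makes both the descent count and the increasing-subsequence construction cleaner.

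To set up, let $w$ have $k$ two-cycles arranged in the nesting chain $a_1 < a_2 < \cdots < a_k < d_k < \cdots < d_1$. I would first observe that $w$ has at most one adjacent two-cycle: if $(i,i+1)$ and $(j,j+1)$ were both adjacent two-cycles of $w$ with $i < j$, disjointness of the underlying sets forces $i+1 < j$, so the intervals $[i,i+1]$ and $[j,j+1]$ are separated rather than nested, contradicting the hypothesis. Let $c \in \{0,1\}$ record the number of adjacent two-cycles; if $c = 1$, this pair is necessarily the innermost one, with $d_k = a_k + 1$.

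Next I would bound descents. For each two-cycle $(a_i, d_i)$, position $a_i$ is a descent: its value is $d_i$, while the entry at position $a_i + 1$ is either a fixed point with value $a_i + 1 < d_i$, or belongs to a two-cycle nested inside $(a_i, d_i)$ whose values are all strictly less than $d_i$. Symmetrically, position $d_i - 1$ is a descent whenever $d_i - 1 > a_i$. The linearity of the nesting forces any collision $a_i = d_j - 1$ to satisfy $i = j$ with $d_i = a_i + 1$, so the descent positions produced are pairwise distinct except for each adjacent two-cycle, where the two candidates coincide. Hence $\des(w) \geq 2k - c$.

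For the increasing subsequence, I would collect the $n - 2k$ fixed values of $w$ (each fixed value $f$ sits at position $f$, so they automatically form an increasing subsequence in both position and value), and if $c = 1$ additionally insert $a_k$, located at position $a_k + 1 = d_k$. Because no position lies strictly between $a_k$ and $d_k$ in the adjacent case, this new entry slots cleanly between fixed points with value below $a_k$ and those with value above $d_k$, preserving the increasing property. Therefore $\incr(w) \geq n - 2k + c$, and adding the two bounds yields $\incr(w) + \des(w) \geq n$, as desired. The main subtlety is the distinctness check among descent positions, and this is precisely where the linear nesting structure makes the argument simpler than in the noncrossing case.
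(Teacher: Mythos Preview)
Your proof is correct, but it takes a much longer route than the paper. The paper's proof is a single line: every nested involution is noncrossing (a pair of $2$-cycles that form a nesting cannot simultaneously form a crossing), so Corollary~\ref{cor:noncrossing steady state} applies directly. Your argument instead reproves the content of Proposition~\ref{prop:noncrossing have L-shaped SD} from scratch in the special nested case, establishing $\incr(w)+\des(w)\ge n$ by hand before invoking Lemma~\ref{lemma for prop:noncrossing have L-shaped SD} and Theorem~\ref{thm: L-shaped SD SST}. Everything you wrote is valid, and the linear nesting structure does make the descent-distinctness check marginally cleaner as you observe, but the gain is negligible: once Proposition~\ref{prop:noncrossing have L-shaped SD} and Corollary~\ref{cor:noncrossing steady state} are already available, the one-line containment ``nested $\Rightarrow$ noncrossing'' is all that is required.
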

\begin{proof}
Since a nested involution is a noncrossing involution, by Corollary~\ref{cor:noncrossing steady state} its steady-state time is at most $1$.
\end{proof}

The following lemma is a special case of \cite[Theorem 5.2]{Pos09}. The forward direction of the lemma can be proven by applying the inverse RS algorithm, and the reverse direction by Schensted's theorem (Theorem~\ref{thm:Schensted's theorem}).

\begin{lemma} 
\label{lem:L shape iff w has nested cycles}
Suppose $w$ is an involution. 
Then 
the RS partition 
of $w$ is
L-shaped iff $w$ is a nested involution. 
\end{lemma}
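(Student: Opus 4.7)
The plan is to prove the two implications separately, following the hint in the excerpt.

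For the reverse direction ($w$ nested $\Rightarrow$ RS partition L-shaped), I would combine Schensted's theorem (Theorem~\ref{thm:Schensted's theorem}) with the elementary fact that any partition $\lambda$ of $n$ satisfies $\lambda_1 + \lambda_1' \leq n+1$, with equality if and only if $\lambda$ is a hook. Thus it suffices to prove $\incr(w) + \decr(w) \geq n+1$ for every nested involution $w \in S_n$. I would argue by induction on the number $k$ of $2$-cycles of $w$; the base case $k=0$ is the identity, where $\incr(w) = n$ and $\decr(w) = 1$. For $k \geq 1$, let $(a_1, d_1)$ be the outermost $2$-cycle; by nesting, all remaining $2$-cycles have endpoints strictly between $a_1$ and $d_1$, so the restriction of $w$ to those interior positions is a nested involution $w'$ on $n' = d_1 - a_1 - 1$ letters with $k-1$ cycles. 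Let $L$ and $R$ count the fixed points of $w$ that are less than $a_1$ and greater than $d_1$, respectively. Concatenating the $L$ left fixed points, a longest increasing subsequence of $w'$, and the $R$ right fixed points yields an increasing subsequence of $w$ of length $L + \incr(w') + R$, and bracketing a longest decreasing subsequence of $w'$ by the value $d_1$ at position $a_1$ on the left and the value $a_1$ at position $d_1$ on the right yields a decreasing subsequence of length $\decr(w') + 2$. Applying the inductive hypothesis $\incr(w') + \decr(w') = n'+1$ and noting that $L + R = (n-2k) - (n' - 2(k-1)) = n - d_1 + a_1 - 1$, a short computation gives $\incr(w) + \decr(w) \geq n+1$.

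For the forward direction (L-shape $\Rightarrow$ $w$ nested), I would use a counting argument. Restricting the RS correspondence to involutions gives a bijection $w \mapsto \P(w)$ with standard Young tableaux on $n$ cells, so the number of involutions in $S_n$ whose RS partition is a hook equals $\sum_{a=1}^{n} \binom{n-1}{a-1} = 2^{n-1}$. On the other hand, a nested involution on $[n]$ is determined by choosing the $2k$ elements that appear in $2$-cycles (after which the nesting structure is forced), so the number of nested involutions in $S_n$ is $\sum_{k=0}^{\lfloor n/2 \rfloor} \binom{n}{2k} = 2^{n-1}$. Since the reverse direction already embeds the nested involutions into the involutions with hook RS partition, and both sets have the same cardinality, the two sets must coincide.

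The main obstacle is the case analysis in the inductive step of the reverse direction, particularly when the outermost $2$-cycle is adjacent, i.e.\ $d_1 = a_1 + 1$, so $w'$ is empty and $\incr(w') = \decr(w') = 0$; in this degenerate case the increasing sequence has to be extended using the pair $(a_1, d_1)$ itself rather than inner data, so the bound should be stated uniformly as $\incr(w) \geq L + R + \max(1, \incr(w'))$. An alternative to the counting argument for the forward direction is to apply inverse RSK directly to each hook SYT and track the bumping path to exhibit the nested structure, but the counting proof above is noticeably cleaner.
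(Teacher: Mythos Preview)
Your argument is correct. The paper itself does not write out a proof; it cites \cite{Pos09} and remarks that the reverse direction follows from Schensted's theorem while the forward direction can be obtained by applying the inverse RS algorithm to a hook-shaped tableau.

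Your reverse direction follows exactly the paper's hint: you invoke Schensted's theorem together with the hook characterization $\lambda_1+\lambda_1'=n+1$. The induction on the number of $2$-cycles is a clean way to establish $\incr(w)+\decr(w)\ge n+1$, and your patched bound $\incr(w)\ge L+R+\max(1,\incr(w'))$ correctly disposes of the degenerate case $d_1=a_1+1$ (equivalently, one could simply treat $k=1$ as a second base case, where $\incr(w)=n-1$ and $\decr(w)=2$).

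Your forward direction genuinely departs from the paper's suggestion. Rather than tracing the inverse RS bumping on a hook tableau to watch the nested structure emerge, you count both sides: there are $\sum_{a}\binom{n-1}{a-1}=2^{n-1}$ hook standard tableaux, and $\sum_{k}\binom{n}{2k}=2^{n-1}$ nested involutions (since the nested pairing is forced once the $2k$ non-fixed points are chosen), so the injection from the reverse direction must be a bijection. This is slick and avoids any bumping analysis, at the cost of being non-constructive and of relying on the reverse direction having already been established. The inverse-RS route, by contrast, is self-contained for that implication and actually exhibits, for each hook tableau, the nested cycle structure of the corresponding involution.
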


The following tells us that nested involutions are (BBS) good, but all other noncrossing involutions are not good.
\begin{proposition}
\label{prop:noncrossing w is good iff w is nested}
Suppose $w$ is noncrossing. Then $w$ is good iff $w$ is a nested involution.
\end{proposition}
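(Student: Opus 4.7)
The plan is to deduce the proposition directly from Theorem~\ref{thm:tfae}, Proposition~\ref{prop:noncrossing have L-shaped SD}, and Lemma~\ref{lem:L shape iff w has nested cycles}, using the key observation that an L-shaped partition of $n$ is completely determined by the length of its first row.

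For the forward direction, I would assume $w$ is a noncrossing involution that is good. By Theorem~\ref{thm:tfae}, $\sh \SD(w) = \sh \P(w)$. Proposition~\ref{prop:noncrossing have L-shaped SD} says $\sh \SD(w)$ is L-shaped, so $\sh \P(w)$ is L-shaped as well. Since $w$ is an involution, Lemma~\ref{lem:L shape iff w has nested cycles} then forces $w$ to be nested.

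For the reverse direction, I would assume $w$ is a noncrossing and nested involution. By Proposition~\ref{prop:noncrossing have L-shaped SD}, $\sh \SD(w)$ is L-shaped of size $n$; by Lemma~\ref{lem:L shape iff w has nested cycles}, $\sh \P(w)$ is also L-shaped of size $n$. I would then note that Schensted's theorem (Theorem~\ref{thm:Schensted's theorem}) gives that the first row of $\P(w)$ has length $\incr(w)$, and the localized Schensted's theorem (Theorem~\ref{thm:local Schensted's theorem}) gives that the first row of $\SD(w)$ also has length $\incr(w)$. Since an L-shape of size $n$ is uniquely determined by the length of its first row, we conclude $\sh \SD(w) = \sh \P(w)$, and Theorem~\ref{thm:tfae} then yields that $w$ is good.

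There is no real obstacle here; the statement is essentially a bookkeeping combination of the characterization of good permutations via shape equality (Theorem~\ref{thm:tfae}), the shape computations for $\SD(w)$ in the noncrossing case, and the parallel shape result for $\P(w)$ in the nested case. The only subtlety worth spelling out in the write-up is the trivial observation that two L-shaped partitions of the same size with equal first row lengths are equal, which is what lets the two shape computations be matched.
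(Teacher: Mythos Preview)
Your proposal is correct and follows essentially the same route as the paper's proof: both combine Theorem~\ref{thm:tfae}, Proposition~\ref{prop:noncrossing have L-shaped SD}, and Lemma~\ref{lem:L shape iff w has nested cycles}. The only difference is that the paper compresses your reverse direction into the single line ``this equality holds iff the RS partition of $w$ is L-shaped,'' leaving implicit the fact (which you spell out via Theorems~\ref{thm:Schensted's theorem} and~\ref{thm:local Schensted's theorem}) that the first rows of $\SD(w)$ and $\P(w)$ always have the same length, so that two L-shaped partitions of size $n$ must coincide.
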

\begin{proof}
Suppose $\omega$ is noncrossing. 
By Proposition~\ref{prop:noncrossing have L-shaped SD}, the BBS soliton partition of $\omega$ is L-shaped.
The permutation $\omega$ is good iff the RS partition of $\omega$ is equal to the BBS soliton partition of $\omega$ (Theorem~\ref{thm:tfae}). 
This equality holds iff the RS partition of $w$ is L-shaped, which is true iff $w$ is a nested involution (Lemma~\ref{lem:L shape iff w has nested cycles}).
\end{proof}

\begin{remark}
The previous proposition implies that if an involution is good, then it either has a crossing or it is a nested involution. (The converse is false: we can find an involution which has a crossing but is not good. 
For instance, the involution $\pi=5274163=(15)(37)$ from Example~\ref{ex:thm: L-shaped SD SST:5274163} has a crossing and is not good.)
\end{remark}



\subsection{Column reading words have L-shaped soliton decompositions}

Remark~\ref{rem:t=0} tells us that a permutation has steady-state time $0$ iff it is the row reading word of a standard tableau.  
In this section, we prove that the column reading word of a standard tableau has steady-state time at most $1$. 

\begin{definition}
The \emph{column reading word} or \emph{column word} of a tableau $T$ is the word obtained by reading the columns of $T$ bottom to top,
from left to right.
\end{definition}

If $w$ is the column word of a standard tableau $T$, then $\P(w)=T$ (see~\cite[Section 2.3]{fulton_1996}). 
For instance, $63174285$ is the column word of the standard tableau
\[T=\young(125,348,67)=\P(63174285).\]

\begin{definition}
The \emph{column superstandard} tableau of shape $\lambda$ is the tableau of shape $\lambda$ which is obtained by filling the columns top to bottom, from left to right, with the integers $1,2,3,\dots, n$, in this order.
\end{definition}

The following lemma can be deduced from applying the inverse RS algorithm and from the fact that all column reading words of standard tableaux of the same shape are dual Knuth equivalent. 
The forward direction of the lemma is stated in~\cite[Lemma 4.2]{Gill21}.
\begin{lemma}
\label{lem:column word superstandard}
A permutation $w$ is the column reading word of a standard tableau iff $\Q(w)$ is column superstandard.
\end{lemma}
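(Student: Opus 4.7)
The plan is to prove both directions constructively. Write $C_\lambda$ for the column superstandard tableau of shape $\lambda$. For the forward direction I would induct on the number of columns of $T$, tracking both $\P$ and $\Q$ as the column word of $T$ is inserted letter by letter. For the backward direction I would combine the forward direction with the injectivity of the Robinson--Schensted correspondence.

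For the forward direction, let $T$ be a standard tableau of shape $\lambda$ with column heights $h_1 \geq h_2 \geq \cdots \geq h_\ell$, and let $w$ be its column word. The key claim, proved by induction on $j \in \{0, 1, \ldots, \ell\}$, is that after RS-inserting the prefix of $w$ coming from the first $j$ columns of $T$, the insertion tableau $\P$ equals the first $j$ columns of $T$ and the recording tableau $\Q$ equals the first $j$ columns of $C_\lambda$. The case $j = 0$ is vacuous, and $j = 1$ is immediate because the first $h_1$ letters form a strictly decreasing sequence, so each insertion bumps a single column and produces column $1$ of $T$ as $\P$ and the column $1, 2, \ldots, h_1$ as $\Q$. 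For the inductive step I would process the $h_j$ entries $T(h_j, j), T(h_j - 1, j), \ldots, T(1, j)$ in order and show that the $k$-th insertion cascades straight down column $j$ of $\P$: the entry $T(h_j - k + 1, j)$ bumps $T(h_j - k + 2, j)$ out of cell $(1, j)$, which in turn bumps $T(h_j - k + 3, j)$ out of $(2, j)$, and so on, with $T(h_j, j)$ finally appended at the new cell $(k, j)$. The label $\Q$ records at $(k, j)$ is $h_1 + \cdots + h_{j-1} + k$, which is exactly $C_\lambda(k, j)$.

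The backward direction is short. Given $\Q(w) = C_\lambda$ of shape $\lambda$, set $T \coloneqq \P(w)$, which has shape $\lambda$, and let $w'$ be the column word of $T$. The identity $\P(w') = T$ cited in the excerpt, together with the forward direction applied to $w'$, yields $(\P(w'), \Q(w')) = (T, C_\lambda) = (\P(w), \Q(w))$, and injectivity of RS then forces $w = w'$, exhibiting $w$ as a column word.

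The main obstacle is rigorously verifying the cascade in the inductive step. At each sub-step the bumped column-$j$ entry must bump the \emph{next} column-$j$ entry in the row below (rather than some entry in columns $1, \ldots, j - 1$), and at the final sub-step the entry must append at the end of row $k$ rather than bump something further left. Both reductions follow from the row-strict inequality $T(r, j - 1) < T(r, j) \leq T(h_j, j)$ and the column-strict chain $T(h_j - k + 1, j) < \cdots < T(h_j, j)$ that hold because $T$ is a standard tableau, combined with the inductive description of row $r$ of $\P$ as $T(r, 1), T(r, 2), \ldots, T(r, j - 1)$ possibly followed by a single column-$j$ entry. Threading these inequalities consistently through a cascade of unknown depth, and checking that the rows being bumped into actually exist (which uses $h_1 \geq h_2 \geq \cdots \geq h_\ell$), is the bookkeeping that makes this the delicate part of the argument.
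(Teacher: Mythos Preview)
Your proof is correct. The forward induction on columns is sound: the inequalities $T(h_j-k+m,j)\geq T(m,j)>T(m,j-1)$ (valid because $h_j\geq k$) guarantee each bumped entry lands in column $j$ of the next row, and $h_{j-1}\geq h_j\geq k$ ensures that row exists. The backward direction via injectivity of RS is clean.

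Your route differs from the paper's. The paper does not spell out a proof; it cites the forward direction from \cite{Gill21} and indicates that the full statement follows from the inverse RS algorithm together with the fact that all column reading words of standard tableaux of a fixed shape are dual Knuth equivalent (hence share a common $\Q$). By contrast, you prove the forward direction from scratch by an explicit bump-tracking induction, and you derive the backward direction without invoking dual Knuth equivalence at all, using only the forward direction and the bijectivity of RS. Your argument is more elementary and self-contained; the paper's sketch is shorter but leans on the dual Knuth machinery already developed in Section~\ref{sec:dual knuth} and on an external reference. Either approach is adequate here, and your backward step---building the candidate column word from $\P(w)$ and then invoking injectivity---is arguably the most transparent way to close the equivalence once the forward direction is in hand.
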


For example, 
let $w=63174285$ be the column word 
from the previous example. We have
\[ \P(w)=\young(125,348,67) \hspace{5mm} \Q(w)=\young(147,258,36)\]
where $\Q(w)$ is the column superstandard tableau of shape $(3,3,2)$.

\begin{remark} \label{rmk: superstandard to cycle}
If $T$ is a column superstandard tableau, the one-line notation of the 
involution $\pi$ where $\P(\pi)=\Q(\pi)=T$ 
is the column word of $T$. 
Equivalently, the cycle notation for $\pi$
can be described as follows.
Take each column of $T$ and ``fold" it in the middle. Each pair of entries that touch gives us a 2-cycle of $\pi$, and the entry in the center of the column (if the column has odd length) gives us a fixed point of $\pi$.
Therefore, $\pi$ is a noncrossing involution. If the second column has length at least $2$, then $\pi$ is not a nested involution (see Definition~\ref{def:nested}). 
For example, consider $\pi \in S_9$ where 
\[\P(\pi)=\Q(\pi)=\begin{ytableau} \textcolor{red}{\bf 1} & \textcolor{forGreen}{\bf 6} & \bf 9 \\ \textcolor{blue}{\bf 2} & \bf 7 \\ \bf 3 & \textcolor{forGreen}{\bf 8} \\ \textcolor{blue}{\bf 4} \\ \textcolor{red}{\bf 5}
\end{ytableau}.
\]
Then $\pi$ is the column word $54321 876 9$ in one-line notation and 
$\pi=\textcolor{red}{\bf (15)} \textcolor{blue}{\bf (24)} ~ {\bf (3)} ~\textcolor{forGreen}{\bf (68)} {\bf (7)} ~ {\bf (9)}$
in cycle notation, so $\pi$ is a noncrossing involution which is not nested.
\end{remark}

\begin{proposition} 
If $w$ is the column reading word of a standard tableau (equivalently, if $\Q(w)$ is a column superstandard tableau), then the steady-state time of $w$ is $0$ or $1$.
\end{proposition}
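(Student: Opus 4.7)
The plan is to combine the equivalence already supplied by Lemma~\ref{lem:column word superstandard} with Theorem~\ref{thm:Q determines steady state} (recording tableau determines steady-state time) and Corollary~\ref{cor:noncrossing steady state} (noncrossing involutions have steady-state time at most $1$). The key observation is that among the permutations sharing a given column superstandard recording tableau, there is a canonical representative that is a noncrossing involution, so the bound on that representative transfers to every permutation in the $\Q$-equivalence class.

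More concretely, I would proceed as follows. First, by Lemma~\ref{lem:column word superstandard}, the hypothesis that $w$ is the column reading word of a standard tableau is equivalent to saying $\Q(w)$ is column superstandard; call this tableau $T$. Next, consider the involution $\pi$ with $\P(\pi)=\Q(\pi)=T$. By Remark~\ref{rmk: superstandard to cycle}, $\pi$ is a noncrossing involution (its cycle structure is obtained by ``folding'' each column of $T$ in the middle). Applying Corollary~\ref{cor:noncrossing steady state}, we conclude that $\SST(\pi)\le 1$. Finally, since $\Q(w)=T=\Q(\pi)$, Theorem~\ref{thm:Q determines steady state} yields $\SST(w)=\SST(\pi)\le 1$, as required.

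There is essentially no technical obstacle here, because all the heavy lifting has been done: the dual Knuth / recording tableau invariance result bridges $w$ and the specific noncrossing involution $\pi$, while Proposition~\ref{prop:noncrossing have L-shaped SD} together with Theorem~\ref{thm: L-shaped SD SST} controls $\pi$ itself. The only subtle point worth highlighting is that $\pi$ may fail to be nested (in which case it is not \textbf{good}, so $\SD(\pi)\ne\P(\pi)$ in general), but this is irrelevant for bounding the steady-state time, since Corollary~\ref{cor:noncrossing steady state} applies to \emph{all} noncrossing involutions.
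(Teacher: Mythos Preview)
Your proof is correct and follows essentially the same route as the paper: pass to the involution $\pi$ with $\Q(\pi)=T$, invoke Remark~\ref{rmk: superstandard to cycle} to see that $\pi$ is noncrossing, apply Corollary~\ref{cor:noncrossing steady state} to bound $\SST(\pi)$, and transfer the bound to $w$ via Theorem~\ref{thm:Q determines steady state}. Your closing remark about $\pi$ possibly failing to be nested is a nice clarification but, as you note, immaterial to the argument.
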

\begin{proof}
Let $w$ be the column reading word of a standard tableau (equivalently, $\Q(w)$ is a column superstandard tableau).  
Let $\pi$ be the involution
such that $\Q(\pi)=\Q(w)$. 
Since $\Q(\pi)$ is column superstandard,
Remark \ref{rmk: superstandard to cycle} tells us that $\pi$ is a noncrossing involution.  
Therefore, we have $\SST(\pi) \leq 1$ by 
Corollary \ref{cor:noncrossing steady state}. 
Since the recording tableau of a permutation determines steady-state time (Theorem~\ref{thm:Q determines steady state}), we have $\SST(w)=\SST(\pi) \leq 1$.
\end{proof}

\section{A maximum steady-state time}\label{sec:maximum sst}

The following theorem and conjecture are given in~\cite{DGGRS23}.
If $n\geq 5$, let
\[\widehat{\Q}_n= 
\begin{ytableau}
 1 & 2 & 5 & \none[\hdots]&\scalebox{.6}{$n-2$}&\scalebox{.6}{$n-1$}\\
 3 & 4\\
 n
 \end{ytableau}\]

\begin{theorem}[{\cite[Theorem~6.7]{DGGRS23}}] 
If $n \geq 5$ and $\Q(\omega)=\widehat{\Q}_n$, then the steady-state time of $\omega$ is $n-3$. 
\end{theorem}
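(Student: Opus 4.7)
My plan is to combine Theorem~\ref{thm:Q determines steady state} with the upper bound from Theorem~\ref{thm:max SST of T with sh(n-3,2,1)}. Since $\widehat{Q}_n$ has shape $(n-3,2,1)$, every permutation $\omega$ with $\Q(\omega) = \widehat{Q}_n$ automatically satisfies $\SST(\omega) \leq n-3$. By Theorem~\ref{thm:Q determines steady state}, all such $\omega$ share the same steady-state time, so it suffices to exhibit one witness $\omega$ with $\Q(\omega) = \widehat{Q}_n$ and $\SST(\omega) \geq n-3$.

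My candidate witness is the involution $\pi_n \coloneqq 3\,n\,1\,4\,5\cdots(n-1)\,2 \in S_n$. A direct RSK insertion shows $\P(\pi_n) = \Q(\pi_n) = \widehat{Q}_n$: inserting $3,n$ builds a length-two top row; $1$ bumps $3$ into row two; $4$ bumps $n$ into row two; then $5, 6, \ldots, n-1$ simply append to row one; finally inserting $2$ triggers the cascade that bumps $4$ into row two and $n$ into row three, producing exactly $\widehat{Q}_n$.

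For the lower bound $\SST(\pi_n) \geq n-3$, the heart of the argument is an explicit invariant describing $\BB^t(\pi_n)$ for $1 \leq t \leq n-4$. I would prove by induction on $t$ that:
\begin{enumerate}
\item the pair $\{3,n\}$ sits at positions $2t+1$ and $2t+2$, with $3$ to the left of $n$;
\item the singleton $\{4\}$ sits at position $n+t-1$;
\item the first soliton $1\,2\,5\,6\cdots(n-1)$ (of size $n-3$) sits at positions $n+1+(t-1)(n-3)$ through $2n-3+(t-1)(n-3)$.
\end{enumerate}
The base case $t=1$ is verified directly by performing one BBS move from $\pi_n$, with Theorem~\ref{thm:first soliton created after 1 BBM} guaranteeing that the rightmost run of $\BB^1(\pi_n)$ equals the first row of $\P(\pi_n)$. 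Under the invariant, reading the increasing runs of $\BB^t(\pi_n)$ from right to left yields the first soliton (size $n-3$), the singleton $\{4\}$ (size $1$), and then the pair $\{3,n\}$ (size $2$), so $\sh\ID(\BB^t(\pi_n)) = (n-3,1,2)$, which is not weakly decreasing. Proposition~\ref{prop:characterization of steady state} then gives that $\BB^t(\pi_n)$ is not in steady state for any $t \leq n-4$, so $\SST(\pi_n) \geq n-3$.

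The main obstacle is the inductive step, which requires carefully simulating one BBS move across the three separated regions. The delicate point is how ball $n$ behaves: since ball $3$ jumps first (from position $2t+1$ to position $2t+3$), ball $n$ at position $2t+2$ sees position $2t+3$ now occupied and hops past it to land at $2t+4$, preserving the pair at positions $(2(t+1)+1, 2(t+1)+2)$. Ball $4$ advances by exactly one box, and the first soliton translates rightward by exactly $n-3$. The invariant is preserved as long as $t \leq n-5$, which keeps the gap between the pair and the singleton positive; it breaks precisely at the transition from $t=n-4$ to $t=n-3$, when ball $n$ instead becomes an isolated singleton and $\{3,4\}$ form the new pair. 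This transition need not be analyzed for the lower bound, since we only need that every earlier state has shape $(n-3,1,2)$.
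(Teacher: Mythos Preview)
Your argument is correct. The paper itself does not supply a proof of this statement---it is quoted from~\cite[Theorem~6.7]{DGGRS21}---so there is no ``paper's own proof'' to compare against directly. That said, the paper's internal machinery does yield a proof along exactly the lines you describe: your witness $\pi_n = 3\,n\,1\,4\,5\cdots(n-1)\,2$ is the involution $(1\,3)(2\,n)$, which is precisely $W_{n-1}$ in the notation of Proposition~\ref{prop:chain of Q with SST 0 to n-3}, and that proposition establishes $\SST(W_{n-1}) = n-3$ (with $Q_{n-1} = \widehat{Q}_n$).

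The only real difference is in bookkeeping. You carry out the induction on the three-block configuration by hand, tracking explicit positions of the pair $\{3,n\}$, the singleton $\{4\}$, and the first soliton. The paper instead packages this once-and-for-all in Lemma~\ref{lemma:thm:max SST of T with sh(n-3,2,1)}: once you have verified that $\BB^1(\pi_n)$ has the form~\eqref{eq:thm:max SST of T with sh(n-3,2,1):BB1:lemma} with $m = n-5$ empty boxes between the pair and the singleton, the lemma immediately gives $\SST(\pi_n) = 1 + (n-5) + 1 = n-3$ and goodness of $\pi_n$, with no further induction needed. Invoking that lemma would let you drop the explicit position-tracking and the separate appeal to Theorem~\ref{thm:max SST of T with sh(n-3,2,1)} for the upper bound, since the lemma computes $\SST$ exactly rather than just bounding it below. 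But your explicit induction is sound: the key point you identify---that ball $4$ has already vacated position $n+t-1$ by the time ball $n$ needs to land at $2t+4$---is exactly what makes the step go through for all $t \le n-5$.
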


\begin{conjecture}[{\cite[Conjecture~1.1]{DGGRS23}}]
\label{conjecture:n-3}
Let $n\geq 5$ and $\omega \in S_n$. If $\Q(\omega)$ is not equal to $\widehat{\Q}_n$,
then the steady-state time of $\omega$ is less than $n-3$.
\end{conjecture}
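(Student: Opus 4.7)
By Theorem~\ref{thm:Q determines steady state} it suffices to show that, for $n \ge 5$, every standard tableau $T$ of size $n$ other than $\widehat{\Q}_n$ satisfies $\SST(\omega) < n-3$ whenever $\Q(\omega)=T$. The plan is a shape-by-shape analysis of $T$, combining Theorem~\ref{thm:first soliton created after 1 BBM} (the first soliton appears after at most one move) with the localized Schensted theorem to bound $\SST$ from above in each case.

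The first step eliminates shapes that are already known to give small $\SST$. L-shaped partitions give $\SST \le 1$ by Theorem~\ref{thm: L-shaped SD SST}; a single row or column yields $\SST=0$; and any shape with $\lambda_1 \ge n-2$ leaves at most two balls outside the first soliton, so a direct analysis of the residual two-ball configuration gives $\SST \le 2$, which is strictly less than $n-3$ once $n \ge 6$. Together with a symmetric argument on the first column, this reduces the problem to partitions that are not L-shaped and whose first row and first column have length at most $n-3$; for small $n$ the remaining cases can be checked directly.

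The second step is an inductive reduction on $n$. Choose a representative $\omega$ with $\Q(\omega)=T$ (for instance the involution $\pi_T$ with $\P(\pi_T)=\Q(\pi_T)=T$). Applying one BBS move produces the first soliton $\Rowone(\P(\omega))$ at the right by Theorem~\ref{thm:first soliton created after 1 BBM}, and the remaining $n-\lambda_1$ balls occupy a configuration whose eventual stabilization contributes $\SST(\omega)-1$ further moves. Once the gap between the first soliton and the residual configuration exceeds the size of any residual increasing run, the first soliton propagates unimpeded and can be ignored; the residual problem should then be comparable to a smaller permutation whose recording tableau is obtained from $T$ by deleting the top row. An inductive hypothesis would then yield the bound $\SST(\omega) \le 1 + (n - \lambda_1 - 3)$, which is strictly less than $n-3$ unless $\lambda_1 = n-3$ \emph{and} the residual recording tableau is exactly $\widehat{\Q}_{n-\lambda_1}$. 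Together these conditions should force $T = \widehat{\Q}_n$.

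The critical and hardest step is the uniqueness within shape $(n-3,2,1)$: Theorem~\ref{thm:max SST of T with sh(n-3,2,1)} gives $\SST \le n-3$ for every such $T$ but does not single out $\widehat{\Q}_n$. The plan here is to pick $\pi_T$ as above and run the carrier algorithm of Section~\ref{sec:first soliton}, tracking precisely how many moves are required for the second and third solitons to separate. The locations of the entries $3,4$ and $n$ in $T$ control the pace of separation, and any perturbation of $\widehat{\Q}_n$---moving $n$ into row~$2$ or rearranging the entries of the first column---should shorten at least one intermediate BBS run. I expect the main obstacle to be isolating a clean tableau-level invariant that records this ``slack'' without a brute-force case enumeration; such an invariant might be extracted from the growth rate of the gaps between successive solitons as a function of the entries of~$T$, in the spirit of the remark following Corollary~\ref{cor:k+1 st soliton created after 1 BBM in special case}.
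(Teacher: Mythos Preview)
The statement you are attempting to prove is recorded in the paper as a \emph{conjecture} (Conjecture~\ref{conjecture:n-3}, taken from~\cite{DGGRS21}); the paper does not prove it and offers only the partial result Theorem~\ref{thm:max SST of T with sh(n-3,2,1)} for the single shape $(n-3,2,1)$. So there is no proof in the paper to compare your proposal against, and your proposal should be read as an attack on an open problem.

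Your outline has a genuine structural gap in the inductive step. After one BBS move the first soliton is formed, but the residual configuration on the remaining $n-\lambda_1$ balls is a BBS state with empty boxes interspersed among the balls, not a permutation. Theorem~\ref{thm:first soliton created after 1 BBM} and Corollary~\ref{cor:k+1 st soliton created after 1 BBM in special case} are stated for permutations precisely because they can fail for such configurations; the Remark following Corollary~\ref{cor:k+1 st soliton created after 1 BBM in special case}, which you cite, exhibits a residual configuration where the next soliton takes \emph{two} further moves to appear. This is exactly the obstruction to your recursion ``$\SST(\omega)\le 1+(n-\lambda_1-3)$'': you cannot invoke the inductive hypothesis on the residual state, and the gap between the first soliton and the rest does not in general grow fast enough to decouple them before interactions occur. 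Relatedly, the assertion that the residual problem has recording tableau ``$T$ with its top row deleted'' is not justified; the first soliton is $\Rowone(\P(\omega))$, not a set of positions determined by $\Q(\omega)$, and there is no established correspondence between removing the first soliton and truncating $\Q$.

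The uniqueness analysis within shape $(n-3,2,1)$ is also only a plan: you would need to show that every standard tableau of that shape other than $\widehat{\Q}_n$ has $\SST\le n-4$, whereas Theorem~\ref{thm:max SST of T with sh(n-3,2,1)} and Lemma~\ref{lemma:thm:max SST of T with sh(n-3,2,1)} only give the non-strict bound $\SST\le n-3$. Extracting the extra unit of slack requires controlling the parameter $m$ in~\eqref{eq:thm:max SST of T with sh(n-3,2,1):BB1} in terms of the entries of $\Q(\omega)$, which the paper does not do.
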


Since the recording tableau of a permutation determines its steady-state time (Theorem~\ref{thm:Q determines steady state}), if $T$ is a standard tableau, we can define the
\emph{steady-state time of $T$} 
to be the steady-state time for all permutations $w$ such that $\Q(w)=T$. 
Let $\SST(T)$ denote the steady-state time of a standard tableau $T$. 
In Section~\ref{sec:tableaux of shape (n-3,2,1)}, 
we prove a partial result: 
the maximum steady-state time 
for tableaux of shape $(n-3,2,1)$ is $n-3$. 
In Section~\ref{sec:chain of tableaux},
we present a chain of tableaux that have steady-state times from 0 to $n-3$.

\subsection{Maximum steady-state time for tableaux of shape (n-3, 2, 1)}
\label{sec:tableaux of shape (n-3,2,1)}
\begin{lemma}
\label{lem:sh(n-3,2,1)}
If $\omega \in S_n$ and
\begin{center}$\sh(\P(\omega))=(n-3,2,1)=\scalebox{.75}{\begin{ytableau} ~& & & & \none[\hdots] \\ & \\ \\ \end{ytableau}}$ 
\end{center}
then either 
$\sh(\SD(\omega))$ is $(n-3,2,1)$ or  $(n-3,1,1,1)$.
\end{lemma}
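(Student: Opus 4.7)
The plan is to use the two parts of the localized Schensted's theorem for permutations (Theorem~\ref{thm:local Schensted's theorem}) together with the comparison in Remark~\ref{rem:sizes of RS and BBS} between the statistics $\decr$ and $\localdecr$ to pin down the first row and first column of $\sh\SD(\omega)$, and then to enumerate by hand the finitely many partitions of $n$ compatible with those constraints.

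First I would read off the first row. Since $\sh\P(\omega) = (n-3,2,1)$, Schensted's theorem (Theorem~\ref{thm:Schensted's theorem}) gives $\incr(\omega) = n-3$. By Theorem~\ref{thm:local Schensted's theorem}(1), the first part of $\sh\SD(\omega)$ also equals $\incr(\omega) = n-3$. Next I would bound the first column. By Schensted's theorem, $\decr(\omega) = 3$. Remark~\ref{rem:sizes of RS and BBS}(2) states that $\decr(\omega) \le \localdecr(\omega)$, so $\localdecr(\omega) \ge 3$. By Theorem~\ref{thm:local Schensted's theorem}(2), this is exactly the size of the first column of $\sh\SD(\omega)$, so that first column has length at least $3$.

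At this point $\sh\SD(\omega)$ is a partition of $n$ (since the soliton sizes, read left-to-right, form a weakly increasing sequence) with first part equal to $n-3$ and first column of length at least $3$. The remaining $n - (n-3) = 3$ cells must be arranged in the rows below the first in a way that gives a valid partition and leaves at least three rows total. The three combinatorial possibilities for partitioning the leftover $3$ cells are $(3)$, $(2,1)$, and $(1,1,1)$, producing respectively the candidate shapes $(n-3,3)$, $(n-3,2,1)$, and $(n-3,1,1,1)$. The first candidate has only $2$ rows, violating the first-column bound of $3$, so it is ruled out. The remaining two candidates both satisfy all constraints, yielding the stated dichotomy.

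There is essentially no hard step here; the argument is a direct consequence of the two instances of the localized Schensted's theorem together with the RS-vs.-BBS comparison in Remark~\ref{rem:sizes of RS and BBS}. The only thing to be careful about is to make sure the enumeration of partitions of $n$ with prescribed first row and lower bound on first column is complete, which I have checked above.
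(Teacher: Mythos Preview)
Your proof is correct and follows essentially the same approach as the paper. The paper's proof simply invokes Remark~\ref{rem:sizes of RS and BBS}\eqref{itm:rem:lambdaRS1 equals lambdaBBS1} and \eqref{itm:rem:perm stats inequalities} to conclude that the first row of $\SD(\omega)$ has size $n-3$ and the first column has size at least $3$, leaving the enumeration of compatible partitions implicit; you unpack those remarks into Schensted's theorem and the localized Schensted's theorem and then carry out the enumeration explicitly, which is exactly the intended argument.
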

\begin{proof}
The fact that the size of the first row of $\SD(w)$ is $n-3$ follows from Remark~\ref{rem:sizes of RS and BBS}\eqref{itm:rem:lambdaRS1 equals lambdaBBS1}.
The size of the first column of $\SD(w)$ is at least $3$ by Remark~\ref{rem:sizes of RS and BBS}\eqref{itm:rem:perm stats inequalities}.
\end{proof}


\begin{lemma}
\label{lemma:thm:max SST of T with sh(n-3,2,1)}
Let $n \geq 5$ and $\omega \in S_n$. Suppose that  
at time $t \geq 1$ we have the 
(non-steady-state)
 BBS configuration 
\begin{equation}
\label{eq:thm:max SST of T with sh(n-3,2,1):BB1:lemma}
X= \BB^t(\omega)=
\dots 
~ 
\underbrace{\mathbf{a} \, \mathbf{b}}_{\text{increasing run}}
~ \overbrace{e e \dots e}^{\substack{\text{$m$ copies}\\\text{of $e$'s}}}
~ \mathbf{x} ~
\overbrace{e \dots e}^{\substack{\text{$\ell$ copies}\\\text{of $e$'s}}} 
~ \underbrace{1 \, y_2 \, y_3 \, \dots \, y_{n-3}}_{\text{increasing run}} 
~
\dots 
\end{equation} 
where
\begin{itemize}
\item $\mathbf{a} < \mathbf{b}$
 is an increasing run
and 
$\mathbf{b} > \mathbf{x}$, 
\item $1 < y_2 < y_3 < \dots < y_{n-3}$ is the rightmost increasing run,  
\item 
 $m \geq 0$ is the number of empty boxes between $\mathbf{b}$ and $\mathbf{x}$.
\end{itemize}
Then we have the following.
\begin{enumerate} 
\item
$X$ first reaches steady state after we apply $m+1$ additional BBS moves; that is,  
$\BB^{m}(X)$ is not a steady-state configuration, but $\BB^{m+1}(X)$ is. In other words, $\SST(\omega)=t+m+1$.

\item 
If $\mathbf{a} < \mathbf{x}$, then 
$
\SD(\omega)=\begin{ytableau}
1 & y_2 & y_3 &\none[\dots]\\
\mathbf{a} & \mathbf{x} \\
\mathbf{b}
\end{ytableau}$;
otherwise, 
$
\SD(\omega)=\begin{ytableau}
1 & y_2 & y_3 & \none[\dots]\\
\mathbf{x} & \mathbf{b} \\
\mathbf{a}
\end{ytableau}.
$
In either case, $\SD(\omega)$ is standard, that is, $\omega$ is a (BBS) good permutation.
\end{enumerate}
\end{lemma}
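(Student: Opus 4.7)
The plan is to prove parts (1) and (2) together by induction on $m \geq 0$, with the localized Schensted's theorem for BBS (Theorem~\ref{thm:local Schensted's theorem for BBS}) providing the key shape constraint. First I would observe that $X$ has exactly three increasing runs (the rightmost soliton $1 y_2 \dots y_{n-3}$ of size $n-3$, the singleton $\mathbf{x}$, and the pair $\mathbf{a}\mathbf{b}$), so $\localdecr(X)=3$ and $\localincr(X)=n-3$. Since both statistics are preserved by BBS moves and determine the first column and first row of the eventual BBS soliton partition, we must have $\sh\SD(\omega)=(n-3,2,1)$.

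For the base case $m=0$, I would run the carrier algorithm on $X$ directly. The carrier picks up $\mathbf{a}$ then $\mathbf{b}$, outputting empty boxes each time. When it reads $\mathbf{x}$ it bumps out $\mathbf{b}$ in the subcase $\mathbf{a}<\mathbf{x}<\mathbf{b}$, and bumps out $\mathbf{a}$ in the subcase $\mathbf{x}<\mathbf{a}<\mathbf{b}$. Continuing the bookkeeping through the $\ell$ empty boxes and then the first soliton shows that $\BB(X)$ decomposes into three increasing runs: the unchanged first soliton $1 y_2 \dots y_{n-3}$, together with the pair and singleton dictated by the two subcases of the conclusion. Proposition~\ref{prop:t=0 generalization} reduces verifying that $\BB(X)$ is in steady state to checking that the row sizes $(n-3,2,1)$ are weakly decreasing (immediate from $n\geq 5$) and that $\CA(\BB(X))$, as a possibly disconnected skew tableau, has strictly increasing columns. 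The column constraints reduce to $1<\mathbf{a}<\mathbf{b}$ and $y_2<\mathbf{x}$ in the subcase $\mathbf{a}<\mathbf{x}$, and to the analogous $1<\mathbf{x}<\mathbf{a}$ and $y_2<\mathbf{b}$ in the other subcase.

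The main obstacle is the $y_2<\mathbf{x}$ (respectively $y_2<\mathbf{b}$) column check, and I would dispatch it by invoking the invariance of $\localincr$ a second time: in the subcase $\mathbf{a}<\mathbf{x}$, if we had $y_2>\mathbf{x}$, then the subsequence $\mathbf{a},\mathbf{x},y_2,y_3,\dots,y_{n-3}$ of $X$ would be an increasing subsequence of penalized length $n-2$, contradicting $\localincr(X)=n-3$. The other subcase is handled analogously using $\mathbf{a},\mathbf{b},y_2,\dots,y_{n-3}$. For the inductive step $m\geq 1$, a parallel carrier computation produces $\BB(X)$ of the same form as $X$, with the same balls $\mathbf{a},\mathbf{b},\mathbf{x}$ playing the same roles but with the gap between $\mathbf{b}$ and $\mathbf{x}$ reduced by one (and the first soliton translated to the right). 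Applying the induction hypothesis to $\BB(X)$ then yields $\SST(\omega)=t+m+1$ and confirms the claimed formula for $\SD(\omega)$, including the standardness of the tableau.
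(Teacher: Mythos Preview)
Your plan is essentially the paper's argument reorganized as an induction on $m$: both rely on Theorem~\ref{thm:first soliton created after 1 BBM} to pin down the rightmost run, Proposition~\ref{prop:t=0 generalization} for the final steady-state check, and the invariance of $\localincr$ to handle the one nontrivial column inequality. The paper tracks the three leftover balls directly (the pair advances two boxes per move, the singleton one, so the gap shrinks by one each time) until they collide and then applies one last move; your induction just repackages this, and your inductive step is exactly that ``gap shrinks by one'' observation.

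One point in the base case needs tightening. Your penalized-length argument, as written, is not valid for arbitrary $\ell$: the subsequence $\mathbf{a},\mathbf{x},y_2,\dots,y_{n-3}$ in $X$ (with $m=0$) spans the $\ell$ empty boxes between $\mathbf{x}$ and the first soliton, so its penalized length is $(n-2)-\ell$, not $n-2$; and indeed $y_2<\mathbf{x}$ is simply false in many examples (e.g.\ $w=452361$ at $t=2$ has $\mathbf{x}=2$ and $y_2=3$). What saves you is that the column constraint ``$y_2<\mathbf{x}$'' in $\CA(\BB(X))$ is only present when row~2 actually overlaps column~2 of row~1, i.e.\ when there is no gap between the three balls and the first soliton in $\BB(X)$; the paper isolates this explicitly and notes it forces $n=5$ (equivalently, in your base case, $\ell=0$), which is exactly the regime where your penalized-length count does give $n-2$. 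You should make this case split explicit rather than asserting the reduction to $y_2<\mathbf{x}$ uniformly.
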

\begin{proof}
By Theorem~\ref{thm:first soliton created after 1 BBM}, the rightmost increasing run $1 y_2 y_3 \dots y_{n-3}$ is the first soliton.

If $m>0$, we apply $m$ additional BBS moves to $X$.
At each BBS move,
the first soliton will move forward $n-3 \geq 2$ boxes and 
the increasing block $\mathbf{a} \mathbf{b}$ will move forward 2 boxes, and the singleton block $\mathbf{x}$ will move forward 1 box, 
so that the number of spaces between  $\mathbf{a} \mathbf{b}$ and $\mathbf{x}$ 
decreases by 1 after each BBS move.
The two blocks $\mathbf{a} \mathbf{b}$ and $\mathbf{x}$ touch in the configuration 
\[\BB^{m}(X)=\dots \mathbf{a} \, \mathbf{b} \,  \mathbf{x} \dots \] 
Since $\mathbf{x}< \mathbf{b}$, 
we have
\begin{align*}
\BB^{m+1}(X)=
\begin{cases}
\dots \mathbf{b} \overbrace{ \mathbf{a} \mathbf{x} }^{\text{soliton}} \dots & \text{ if $\mathbf{a} < \mathbf{x}$}\\
\dots \mathbf{a} \underbrace{\mathbf{x} \mathbf{b} }_{\text{soliton}} \dots & \text{ if $\mathbf{x} < \mathbf{a}$}
\end{cases}
\end{align*}

Let $T$ be the configuration array of $\BB^{m+1}(X)$ (see Section~\ref{sec:configuration array}).
If there is at least one empty box between these three balls and the first soliton, 
the inequalities involving the numbers $a, b, x, 1,$ and $y_2$ 
guarantee that
$T$ is a standard skew tableau whose rows are weakly decreasing in length. 
If there is no gap between these three balls and the first soliton, we must have $\omega \in S_5$ where 
\begin{align*}
\BB^{m+1}(\omega)=
\begin{cases}
\dots \mathbf{b} \overbrace{ \mathbf{a} \mathbf{x} }^{\text{soliton}} 1 y_2
& \text{ if $\mathbf{a} < \mathbf{x}$}
\\
\dots \mathbf{a} \underbrace{\mathbf{x} \mathbf{b} }_{\text{soliton}} 1 y_2
& \text{ if $\mathbf{x} < \mathbf{a}$}
\end{cases}
\end{align*}
If $\mathbf{a} < \mathbf{x}$, we claim that $y_2<x$. Otherwise, we would have 
$\mathbf{a} < \mathbf{x} < y_2$, making $\localincr(\BB^{m+1}(\omega)) \geq 3$, contradicting the fact that $1 y_2$ is the first soliton. 
By similar argument, if $\mathbf{x} < \mathbf{a}$, we must have $y_2<b$.

Therefore, $T$
is a standard skew tableau whose rows are weakly decreasing in length. Thus $\BB^{m+1}(X)$
is in steady state by Proposition~\ref{prop:t=0 generalization}.
Since the order that the balls appear in $\BB^{m}(X)$ is different than in $\BB^{m+1}(X)$, we know that $\BB^{m}(X)$ is not yet in steady state. 
\end{proof}

\begin{theorem}
\label{thm:max SST of T with sh(n-3,2,1)}
If the RS partition of $w$ is 
 $(n-3,2,1)$, then $\SST(w) \leq n-3$. 
\end{theorem}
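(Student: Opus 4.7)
The plan is to split on $\sh(\SD(w))$, which by Lemma~\ref{lem:sh(n-3,2,1)} is either $(n-3,2,1)$ or the L-shape $(n-3,1,1,1)$. In the L-shape case, Theorem~\ref{thm: L-shaped SD SST} immediately gives $\SST(w) \leq 1 \leq n-3$, so we assume $\sh(\SD(w))=(n-3,2,1)$. By Theorem~\ref{thm:first soliton created after 1 BBM} and Remark~\ref{rem:height of ID is an invariant}, $\BB^1(w)$ has exactly three increasing runs: the first soliton $1\, y_2 \cdots y_{n-3}$ of length $n-3$ on the right, a length-$2$ run $ab$ with $a<b$, and a singleton $x$. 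We further split according to whether $ab$ is to the left of $x$ (sub-case B1) or $x$ is to the left of $ab$ (sub-case B2).

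In sub-case B1, I first verify the hypothesis $b>x$ of Lemma~\ref{lemma:thm:max SST of T with sh(n-3,2,1)}: if $b<x$ then $a<b<x$, so after $m$ further BBS moves $ab$ would catch $x$ and merge into an increasing run $abx$, producing a length-$3$ soliton incompatible with $\sh(\SD(w))=(n-3,2,1)$. The lemma applied at $t=1$ then gives $\SST(w) = m+2$, where $m$ is the number of empty boxes between $ab$ and $x$. Since ball $1$ is always at position $n+1$ in $\BB^1(w)$ (it jumps first, landing in the first empty box to its right), if $s$ denotes the position of $a$ we obtain the identity $s + m + \ell + 3 = n+1$, so it suffices to prove $s \geq 3$. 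Position $1$ is always empty in $\BB^1(w)$ since the ball originally at position $1$ must jump rightward. If $s=2$, the ball at position $2$ must be $w_1$ (only a ball originally at position $1$ can reach position $2$), so $a = w_1$, and the jump from position $1$ to position $2$ forces $w_2 < w_1$. A parallel argument at position $3$ forces $b = w_2$ and $w_3 < w_2$. Combining, $a = w_1 > w_2 = b$, contradicting $a<b$. Hence $s \geq 3$, giving $m \leq m+\ell \leq n-5$ and $\SST(w) \leq n-3$.

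In sub-case B2, the three runs from left to right have speeds $1, 2, n-3$, weakly increasing for $n \geq 5$, so they never overtake one another and the final soliton decomposition is
\[
\SD(w) = \begin{ytableau} 1 & y_2 & \none[\dots] & y_{n-3} \\ a & b \\ x \end{ytableau}.
\]
Since we are in the case where $\SD(w) = \P(w)$ is a standard Young tableau, column $2$ forces $y_2 < b$ and column $1$ forces $a < x$. A direct check of the configuration array of $\BB^1(w)$ shows that its row lengths $n-3, 2, 1$ are weakly decreasing and that every column overlap (which arises precisely when $\ell=0$ or $m=0$) is column-increasing thanks to the inequalities $y_2<b$ and $a<x$. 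By Proposition~\ref{prop:t=0 generalization}, $\BB^1(w)$ is already in steady state, so $\SST(w) \leq 1 \leq n-3$. The main obstacle is the subtle positional argument $s \geq 3$ in sub-case B1; the sub-case B2 analysis becomes clean once the standardness of $\SD(w) = \P(w)$ is invoked to pull down the inequalities $y_2<b$ and $a<x$.
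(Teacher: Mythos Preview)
Your proof follows the paper's structure exactly: split on $\sh\SD(w)$ via Lemma~\ref{lem:sh(n-3,2,1)}, dispose of the L-shape via Theorem~\ref{thm: L-shaped SD SST}, then at $t=1$ split on the relative order of the pair $ab$ and the singleton $x$. Sub-case B1 is the paper's main case, and your positional argument for $s\geq 3$ spells out in full what the paper compresses into the bare assertion ``$k\geq 2$ because $a<b$.''

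The one place to tighten is sub-case B2. Your speeds-then-standardness route to the inequalities $y_2<b$ and $a<x$ is circular: the claim that the three runs ``never overtake'' already presupposes that they behave as solitons, which is precisely what you then use Proposition~\ref{prop:t=0 generalization} to establish. (Weakly increasing run lengths alone do not force steady state; e.g.\ $2314$ has two runs of length $2$ but is not in steady state.) The fix is to derive the two inequalities directly. When $m=0$, adjacency of the runs $x$ and $ab$ forces the descent $x>a$, giving $a<x$ for free. When $\ell=0$, suppose for contradiction that $b<y_2$; running the carrier algorithm on $\BB^1(w)$ one sees that $b$ is never bumped from the carrier while the first soliton is inserted, so the final flush produces the rightmost increasing run $1\,b\,y_2\cdots y_{n-3}$ of length $n-2$, contradicting Theorem~\ref{thm:first soliton created after 1 BBM}\eqref{thm:itm:first soliton created after 1 BBM}. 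With the inequalities in hand, your configuration-array check via Proposition~\ref{prop:t=0 generalization} goes through cleanly. The paper's own treatment of this sub-case is a one-line appeal to Theorem~\ref{thm:first soliton created after 1 BBM} (``the rightmost soliton won't interact with the three balls'') and is, if anything, less detailed than yours, so this is a shared soft spot rather than a divergence in approach.
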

\begin{proof}
Suppose $\omega \in S_n$ and with RS partition $(n-3,2,1)$.  
Lemma~\ref{lem:sh(n-3,2,1)} tells us that $\sh(\SD(\omega))$ is either $(n-3,1,1,1)$ or $(n-3,2,1)$. 
If $\sh(\SD(\omega))=(n-3,1,1,1)$, then by Theorem \ref{thm: L-shaped SD SST} we have $\SST(\omega) \leq 1$. 
So suppose we have 
\begin{equation}
\label{eq:thm:max SST of T with sh(n-3,2,1):shape of SD}
\sh(\SD(\omega)) 
=(n-3,2,1).
\end{equation} 

At time $t=0$, let the $n$ balls $w_1 w_2 \dots w_n$ of $w$ be in boxes $1$ through $n$. 
We apply one BBS move to $\omega$ and consider all possibilities for the configuration $\BB^1(\omega)$ at time $t=1$. 
By Theorem~\ref{thm:first soliton created after 1 BBM}, we know the first soliton has been formed by $t=1$, 
so we only need to consider the possibilities for the remaining three balls.
By Remark~\ref{rem:height of ID is an invariant}, the number of rows in $\ID(\BB^1(\omega))$ is equal to that of $\SD(\omega)$, so $\ID(\BB^1(\omega))$ has three rows. 
Thus, the remaining three balls 
form a length-$2$ increasing run $\textbf{ab}$ and a length-$1$ (singleton) increasing run $\textbf{x}$.

If the length-$1$ block is to the left of the length-$2$ block at $t=1$, 
then $\BB^1(\omega)$ is already in steady state 
because Theorem~\ref{thm:first soliton created after 1 BBM} tells us that the rightmost soliton won't interact with the three balls after $t=1$. Therefore, $\SST(\omega) \leq 1$.

So suppose 
the length-$2$ block is to the left of 
the length-$1$ block at $t=1$, that is, 
\begin{equation}
\label{eq:thm:max SST of T with sh(n-3,2,1):BB1}
\BB^1(\omega)=\underbrace{
\overbrace{e \dots e}^{\substack{\text{$k$ copies}\\\text{of $e$'s}}}
~ 
\mathbf{a} \, \mathbf{b}  ~ \overbrace{e e \dots e}^{\substack{\text{$m$ copies}\\\text{of $e$'s}}}
~ \mathbf{x} ~
\overbrace{e \dots e}^{\substack{\text{$\ell$ copies}\\\text{of $e$'s}}}}_{\text{$n$ boxes}} ~ 
\underbrace{1 \, y_2 \, y_3 \, \dots y_{n-3}}_{\text{first soliton}} 
\end{equation}
where
\begin{itemize}
\item $\mathbf{a} < \mathbf{b}$
\item $\mathbf{a}$ is in box $k+1$ 
\item 
 $m \geq 0$ is the number of empty boxes between $\mathbf{b}$ and $\mathbf{x}$,
\item $\ell \geq 0$ is the number of empty boxes between $\mathbf{x}$ and the ball $1$.
\end{itemize}

First, observe that $\mathbf{x}< \mathbf{b}$. Otherwise, we would have $\mathbf{a} < \mathbf{b} < \mathbf{x}$, and eventually 
the increasing run $\mathbf{ab}$ would catch up to $\mathbf{x}$, forming a length-3 soliton $\mathbf{abx}$; this would mean that
$\sh(\SD(\omega))=(n-3,3)$, contradicting~\eqref{eq:thm:max SST of T with sh(n-3,2,1):shape of SD}.
Thus, 
$\BB^1(\omega)$
is of the form~\eqref{eq:thm:max SST of T with sh(n-3,2,1):BB1:lemma} in Lemma~\ref{lemma:thm:max SST of T with sh(n-3,2,1)}, so 
\[\SST(\omega)=1+m+1=m+2.\]

Finally, observe that $k \geq 2$ because $\mathbf{a} < \mathbf{b}$. 
We also know that  
 $k+m+3+\ell = n$ because ball $1$ is in box $n+1$ at time $1$.
Putting these together, 
we have \begin{align*}
m&=n-k-3-\ell \leq n-2-3,\\
m+2 &\leq n-3,
\end{align*}
proving $\SST(\omega)\leq n-3$.
\end{proof}

\subsection{Tableaux with increasing steady-state times via Bender--Knuth involution}
\label{sec:chain of tableaux}

In this section, we 
create a sequence of $n-2$ 
good tableaux whose steady-state times are from $0$ to $n-3$.

\begin{definition}[Bender--Knuth involution]
Let $T$ be a standard tableau with shape $\lambda$ and size $n$. Then for each $i \in \{1, \dots, n-1\}$, $\sigma_i$ is a map from the set of all standard tableaux of shape $\lambda$ to itself. 
If the $i$, $i+1$ are in the same row or column, then $\sigma_i(T) = T$. Otherwise, the map $\sigma_i$ swaps the numbers $i$ and $i+1$ in $T$. 
\end{definition}
\begin{example}
For instance,
\[T=\young(136,25,4) ~ \neq  ~
\sigma_2(T)=\young(126,35,4)
\]
but 
$\sigma_3(\sigma_2(T)) = \sigma_2(T)$.
\end{example}

Corollary~\ref{cor:noncrossing steady state}
and 
Proposition~\ref{prop:noncrossing w is good iff w is nested}
tell us that all noncrossing involutions have steady-state time $0$ or $1$ and
that most noncrossing involutions are bad. 
In combinatorics, the ``noncrossing" objects and the ``nonnesting" objects are often equinumerous, so it is natural to ask for a nonnesting analog of 
these results. 
The next proposition 
tells us that, for $t \in \{0, \dots,  n-3 \}$, there is a ``nonnesting" involution $W_t$ with steady-state time $t$.

\begin{proposition}
\label{prop:chain of Q with SST 0 to n-3}
Let
\begin{equation*}
Q_0 \coloneqq Q_0(n)=
\begin{ytableau} 
1 & 3 & 6 & 7 & 8 & \hdots & n \\
2 & 5 \\
4
\end{ytableau}
\end{equation*}
Then we have the following.
\begin{itemize}
\item $\SST(Q_0)=0$
\item $\SST(\sigma_2(Q_0))=1$
\item $\SST(\sigma_k \dots \sigma_5 \sigma_4 \sigma_2(Q_0))=k-2$, for each $k=4,5,\dots,n-1$.
\end{itemize}
Furthermore, each tableau in the sequence of tableaux 
\[
\text{$Q_0$,  $\sigma_2(Q_0)$, and
$\sigma_k \dots \sigma_5 \sigma_4 \sigma_2(Q_0)$}
\]
is (BBS) good.

\end{proposition}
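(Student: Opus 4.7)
My plan is to exhibit, for each tableau $T$ in the claimed sequence, an explicit permutation $u$ with $\Q(u)=T$, compute $\SST(u)$ and check that $\SD(u)$ is standard directly, and then transfer these properties to $T$ via Theorem~\ref{thm:Q determines steady state} and Theorem~\ref{thm:Q is either good or bad}. I would take
\[u_0\coloneqq 4\,2\,5\,1\,3\,6\,7\cdots n,\qquad u_2\coloneqq 4\,5\,2\,1\,3\,6\,7\cdots n,\]
and, for $4\le k\le n-1$,
\[u_k\coloneqq 3\,5\,2\,4\,6\,7\cdots k\,1\,(k+1)\,(k+2)\cdots n.\]

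I would first verify by direct Robinson--Schensted insertion that $\Q(u_0)=Q_0$, $\Q(u_2)=\sigma_2(Q_0)$, and $\Q(u_k)=\sigma_k\cdots\sigma_4\sigma_2(Q_0)$. For $u_0$, which is the row reading word of $Q_0$, Remark~\ref{rem:t=0} gives $\SST(u_0)=0$, and since $u_0$ is already in steady state, $\SD(u_0)=\ID(u_0)=Q_0$ is standard, so $u_0$ is good. For $u_k$ with $k\ge 4$, the first four insertions produce the $2\times 2$ tableau $P$ with rows $(2,4)$ and $(3,5)$; the successive insertions of $6,7,\ldots,k$ append to the first row of $P$ and hence record $Q$-entries along the first row; the insertion of the letter $1$ cascades bumps down column~$1$, creating a third row and recording the $Q$-entry $k$ at position $(3,1)$; finally, the insertions of $k+1,\ldots,n$ append to the first row. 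Comparing the resulting $Q$ with $\sigma_k\cdots\sigma_4\sigma_2(Q_0)$ computed by iterating the prescribed Bender--Knuth moves on $Q_0$ confirms the equality.

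Next I would compute $\BB^1(u_2)$ and $\BB^1(u_k)$ by tracking the individual ball jumps. For $u_2$ the result has increasing runs $4$, $2\,5$, and $1\,3\,6\,7\cdots n$ from left to right; its configuration array is a standard skew tableau with weakly decreasing row lengths, so Proposition~\ref{prop:t=0 generalization} gives that $\BB^1(u_2)$ is in steady state, yielding $\SST(u_2)=1$ and $\SD(u_2)$ standard. For $k\ge 4$ the analogous computation gives
\[
\BB^1(u_k)\;=\;e\,e\,3\,5\,\underbrace{e\cdots e}_{k-4}\,2\,\underbrace{e\cdots e}_{n-k-1}\,1\,4\,6\,7\cdots n,
\]
which is precisely the configuration treated by Lemma~\ref{lemma:thm:max SST of T with sh(n-3,2,1)} with $\mathbf{a}=3$, $\mathbf{b}=5$, $\mathbf{x}=2$, and $m=k-4$. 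The lemma immediately yields $\SST(u_k)=1+(k-4)+1=k-2$ and tells us $\SD(u_k)$ is standard, so $u_k$ is good.

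Theorem~\ref{thm:Q determines steady state} and Theorem~\ref{thm:Q is either good or bad} then transfer these steady-state and goodness conclusions from each $u_k$ to the recording tableau $\Q(u_k)$, finishing the proof. I expect the main obstacle to be the bookkeeping in the one-step BBS computation for $u_k$: one must order the ball jumps correctly and verify that ball~$2$ lands at position $k+1$ while ball~$1$ is flushed past position $n$, so that the balls $1,4,6,7,\ldots,n$ line up as the first soliton and the remaining balls $3,5,2$ form the predicted left-hand pattern. Once this one-move snapshot is pinned down, Lemma~\ref{lemma:thm:max SST of T with sh(n-3,2,1)} handles the entire family $\{u_k\}_{k\ge 4}$ in one uniform stroke.
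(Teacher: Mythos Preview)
Your strategy is the paper's strategy: pick one explicit permutation in each $\Q$-class, verify directly that its $\Q$-tableau is the desired Bender--Knuth image of $Q_0$, compute one BBS move, and then hand the configuration to Lemma~\ref{lemma:thm:max SST of T with sh(n-3,2,1)} to read off both the exact steady-state time and standardness of $\SD$. The paper happens to choose the involutions $W_k=(13)(2,k{+}1)$ as its representatives, while you pick non-involutory permutations $u_k$, but this is cosmetic; the substance is identical.

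There is, however, a genuine off-by-one error in your $u_k$. As you have written it, ball~$1$ sits in position $k$ of $u_k$, so the $\Q$-entry created when $1$ is inserted is $k$, not $k{+}1$; hence
\[
\Q(u_k)=\begin{ytableau}1&2&5&\none[\cdots]&\scalebox{.7}{$k{-}1$}&\scalebox{.7}{$k{+}1$}&\none[\cdots]&n\\3&4\\k\end{ytableau}
=\sigma_{k-1}\cdots\sigma_4\sigma_2(Q_0),
\]
not $\sigma_k\cdots\sigma_4\sigma_2(Q_0)$. Consistently, one BBS move applied to your $u_k$ leaves only $k-5$ empty boxes between the pair $3\,5$ and the singleton $2$ (not $k-4$), so Lemma~\ref{lemma:thm:max SST of T with sh(n-3,2,1)} yields $\SST(u_k)=k-3$, which is the correct value for $\sigma_{k-1}\cdots\sigma_4\sigma_2(Q_0)$ but off by one from what you assert. (In particular, your $u_4$ is not even a permutation: with $k=4$ the displayed word repeats the value $5$.) The fix is simply to shift ball $1$ one slot to the right, taking
\[
u_k=3\,5\,2\,4\,6\,7\cdots(k{+}1)\,1\,(k{+}2)\cdots n,
\]
i.e.\ with $1$ in position $k{+}1$; then your RSK description, your displayed $\BB^1(u_k)$ with $m=k-4$, and the conclusion $\SST(u_k)=k-2$ all become correct, and the range $4\le k\le n-1$ is exactly what is needed.
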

\begin{proof}
The involution $W_0=\RSK^{-1}(Q_0, Q_0)$ is $(14)(35)$ in cycle notation and $4251367\dots n$ in one-line notation.
Since the latter 
is the row reading word of a standard tableau (namely, $Q_0$), 
Remark~\ref{rem:t=0} tells us that 
$W_0$ 
has steady-state time 0.

Next, consider 
\begin{center}
$\sigma_2(Q_0) =\begin{ytableau}
1 & 2 & 6 & 7 & \dots & n \\
3 & 5 \\
4
\end{ytableau}$, 
\qquad 
$\sigma_4 \sigma_2(Q_0)=  \begin{ytableau}
1 & 2 & 6 & 7 & \dots & n \\
3 & 4 \\
5
\end{ytableau}$
\end{center}
\begin{center}
$\sigma_5 \sigma_4 \sigma_2(Q_0)= \begin{ytableau}
1 & 2 & 5 & 7 & \dots & n \\
3 & 4 \\
6
\end{ytableau}$
\end{center}
By performing the inverse RS algorithm, we see that the involutions 
whose RS tableaux are 
$\sigma_2(Q_0)$, $\sigma_4 \sigma_2(Q_0)$, and $\sigma_5 \sigma_4 \sigma_2(Q_0)$ are $(14)(25)$, $(13)(25)$, and $(13)(26)$, respectively. 
Their steady-state times 
are $1$, $2$, and $3$, respectively.

We now calculate the steady-state time for the rest of the tableaux in this sequence.
Fix $6 \leq k \leq n-1$, and 
let 
\[
Q_k \coloneqq \sigma_k \sigma_{k-1}...\sigma_6\sigma_5\sigma_4\sigma_2(Q_0)
=
\ytableausetup{boxsize=1.7em}
\begin{ytableau} 1 & 2 & 5 & 6 & ... & k & \scalebox{.7}{$k+2$} & ... & n \\ 3 & 4 \\ \scalebox{.7}{$k+1$} \end{ytableau}.
\]
Its corresponding involution is $W_k \coloneqq \RSK^{-1}(Q_k,Q_k)=(13)(2,k+1)$. 
We will show that $W_k$ has steady-state time $k-2$. The configuration at time $t=0$ is 
the one-line notation of 
$W_k$:
\begin{equation*}
     3 \, (k+1) \, 1 \, 4 \, 5 \, 6 \hdots \hdots k \, 2 \, (k+2) \, \hdots \, n
\end{equation*}
At $t=1$ we have the configuration
\begin{equation*}
\BB^1(W_k)=
e \, 
e \,
\underbrace{3 \, (k+1)}_{\text{increasing run}} 
    \overbrace{
    e \hdots e}^{k-4  \text{ copies}} 
{4}
\overbrace{e \hdots e}^{  \substack{  n-k-1\\ \text{copies}}} \underbrace{1256 \hdots k \, (k+2) \hdots n}_{\text{increasing run}}
\end{equation*}
which is of the form given in~\eqref{eq:thm:max SST of T with sh(n-3,2,1):BB1:lemma} in Lemma~\ref{lemma:thm:max SST of T with sh(n-3,2,1)}.
Therefore, $\SST(W_k)=1+(k-4)+1=k-2$ and $W_k$ is 
good.  
Indeed, we have
\begin{align*}
\BB^{k-4}(W_k)=
3 (k+1) e 4 e \hdots e & 1256 \hdots k (k+2) \hdots n\\
\BB^{k-3}(W_k) = 3 (k+1) 4 e \hdots ee & 1256 \hdots k (k+2) \hdots n \\
\label{eq:SS of k-2}
\BB^{k-2}(W_k) = (k+1) 3 4 e \hdots ee &1256 \hdots k (k+2) \hdots n
\end{align*}
so $\BB^{k-2}(W_k)$ is in steady state, but $\BB^{k-3}(W_k)$ is not; 
in addition, $\SD(W_k)=\P(W_k)=Q_k$, so $Q_k$ is good.
\end{proof}

\begin{example}
\label{ex:425136}
Consider $\omega = 452361$. Using Proposition~\ref{prop:chain of Q with SST 0 to n-3}, we can create a sequence of tableaux that have steady-state times $0$, $1$, $2$, and $3$:
\begin{equation*}
Q_0 =\young(136,25,4), 
\sigma_2(Q_0) = 
\young(126,35,4),
\sigma_4\sigma_2(Q_0) = 
\young(126,34,5),
\sigma_5 \sigma_4 \sigma_2 (Q_0) = 
\young(125,34,6)
\end{equation*}
The corresponding involutions are 
\begin{align*}
(14)(35), \quad (14)(25), \quad (13)(25), \quad &\text{ and } \quad (13)(26) \text{ in cycle notation, and} \\
425136, \qquad 453126, \qquad 351426, \quad &\text{ and }  \quad 361452 \text{ in one-line notation},
\end{align*}
in this order.
\end{example}

\section{Further directions}
\label{sec:further directions}
Recall that a permutation $\omega$ is 
 \emph{(BBS) good} 
if 
$\SD(\omega)$ is standard (equivalently, 
$\SD(\omega) = \P(\omega)$, due to Theorem~\ref{thm:tfae}). 
If a permutation is not good, let us call it \emph{bad}.

\subsection{Classical permutation patterns}

A permutation, or \emph{pattern}, $\sigma$ is said to be \emph{contained in}, or to be a \emph{subpermutation} of, another
permutation $\omega$ if $\omega$ has a (not necessarily contiguous) subsequence
whose elements are 
in the same relative order as $\sigma$, 
alternatively, $\omega$ has a subsequence whose standardization is equal to $\sigma$. 
If $\omega$ does not contain $\sigma$, we say that $\omega$ avoids $\sigma$. 
For example, $31\mathbf{4}5\mathbf{9}2\mathbf{6}\mathbf{8}7$ contains
1423 because the subsequence 4968 (among others) is ordered in the same way as 1423.
On the other hand, $314592687$ avoids $3241$ since $314592687$
has no subsequence ordered in the same way as $3241$. For more details, see for example the note~\cite{Bev15}.

The above notion of pattern containment and pattern avoidance is sometimes referred to as \emph{classical}. 
It turns out that classical pattern avoidance is too restrictive to be used to find all good permutations. 
The following shows that there are good permutations which contain bad patterns.

\begin{example}
\label{ex:good permutations are not closed under pattern containment}
A good permutation may have a subpattern which is not good.
\begin{enumerate}[a.)]
\item 
The permutation 
$2 5 1 4 3$ is good, 
but it has a subpermutation $2 1 4 3$ which is bad.

\item 
The permutation $3 5 1 4 2$ 
is good, 
but its subpermutation $3 1 4 2$ is bad. 

\item 
Let $\omega= 4 2 5 1 3$, 
which is a good permutation, 
and let $\sigma=4253$, a subsequence of $\omega$. 
The standardization of $\sigma$ is 
$3 1 4 2$, which is a bad permutation.
\end{enumerate}
\end{example}

\begin{remark}
Example~\ref{ex:good permutations are not closed under pattern containment} shows that the good permutations are \emph{not} closed under classical pattern containment.
This means that the set of 
good permutations \emph{cannot} be characterized by a set of classically avoided patterns.
\end{remark}

Although it is impossible to characterize good permutations using classical pattern avoidance, we can give an instance where classical pattern avoidance can be used to find a (proper) subset of good permutations.
The following is straightforward to prove using a localized version of Greene's theorem (see \cite[Section 2.2]{DGGRS23}) and Theorem~\ref{thm:tfae}.
\begin{proposition}\label{prop:if w avoids classical pattern 2143 and 3142 then w is good}
If 
$\omega$ avoids both the classical pattern
$2143$
and $3142$, 
then $\omega$ is good. 
\end{proposition}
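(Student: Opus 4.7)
The goal is to show $\sh\SD(\omega) = \sh\P(\omega)$; by Theorem~\ref{thm:tfae} this is equivalent to $\omega$ being good. Classical Greene's theorem together with the localized Greene's theorem from \cite[Section 2.2]{DGGRS21} yield the general dominance relation $\sh\SD(\omega)\trianglelefteq\sh\P(\omega)$, so it suffices to prove the reverse dominance
\[
\sum_{i=1}^{k}\sh\SD(\omega)'_i \;\le\; \sum_{i=1}^{k}\sh\P(\omega)'_i
\qquad\text{for every } k\ge 1,
\]
where primes denote conjugate partitions.

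By classical Greene's theorem, the right-hand side above equals the maximum cardinality of a union of $k$ disjoint decreasing subsequences of $\omega$, while the localized Greene's theorem expresses the left-hand side as an analogous maximum taken over certain descent-based ``local'' decreasing structures, which in general overcounts honest decreasing subsequences. The core of the argument is the case $k=1$: here $\sh\SD(\omega)'_1 = 1+\des(\omega)$, the number of increasing runs $R_0,\dotsc,R_d$ of $\omega$, and the claim is that one can select one element from each run so as to form a decreasing subsequence of length $d+1$, which would yield $\decr(\omega)\ge 1+\des(\omega)$ and hence equality. A greedy obstruction to such a choice must concentrate in three consecutive runs $R_{i-1}, R_i, R_{i+1}$ admitting no length-$3$ decreasing subsequence. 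Setting $a=\max R_{i-1}$, $b=\min R_i$, $c=\max R_i$, $d'=\min R_{i+1}$ (whose positions in $\omega$ occur in this left-to-right order, with $|R_i|\ge 2$ forced by the obstruction), the descent conditions give $b<a$ and $d'<c$. Either $a<d'$, producing the value order $b<a<d'<c$ and hence a $2143$ pattern; or $a>d'$ with no element of $R_i$ in the open interval $(d',a)$, forcing $b<d'$ and $c>a$, so $b<d'<a<c$, giving a $3142$ pattern. The avoidance hypothesis rules out both, settling the case $k=1$.

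For $k\ge 2$ the same local analysis applies iteratively: extract a maximum-length decreasing chain witnessing one of the parts of $\sh\P(\omega)'$, apply the $k=1$ argument to the residual descent structure, and induct. The main obstacle is handling multi-chain disjointness cleanly, since after removing a decreasing chain the residual descent pattern need not correspond to a classical subpermutation. However, both forbidden patterns involve only four positions, so any residual shortfall between the localized and classical Greene counts is witnessed by four elements of $\omega$, and I expect the obstruction to lift back to a genuine $2143$ or $3142$ in $\omega$ with a careful choice of extremal witnesses. This final lifting step is the place where the cleanest proof likely differs in bookkeeping from the sketch above, but the local obstruction itself remains the same.
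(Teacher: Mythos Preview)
Your overall framework---compare the localized and classical Greene statistics via Theorem~\ref{thm:tfae}---matches the paper's hint, but the proof has a genuine gap already in the $k=1$ case, and the $k\ge 2$ case is (as you acknowledge) unfinished.

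The specific error is the claim that an obstruction to a decreasing transversal of the runs ``must concentrate in three consecutive runs $R_{i-1},R_i,R_{i+1}$ admitting no length-$3$ decreasing subsequence.'' This is false. Take $w=326154$, with runs $R_0=\{3\}$, $R_1=\{2,6\}$, $R_2=\{1,5\}$, $R_3=\{4\}$. There is no decreasing transversal (any choice forces $3>r_1>r_2>4$, impossible), yet \emph{every} triple of consecutive runs admits a length-$3$ decreasing subsequence: $3>2>1$ through $R_0,R_1,R_2$, and $6>5>4$ through $R_1,R_2,R_3$. So your four-element witness $a,b,c,d'$ built from run extrema does not exist here, and the argument does not produce a forbidden pattern. (The permutation does contain $2143$---at positions $2,4,5,6$---but your method does not find it.)

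A repair for $k=1$ is to run the greedy algorithm explicitly: set $r_0=\max R_0$ and $r_i=\max\{x\in R_i:x<r_{i-1}\}$. If this fails at step $i$, then $\min R_i\ge r_{i-1}$, which forces $r_{i-1}<\max R_{i-1}$ and hence $i\ge 2$ and $\max R_{i-1}>r_{i-2}$. Now the four values $r_{i-2}$, $r_{i-1}$, $\max R_{i-1}$, $\min R_i$ (in that position order) form either a $2143$ or a $3142$ pattern depending on whether $\min R_i>r_{i-2}$ or $\min R_i<r_{i-2}$. Note that the first of these four values need not be $\max R_{i-2}$; it is the greedily chosen $r_{i-2}$, which in the example above is $r_1=2$, not $\max R_1=6$.

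Even with this fix, the $k\ge 2$ step remains open in your write-up. The sentence ``I expect the obstruction to lift back to a genuine $2143$ or $3142$'' is not a proof, and the inductive scheme you sketch (remove a decreasing chain, repeat) does not obviously preserve the avoidance hypothesis. The paper's intended argument uses the full localized Greene theorem of \cite[Section~2.2]{DGGRS21} directly; you would need to state that result precisely and check that under $2143$- and $3142$-avoidance the localized $k$-decreasing statistic coincides with the classical one for every $k$, not just $k=1$.
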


\begin{remark}
The converse of Proposition~\ref{prop:if w avoids classical pattern 2143 and 3142 then w is good} is false. 
As shown in Example~\ref{ex:good permutations are not closed under pattern containment},
there are good permutations which have the classical pattern $2143$ or $3142$. 
\end{remark}

\subsection{Consecutive permutation patterns}

A permutation, or pattern, $\sigma$ is said to be a  \emph{consecutive pattern} of another
permutation $\omega$ if $\omega$ has a consecutive subsequence whose elements are in the same relative order as $\sigma$. 
Otherwise, $\omega$ is said to avoid $\sigma$ as a consecutive pattern.
For example, 314\textbf{5926}87 contains
2413 because the subsequence 5926
 is ordered in the same way as 2413.
On the other hand, $314592687$ avoids $321$ since $314592687$
has no consecutive subsequence ordered in the same way as $321$ (although $314592687$ contains the classical pattern $321$). 
Consecutive pattern was first systematically studied by Elizalde and Noy in the early 2000s~\cite{ElizaldeNoy03}. Since then, this notion has appeared in dynamical systems~\cite{ElizaldeSIAM09}, generalized to arbitrary Coxeter groups~\cite{GaoWang23}, and more; see the survey~\cite{ElizaldeSurvey}.

We conjecture that good permutations are closed under consecutive pattern containment; that is, if a permutation is good, then any consecutive subpermutation is also good.

\begin{conjecture}
\label{conj:good permutations closed}
If a permutation $\omega$ is good, 
then the standardization of every consecutive subpattern of $\omega$ is also good.
\end{conjecture}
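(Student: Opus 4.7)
The plan is to reduce Conjecture~\ref{conj:good permutations closed} to two single-letter deletion lemmas and then translate each into a statement about recording tableaux via part~(3) of Theorem~\ref{thmIntro:Q determines steady state and Q determines BBS partition}. Because standardization commutes with restriction to a consecutive block, iterating the two one-letter deletions recovers every consecutive subpattern, so it suffices to prove that goodness is preserved by $\omega\mapsto\mathrm{std}(\omega_1\cdots\omega_{n-1})$ (right-truncation) and by $\omega\mapsto\mathrm{std}(\omega_2\cdots\omega_n)$ (left-truncation).

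For the right-truncation, standardization does not alter the $\Q$-tableau, so $\Q(\mathrm{std}(\omega_1\cdots\omega_{n-1}))$ is exactly $\Q(\omega)$ with its corner cell labelled $n$ erased. By Theorem~\ref{thm:Q is either good or bad}, this reduces the statement to the purely combinatorial assertion that \emph{deleting the cell labelled $n$ from a good tableau yields a good tableau}. My approach would be to prove this directly with the BBS machinery of the paper: put $\omega$ in steady state so that $\SD(\omega)=\P(\omega)$ by Theorem~\ref{thm:tfae}, locate the ball $\omega_n$ within that tableau, and use Theorem~\ref{thm:first soliton created after 1 BBM} together with Corollary~\ref{cor:k+1 st soliton created after 1 BBM in special case} to identify $\SD(\mathrm{std}(\omega_1\cdots\omega_{n-1}))$ with the standardization of $\SD(\omega)\setminus\{\omega_n\}$. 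Since a standard tableau with one entry removed is again standard after relabelling, goodness would descend.

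For the left-truncation, the corresponding tableau-level operation on $\Q(\omega)$ is promotion. The plan is to decompose promotion into a sequence of Bender--Knuth involutions $\sigma_i$ and to prove that each $\sigma_i$ preserves goodness whenever it moves between standard tableaux. The chain of good tableaux constructed in Proposition~\ref{prop:chain of Q with SST 0 to n-3} by repeated Bender--Knuth moves provides a concrete calibration family for this plan. If the claim holds at the level of individual $\sigma_i$'s, left-truncation goodness follows by composing the Bender--Knuth moves that realize promotion.

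The principal obstacle is that the paper supplies no intrinsic, structural characterization of good tableaux: goodness is defined dynamically, so every tableau-level manipulation must ultimately be cross-checked against BBS, whose dynamics are sensitive to the global ball ordering that standardization perturbs. The hardest sub-step will likely be establishing that $\sh\SD(\mathrm{std}(\omega_1\cdots\omega_{n-1}))$ equals $\sh\SD(\omega)$ minus a single corner cell, rather than undergoing a more drastic shape change, since BBS is a long-range system and need not respect local edits to the input. A resolution of Conjecture~\ref{conj:Motzkin} --- or even a partial intrinsic description of the $\Q$-tableaux that are good --- would likely supply the missing invariant and unlock both reductions simultaneously.
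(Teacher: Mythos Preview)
The statement you are attempting to prove is recorded in the paper as an open \emph{conjecture}; the paper offers no proof, so there is nothing to compare your argument against. Your proposal is a research plan toward an open problem and should be evaluated as such.

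The left-truncation half of your plan contains a step that is simply false. You propose to show that each Bender--Knuth involution $\sigma_i$ preserves goodness, citing Proposition~\ref{prop:chain of Q with SST 0 to n-3} as evidence. But that proposition only exhibits one particular chain of good tableaux; it does not claim that $\sigma_i$ preserves goodness in general, and in fact it does not. For a minimal counterexample in shape $(2,2)$: the tableau with rows $12$ and $34$ is good (the involution $3412$ has $\SD=\P$), whereas $\sigma_2$ sends it to the tableau with rows $13$ and $24$, which is bad (the involution $2143$ has $\sh\SD=(2,1,1)\neq(2,2)=\sh\P$). Hence the decomposition of promotion into Bender--Knuth moves cannot be used one step at a time to transport goodness. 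The right-truncation half also contains an unjustified leap: you assert that $\SD(\mathrm{std}(\omega_1\cdots\omega_{n-1}))$ coincides with the standardization of $\SD(\omega)\setminus\{\omega_n\}$, but removing $\omega_n$ from the \emph{initial} configuration and removing it from the \emph{steady-state} configuration are different operations on the box-ball system, and neither Theorem~\ref{thm:first soliton created after 1 BBM} nor Corollary~\ref{cor:k+1 st soliton created after 1 BBM in special case} bridges that gap. You correctly identify in your final paragraph that a structural characterization of good tableaux is the missing ingredient; without one, both reductions remain heuristic.
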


\subsection{Motzkin numbers}

The $n$th Motzkin number is the number of ways to draw nonintersecting chords between $n$ labeled points on a circle~\cite{Motzkin48}. 
They also count the number of Motzkin paths,  
$4321$-avoiding involutions, 
along with many other 
objects. See~\cite[A001006]{oeis} 
and~\cite{DonagheyShapiro77}.

\begin{conjecture}\label{conj:Motzkin}
The number of size-$n$ good tableaux is equal to the $n$th Motzkin number.
\end{conjecture}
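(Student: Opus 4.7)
The conjecture can be restated in the language of involutions. Every size-$n$ standard tableau $T$ arises as $\P(w_T) = \Q(w_T)$ for a unique involution $w_T \coloneqq \RS^{-1}(T,T)$, and by Theorem~\ref{thm:Q is either good or bad}, $T$ is good iff $w_T$ is a good permutation. Thus the conjecture is equivalent to the statement that the number of good involutions in $S_n$ equals the $n$th Motzkin number $M_n$. Since $M_n$ also enumerates noncrossing involutions in $S_n$ (equivalently, partial noncrossing matchings on $[n]$), a natural goal is an explicit bijection between good involutions and noncrossing involutions. The two families are genuinely distinct: Proposition~\ref{prop:noncrossing w is good iff w is nested} shows every noncrossing non-nested involution is bad, while small examples such as $(13)(24) = 3412$ show that many crossing involutions are good.

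My plan is as follows. First, classify the good involutions. The nested ones are already known to be good, and Lemma~\ref{lem:L shape iff w has nested cycles} together with the hook-length formula shows they number $2^{n-1}$. The remaining task is to pinpoint which crossing involutions $w$ satisfy $\sh \SD(w) = \sh \P(w)$. A promising angle is Conjecture~\ref{conj:good permutations closed}: if good permutations are indeed closed under consecutive pattern containment, then, under mild additional hypotheses, good involutions should be characterized by avoidance of a finite list of consecutive patterns, and their enumeration would reduce to transfer-matrix or cluster-method techniques. Second, with a characterization in hand, one would verify the Motzkin recurrence for good involutions by decomposing according to the image of $n$: a fixed point at $n$ should contribute the $M_{n-1}$ term, and a $2$-cycle pairing $n$ with some $i$ should split the remaining involution into independent good involutions on $[i-1]$ and on $\{i+1,\dots,n-1\}$, yielding the convolution sum $\sum_{k=0}^{n-2} M_k M_{n-2-k}$.

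The main obstacle is the classification step. Beyond the first row and first column, the localized Schensted theorem of Section~\ref{sec:Schensted's theorem and local Schensted's theorem} gives no direct control on $\sh\SD(w)$ versus $\sh\P(w)$. Through $n = 5$ the situation is clean: numerically the bad involutions are exactly the noncrossing non-nested ones. However, at $n = 6$ this already fails, since the conjecture forces $76 - 51 = 25$ bad involutions while only $51 - 32 = 19$ of them are noncrossing non-nested, so exactly six additional bad involutions must be crossing. Finding and proving the correct characterization of these exceptional crossing involutions appears to require a genuinely new understanding of how crossing $2$-cycles are absorbed or preserved by soliton formation, rather than information derived solely from the first row and first column of $\SD(w)$.
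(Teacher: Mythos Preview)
The paper does not prove this statement: it is stated as Conjecture~\ref{conj:Motzkin} and left open, with only the remark following it noting that neither noncrossing nor nonnesting involutions coincide with the good involutions. So there is no proof in the paper to compare your attempt against.

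Your proposal is, by your own account, not a proof either. You correctly reformulate the conjecture in terms of good involutions and you do some useful sanity-checking (the $2^{n-1}$ count for nested involutions, the small-$n$ numerics, and the observation that six crossing bad involutions must appear at $n=6$). But the two pillars of your plan are both missing. First, the classification step: you propose to lean on Conjecture~\ref{conj:good permutations closed}, which is itself unproven in the paper, and even granting it you would still need to produce the actual finite list of forbidden consecutive patterns and prove it is complete---you do not do this, and you acknowledge that the $n=6$ data already shows the naive description fails. Second, the recursive step: the Motzkin-style decomposition you sketch (split on the partner of $n$) presupposes that goodness of an involution factors cleanly over the two intervals $[1,i-1]$ and $[i+1,n-1]$. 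Nothing in the paper establishes such a factorization, and in fact the BBS dynamics and the soliton decomposition are global in a way that makes this independence far from obvious; the balls labeled in the two intervals interact during BBS moves whenever they are interleaved in the one-line notation.

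In short, your write-up is an honest research outline rather than a proof, and it matches the paper's status: the statement remains a conjecture.
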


\begin{remark}
Since drawing nonintersecting chords between labeled points on a circle is equivalent to determining a noncrossing involution, we get that the number of noncrossing involutions in $S_n$ is equal to the $n$th Motzkin number. 
However, 
Proposition~\ref{prop:noncrossing w is good iff w is nested} shows 
that   
some noncrossing involutions are good and some noncrossing involutions are not, 
so the set of good involutions is not equal to the set of noncrossing involutions. 

It is also known that the number of nonnesting involutions in $S_n$ is equal to the $n$th Motzkin number.   
Proposition~\ref{prop:noncrossing w is good iff w is nested}
illustrates that the set of good involutions is not equal to the set of nonnesting involutions.
\end{remark}

\section*{Acknowledgments}
This work was supported by the Summer Undergraduate Math Research at Yale (SUMRY 2021) program and the NSF (REU Site grant DMS-2050398). 
We thank Sergi Elizalde and Joel Lewis for helpful discussions, 
and 
Su Ji Hong,
Matthew Li,
Raina Okonogi-Neth, 
Mykola Sapronov, 
Dash Stevanovich, and
Hailey Weingord
for useful input and feedback during SUMRY 2022. 
We are grateful to Darij Grinberg and to the anonymous reviewer whose suggestions helped improve and clarify this paper. 
This research also benefited from the open-source software {\sc SageMath}~\cite{sagemath}.


\end{document}